\tikzstyle arrowstyle=[scale=1]
\tikzstyle directed=[postaction={decorate,decoration={markings,
    mark=at position .65 with {\arrow[arrowstyle]{stealth}}}}]
\tikzstyle reverse directed=[postaction={decorate,decoration={markings,
    mark=at position .65 with {\arrowreversed[arrowstyle]{stealth};}}}]
\tikzset{cross/.style={cross out, draw=black, minimum
    size=2*(#1-\pgflinewidth),
    inner sep=0pt, outer sep=0pt},  cross/.default={1pt}}
\newcommand{\cA}{{\mathscr A}}
\newcommand{\GG}{\mathbb{G}}
\newcommand{\cD}{{\mathcal D}}
\newcommand{\cW}{{\mathcal W}}
\newcommand{\cC}{{\mathscr C}}
\newcommand{\cF}{{\mathscr F}}
\newcommand{\cK}{{\mathcal K}}
\newcommand{\hcK}{{\widehat{\mathcal K}}}
\newcommand{\cont}{\mathsf{c}}
\newcommand{\cV}{{\mathcal V}}
\newcommand{\bM}{{\overline{M}}}
\newcommand{\Mbar}{{\overline{M}}}
\newcommand{\direct}{\mathsf{dir}}
\newcommand{\tw}{{\mathsf{tw}}}
\newcommand{\sgn}{{\mathsf{sgn}}}
\newcommand{\usgn}{{\underline{\sgn}}}
\newcommand{\abs}{{\mathsf{abs}}}
\newcommand{\MC}{{\mathsf{MC}}}
\newcommand{\pt}{{\mathsf{pt}}}
\newcommand{\res}{{\mathsf{res}}}
\newcommand{\hres}{{\mathsf{hres}}}
\newcommand{\bbK}{\mathbb{K}}
\newcommand{\bbC}{\mathbb{C}}
\newcommand{\bbZ}{\mathbb{Z}}
\newcommand{\bbQ}{\mathbb{Q}}
\newcommand{\triv}{{\mathsf{triv}}}
\newcommand{\Tate}{\mathsf{Tate}}
\newcommand{\pairing}{\langle\,-\,,\,-\,\rangle}
\newcommand{\Muk}{{\sf Muk}}
\newcommand{\fr}{{\sf fr}}
\newcommand{\hS}{{\sf hS}}
\newcommand{\comb}{{\sf comb}}
\newcommand{\g}{{\mathfrak g}}
\newcommand{\h}{{\mathfrak h}}
\newcommand{\hg}{\widehat{\mathfrak g}}
\newcommand{\hh}{\widehat{\mathfrak h}}
\newcommand{\hcV}{\widehat{\mathcal V}}
\newcommand{\hbA}{\widehat{\beta}^A}
\newcommand{\betaA}{\beta^A}
\newcommand{\wSym}{\widehat{\Sym}_\hbar}
\newcommand{\sym}{{\sf sym}}
\newcommand{\ra}{\rightarrow}
\newcommand{\lra}{\longrightarrow}
\newcommand{\iso}{\cong}
\newcommand{\pj}{\mathbb{P}}
\newcommand{\del}{\partial}
\newcommand{\biota}{\bar{\iota}}
\DeclareMathOperator{\Sym}{Sym}
\DeclareMathOperator{\Fuk}{Fuk}
\DeclareMathOperator{\Aut}{Aut}
\DeclareMathOperator{\wt}{wt}
\DeclareMathOperator{\Hom}{Hom}
\DeclareMathOperator{\End}{End}
\DeclareMathOperator{\imag}{Im}
\DeclareMathOperator{\id}{id}
\DeclareMathOperator{\ev}{ev}
\renewcommand{\phi}{\varphi}
\newsavebox{\sembox}
\newlength{\semwidth}
\newlength{\boxwidth}
\newcommand{\series}[1]{%
\sbox{\sembox}{\ensuremath{#1}}%
\settowidth{\semwidth}{\usebox{\sembox}}%
\sbox{\sembox}{\ensuremath{\left[\usebox{\sembox}\right]}}%
\settowidth{\boxwidth}{\usebox{\sembox}}%
\addtolength{\boxwidth}{-\semwidth}%
\!\left[\hspace{-0.3\boxwidth}%
\usebox{\sembox}%
\hspace{-0.3\boxwidth}\right]\!\hspace{.06667em}%
}
\newsavebox{\lbox}
\newlength{\lwidth}
\newlength{\lboxwidth}
\newcommand{\laurent}[1]{%
\sbox{\lbox}{\ensuremath{#1}}%
\settowidth{\lwidth}{\usebox{\lbox}}%
\sbox{\lbox}{\ensuremath{\left(\usebox{\lbox}\right)}}%
\settowidth{\lboxwidth}{\usebox{\lbox}}%
\addtolength{\lboxwidth}{-\lwidth}%
\!\left(\hspace{-0.3\lboxwidth}%
\usebox{\lbox}%
\hspace{-0.3\lboxwidth}\right)\!\hspace{.06667em}%
}
\newtheorem{Theorem}{Theorem}[section]
\newtheorem{Conjecture}{Conjecture}[section]
\newtheorem{Lemma}{Lemma}[section]
\newtheorem{Proposition}{Proposition}[section]
\theoremstyle{definition}
\newtheorem{Definition}{Definition}[section]
\theoremstyle{remark}
\newtheorem{Remark}{Remark}[section]
\let\c@Lemma\c@Theorem
\let\c@Conjecture\c@Theorem
\let\c@Proposition\c@Theorem
\let\c@Definition\c@Theorem
\let\c@Remark\c@Theorem
\begin{document}

\title{Effective Categorical Enumerative Invariants}

\author[C\u ald\u araru and Tu]{%
Andrei C\u ald\u araru\;\; Junwu Tu}

\address{Andrei C\u ald\u araru, Mathematics Department,
University of Wisconsin--Madison, 480 Lincoln Drive, Madison, WI
53706--1388, USA, \texttt{andreic\@math.wisc.edu}.\newline
Junwu Tu, Institute of Mathematical Sciences, ShanghaiTech University,
Shanghai, 201210, China, \texttt{tujw\@shanghaitech.edu.cn}.} 
\begin{abstract}
  {\footnotesize
    We introduce enumerative invariants $F_{g,n}$ $(g\geq0$, $n \geq 1)$
    associated to a cyclic $A_\infty$ algebra and a splitting of its
    non-commutative Hodge filtration. These invariants are defined by
    explicitly computable Feynman sums, and encode the same
    information as Costello’s partition function of the corresponding
    field theory.

    Our invariants are stable under Morita equivalence, and therefore
    can be associated to a Calabi-Yau category with splitting data.  This
    justifies the name categorical enumerative invariants (CEI) that we
    use for them. 

    CEI conjecturally generalize all known enumerative invariants
    in symplectic geometry, complex geometry, and singularity theory.
    They also provide a framework for stating enumerative mirror
    symmetry predictions in arbitrary genus, whenever homological
    mirror symmetry holds.
  }
\end{abstract}

\maketitle
{\footnotesize \tableofcontents}

\section{Introduction}

The purpose of this paper is to introduce enumerative-type invariants
associated to a pair $(A,s)$ consisting of a smooth cyclic
$A_\infty$-algebra $A$ and a splitting $s$ of the non-commutative
Hodge filtration on the cyclic homology of $A$.  These invariants are
stable under Morita equivalence~\cite{AmoTu} and therefore the
definition can be used for any smooth and proper Calabi-Yau category
with a splitting of the Hodge filtration.  This justifies the name
{\em categorical enumerative invariants} (CEI) we use for these
invariants.

Following work of Costello~\cite{Cos1} the algebra $A$ determines an
open topological-conformal field theory (open TCFT) which extends in a
universal way to an open-closed TCFT.  The state space of the closed
sector of this open-closed TCFT is the Hochschild chain complex
$C_*(A)$ of $A$, with circle action encoded by the Connes differential
$B$.  The splitting $s$ gives a choice of trivialization of this
circle action.

Our CEI encode the same information as the partition function that
Costello associates to the above data~\cite{Cos2}.  The main difference
is that our invariants are effectively computable by explicit
Feynman-type sums that combine expressions in the closed TCFT and the
trivialization of the circle action.  See Section~\ref{sec:formulas} for
explicit formulas in low genera.

\subsection{Descendent Gromov-Witten invariants}

In order to explain what type of objects CEI are it will be helpful
to review the classical construction of descendent Gromov-Witten
invariants associated to a compact symplectic manifold $X$.

The state space of Gromov-Witten theory on $X$ is the graded vector
space $H=H^*(X,\Lambda)$, the singular cohomology of $X$ with
coefficients in an appropriate Novikov field $\Lambda$.  Gromov-Witten
theory defines descendent invariants of the form
\[ \langle \tau_{k_1}(\gamma_1),\ldots,\tau_{k_n}(\gamma_n)\rangle_g^X
  \in \Lambda\]
for any genus $g \geq 0$, number of insertions $n\geq 0$, insertions
$\gamma_1,\ldots,\gamma_n\in H$, and integers $k_1,\ldots,k_n\geq 0$
counting $\psi$-classes.

Let $u$ be a formal variable of homological degree $-2$, which will be
used to keep track of $\psi$-class insertions, and denote by $H_-$,
$H_+$, and $H_\Tate$ the graded vector spaces
\begin{align*}
  H_- & = H[u^{-1}], \\
  H_+ & = H\series{u}, \\
  H_\Tate & = H\laurent{u}. 
\end{align*}
The space $H_+$ is naturally the linear dual of $H_-$ by restriction
of the residue pairing, see~(\ref{subsec:circle})
\[ \langle\,-\,,\,-\,\rangle_\res : H_\Tate \otimes H_\Tate \ra \Lambda. \]

For fixed $(g,n)$ the descendent invariants above are packaged into a
single invariant $F_{g,n}^X \in \Sym^n(H_-)$ defined by the
requirement that
\[ \langle F_{g,n}^X, (\gamma_1 u^{k_1})\cdots(\gamma_n u^{k_n})
  \rangle_\res = \langle\tau_{k_1}(\gamma_1),\ldots,
  \tau_{k_n}(\gamma_n)\rangle_g^X \] 
for $\gamma_1, \ldots, \gamma_n\in H$, $k_1, \ldots, k_n\geq 0$.
(Finiteness properties of descendent invariants ensure
that $F_{g,n}^X$ is well-defined.)

For each $(g,n)$ the invariant $F_{g,n}^X$ can be determined by a
finite computation.  For this reason we choose these invariants as the
primary objects that we will generalize.  However, for many
applications it is useful to further package the collection of all
invariants $F_{g,n}^X$ into a single formal function $\cD^X$, called
the full descendent potential.  All descendent invariants of $X$ can
be recovered from $\cD^X$.  See Lemma~\ref{lem:mc-closed} for an
explicit formula for $\cD^X$ in terms of the invariants $F_{g,n}^X$.

\begin{Remark}
  The reader unfamiliar with Gromov-Witten theory might think that we
  introduce the above structures for purely formal reasons, in order
  to handle the complicated nature of the invariants $F_{g,n}^X$.
  There are deeper reasons to study these constructions, as observed
  by Givental~\cite{Giv}.  The above structures are fundamental in
  Gromov-Witten theory, as well as in the non-commutative context.
  Especially in the latter situation these structures appear from the
  very beginning: $u$ is the circle parameter associated to the Connes
  operator $B$ on the Hochschild complex, $H_-$ and $H_+$ are the
  cyclic and negative cyclic homology complexes of $A$, respectively,
  and the residue pairing is a generalization of the Mukai
  pairing~\cite{CalWil},~\cite{Shk}.
\end{Remark}

\subsection{Categorical enumerative invariants}
\label{cond:dagger}
Let $\bbK$ be a field of characteristic zero, and let $A$ be a
$\Z/2\Z$-graded cyclic $A_\infty$-algebra over $\bbK$.  We assume that
$A$ satisfies the conditions below:
\[
  \begin{array}{cl}
    (\dagger) \mbox{\quad\quad\quad\quad}&
                \begin{array}{l} A \mbox{ is smooth, finite
                  dimensional, unital, and satisfies the} \\
                  \mbox{Hodge-de Rham degeneration property}
                \end{array}
\end{array}
\] 
(By~\cite{Kal} Hodge-de Rham degeneration is automatic
if $A$ is $\Z$-graded.)

Let $d$ be the Calabi-Yau dimension of $A$, and denote by $H$ the
graded vector space $H=HH_*(A)[d]$.  This is the state space of the
closed TCFT associated to $A$ in~\cite{Cos1}.  Define $H_-$, $H_+$,
$H_\Tate$ as before, and define the residue pairing using the Mukai
pairing on $H$ as in~(\ref{subsec:circle}).

Let $s$ be a choice of splitting of the non-commutative Hodge
filtration of $A$ (Definition~\ref{def:splitting}).  We will
construct categorical enumerative invariants (CEI)
\[ F_{g,n}^{A,s} \in \Sym^n(H_-) \]
indexed by pairs of integers $(g,n)$ satisfying $g\geq 0$, $n\geq 1$,
$2g-2+n>0$.  These CEI are defined by explicit Feynman sums over a new
type of graphs, called partially directed stable graphs
(Definition~\ref{defi:dir-graph}).  The CEI $F_{g,n}^{A,s}$ are expected
to play in the non-commutative setting a role similar to that of the
invariants $F_{g,n}^X$ in Gromov-Witten theory.  In particular, basic
CEI invariants with insertions $\gamma_1, \ldots, \gamma_n \in H$ can
be defined from $F_{g,n}^{A,s}$ by
\[ \langle
  \tau_{k_1}(\gamma_1),\ldots,\tau_{k_n}(\gamma_n)\rangle_g^{A,s} = 
  \langle F_{g,n}^{A,s}, (\gamma_1 u^{i_1})\cdots(\gamma_n u^{i_n})
  \rangle_\res.\] 

Despite the apparent similarity with notation for Gromov-Witten (GW)
or Fan-Jarvis-Ruan-Witten (FJRW) invariants, the basic numerical
invariants that are extracted from CEI are complex, not rational
numbers. The more classical enumerative invariants are conjecturally
the Taylor coefficients of the function obtained by computing the
above basic CEI in a versal family of deformations of the input data
$(A,s)$.

\begin{Remark}
Costello~\cite{Cos2} and Kontsevich-Soibelman~\cite{KonSoi}
were the first to observe that the proper input for the construction
of categorical enumerative invariants should include not just the
algebra $A$ but also a choice of splitting of the non-commutative
Hodge filtration. As explained above, this is a choice of
trivialization of the circle action given by the Connes operator. From
the operadic point of view, the importance of such a trivialization
was explored in~\cite{Dru,DSV,DSV2,KMS,Des}.

Our construction of CEI is heavily influenced by work of
Costello~\cite{Cos2}.  The invariants we define contain the same
information as Costello's partition function associated to the TCFT
corresponding to $A$, with circle action trivialized by $s$ (in our
notation, the total descendent potential $\cD^{A,s}$,
Definition~\ref{def:main}).  Our main contribution is to give explicit
formulas for the individual components $F_{g,n}^{A,s}$ of $\cD^{A,s}$.
\end{Remark}

Two new ideas enable the definition of CEI in an effectively
computable manner:
\begin{itemize}
\item[(1)] Costello's original construction uses geometric, symmetric
  string vertices $\cV_{g,n}\in C_*(M_{g,0,n})$ which have no inputs.
  Because of this his string vertices can not be represented by
  ribbon graphs, and do not act directly on a positive boundary TCFT.
  We replace them by a Koszul resolution
  \[ \hcV_{g,n}\in \bigoplus_{k\geq 1} C_*(M_{g,k,n-k}) \]
  which does admit a combinatorial representation by ribbon graphs. 

\item[(2)] We prove a constructive formality result for the
  differential graded Lie algebras that appear in the theory of
  Batalin-Vilkovisky algebras with trivialized circle action.  The
  operations in the resulting trivialized algebra are expressed as sums
  over partially directed graphs.  This gives explicit formulas for
  the final step of Costello's construction, which was previously
  handled by a non-constructive argument.
\end{itemize}

Idea (1) is originally due to Costello, and it was made precise
in~\cite{CCT}.  Idea (2) appeared in a slightly different form
in~\cite{CalTu2}; however, the definition of PD graphs and the
arguments involved in the homological perturbation argument were
incomplete and are corrected in the current version.

{\bf The main theorem}\ 
In order to state our main theorem we need to introduce
an embedding
\[ \biota: \Sym^n H_- \ra \Hom ( H_+, \Sym^{n-1} H_- ).\]
One can think of as an analogue of the Koszul or de Rham
differentials, see~(\ref{subsec:Koszul}).

Our main result provides the following explicitly computable formula
for $\biota (F_{g,n}^{A,s})$.  It determines $F_{g,n}^{A,s}$ because
$\biota$ is injective.

\begin{Theorem}
  \label{thm:intro}
  Let $A$ be a cyclic $A_\infty$ algebra satisfying $(\dagger)$, and let
  $s$ be a splitting of the non-commutative Hodge filtration of $A$ in
  the sense of Definition~\ref{def:splitting}.  Then the genus $g$,
  $n$-point CEI of the pair $(A,s)$ is determined by
  \[\biota (F_{g,n}^{A,s}) = \sum_{m\geq 1} \sum_{\GG\in
      \Gamma(\!(g,1,n-1)\!)_m} (-1)^{m-1} \frac{\wt(\GG)}{\Aut(\GG)} \prod_{v}
    {\sf Cont} (v) \prod_{e} {\sf Cont} (e) \prod_{l} {\sf Cont} (l),\]
  where
  \begin{enumerate}
  \item The second summation is over partially directed stable graphs
    (Definition~\ref{defi:dir-graph}) of genus $g$, with $1$ input leg,
    $n-1$ output legs, and $m$ vertices.
  \item The weight $\wt(\GG)\in \mathbb{Q}$ is a rational number
    associated to $\GG$, see~(\ref{subsec:weight}).
  \item The contributions at vertices ${\sf Cont}(v)$ come from
    combinatorial string vertices (Definition~\ref{defi:vertices}) and
    depend only on $A$, while the edge and leg contributions are given
    by explicit formulas depending on the splitting map $s$.
  \end{enumerate}
\end{Theorem}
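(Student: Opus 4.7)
The plan is to identify $F_{g,n}^{A,s}$ with the $(g,n)$-component of a Maurer--Cartan element whose iterated expansion reproduces the stated graph sum on the right-hand side, by combining Ideas (1) and (2) from the introduction.

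Step 1 (combinatorial vertices). First I would recall that $F_{g,n}^{A,s}$ is the component of Costello's total descendent potential $\cD^{A,s}$ landing in $\Sym^n(H_-)$. Using Idea (1), Costello's construction is reformulated with the Koszul-resolved combinatorial string vertices $\hcV_{g,n}\in\bigoplus_{k\geq1}C_*(M_{g,k,n-k})$ in place of the input-free vertices $\cV_{g,n}$, as in~\cite{CCT} and in the earlier sections of the paper. Because $\hcV_{g,n}$ has at least one input boundary, it admits ribbon-graph incarnations, and pairing these against the cyclic $A_\infty$ structure of $A$ produces the combinatorial string vertices of Definition~\ref{defi:vertices}. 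These will furnish the vertex contributions ${\sf Cont}(v)$ in the Feynman sum, and the rational coefficient with which each ribbon graph appears in the chosen chain-level representative of $\hcV_{g,n}$ will become the weight $\wt(\GG)$.

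Step 2 (homological transfer using $s$). Next I would invoke Idea (2) in the form of a constructive formality statement. The Hodge--de Rham degeneration assumption in $(\dagger)$ produces a cochain-level quasi-isomorphism between $C_*(A)$ and $H$, while the splitting $s$ trivializes the Connes operator $B$, which unpacks to a concrete contracting data for the circle action. Feeding these into the homological perturbation lemma for the dg Lie algebra controlling Costello's TCFT structure transfers it to a homotopy-equivalent $\infty$-structure on $H$. In the standard shape of the HPL formula the transferred operations are sums over trees whose edges are labeled by the propagator built from the chosen contracting homotopy. The crucial technical point is that this propagator decomposes into a symmetric piece (coming from the residue pairing on $H_\Tate$) and a non-symmetric piece (coming from iterated applications of $s$); the former is recorded by undirected edges, the latter by directed edges, and together they land the sum in the set of partially directed stable graphs of Definition~\ref{defi:dir-graph}. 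This is what yields the contributions ${\sf Cont}(e)$ and ${\sf Cont}(l)$.

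Step 3 (assembly and $\biota$). Combining Steps 1 and 2 expresses $\cD^{A,s}$ as a Feynman sum over PD stable graphs. The sign $(-1)^{m-1}$ and the symmetry factor $1/\Aut(\GG)$ are the standard combinatorial artifacts of passing from a group-like exponential to its logarithm in the Maurer--Cartan calculus, with $m$ the number of vertices of $\GG$. Extracting the $(g,n)$-component gives $F_{g,n}^{A,s}$, and applying the injection $\biota$ selects one distinguished input leg valued in $H_+$ and leaves $n-1$ output legs valued in $H_-$, pinning down the summation range as PD graphs of type $(\!(g,1,n-1)\!)_m$.

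The principal obstacle, signalled in the introduction by the correction to~\cite{CalTu2}, is in Step 2: the constructive formality statement must be set up so that the PD graph combinatorics reproduce the transferred operations exactly, with the correct signs and orientations arising from the coexistence of the symmetric residue propagator and the asymmetric splitting $s$. Once the category of PD graphs and the ambient dg Lie model are calibrated to match, Steps 1 and 3 reduce to bookkeeping with the homological perturbation and logarithm formulas.
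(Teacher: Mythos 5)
Your proposal follows the announced two-step strategy (combinatorial string vertices plus a constructive trivialization depending on $s$), which is indeed the paper's architecture, but several of the mechanisms you describe are not the ones that make the formula come out, and as stated they would not reproduce the right-hand side. First, you attribute the weight $\wt(\GG)$ to ``the rational coefficient with which each ribbon graph appears in the chosen chain-level representative of $\hcV_{g,n}$.'' In the paper those coefficients are absorbed into the vertex contributions ${\sf Cont}(v)=\hbA_{g(v),k(v),l(v)}$ themselves; the weight $\wt(\GG)$ is a separate rational number attached to the partially directed graph, defined by the inductive contraction formula of~(\ref{subsec:weight}), and it arises from integrating the pseudo-isotopy of DGLAs on $\hh_A\otimes\Omega^\bullet_{[0,1]}$ following Fukaya (the proof of Theorem~\ref{thm:Khat-delta} consists precisely in checking that the graph sum with these weights satisfies Fukaya's differential equation). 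Second, the directed/undirected dichotomy does not come from splitting one propagator into a symmetric and a non-symmetric part: directed edges record compositions coming from the Lie bracket of $\hh_A$ (input-to-output sewing, twisted by the circle operator $\Theta$), while undirected edges record the BV operator $\Delta$ (output-to-output sewing, trivialized by $H^\sym$); both families of edge labels are built from $S$ and $R=S^{-1}$.

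Third, and most seriously, your graphs are missing the spanning tree $T$, which is part of the data of a partially directed graph (Definition~\ref{defi:dir-graph}) and is what decides whether a given edge carries the bounding homotopy ($F$ on directed tree edges, $\delta$ on undirected tree edges) or the ``closed'' operator ($\Theta$, respectively $H^\sym$, off the tree); the weight is likewise defined by contracting tree edges. Without the spanning tree neither ${\sf Cont}(e)$ nor $\wt(\GG)$ is well defined, so your Step 2 cannot be completed as a generic homological perturbation argument: one has to construct the specific $L_\infty$ quasi-isomorphisms $\cK:\h_A\to\h_A^\triv$ and $\hcK:\hh_A\to\hh_A^\triv$ of Sections~\ref{sec:trivial-1} and~\ref{sec:trivial-2}, establish the homotopy-commutative square relating them through $\biota$, and then read off $\biota_*F^{A,s}=\widehat{R}_*\hcK_*\hbA$ from the induced diagram of Maurer--Cartan moduli spaces. (Also, the sign $(-1)^{m-1}$ is not a logarithm artifact; it comes from the odd parity of the elements $\hbA_{g,k,l}$ interacting with the sign in the definition of $\{-,-\}^F_\hbar$.)
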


We refer the reader to Theorem~\ref{thm:main} for more details on the
above formula.

\begin{Remark}
CEI, as defined in this paper, depend on three pieces of data:
\begin{enumerate}
\item[--] an $A_\infty$ algebra $A$;
\item[--] a cyclic structure on $A$;
\item[--] a splitting $s$ of the non-commutative Hodge filtration of
$A$.
\end{enumerate}
The cyclic structure is in some sense too strict: it does not
pull-back via quasi-isomorphisms. This makes proving Morita invariance
of CEI not obvious.  Amorim-Tu~\cite{AmoTu2} show that CEI only depend
on the underlying Calabi-Yau structure induced by the cyclic
structure, from which the full Morita invariance of CEI can be
proved. Thus we may replace $A$ by any Calabi-Yau category $\cC$
Morita equivalent to it.
\end{Remark}

{\bf Related constructions}\ \, A different approach to obtain
invariants from Calabi-Yau categories was proposed by
Barannikov-Kontsevich~\cite{Bar} using the idea of variation of
semi-infinite Hodge structures (VSHS). Their work was extending
pioneering works of Saito~\cite{Sai1},~\cite{Sai2} on unfoldings of
singularities. This approach was further advanced by
Ganatra-Perutz-Sheridan~\cite{GPS} to prove genus zero Hodge-theoretic
mirror symmetry from homological mirror symmetry.  Unlike the VSHS
approach, which can only recover the genus zero part of the theory, our
approach accesses all genera.  See~\cite[Conjecture 1.8]{CLT} for a
precise comparison conjecture between the two approaches.

\subsection{Comparisons to existing theories}
\label{subsec:compa}

The primary motivation for studying CEI theory goes back to the
origin of the homological mirror symmetry conjecture.  Kontsevich, in
his visionary 1994 address to the International Congress of
Mathematicians~\cite{Kon}, predicted that Gromov-Witten theory should
be of a categorical nature.  More precisely, he proposed that it
should be possible to extract the Gromov-Witten invariants of a
compact symplectic manifold $X$ directly from its Fukaya category
$\Fuk(X)$.  This is one of the goals that CEI theory aims to
accomplish.

A second goal of CEI theory is to attempt to unify virtually all known
enumerative invariants in the literature, by taking the algebra $A$ to
be Morita equivalent to
\begin{itemize}
\item[(A)] the Fukaya category, for Gromov-Witten theory;
\item[(B)] the wrapped Fukaya category, for FJRW theory;
\item[(C)] the category of matrix factorizations, for Givental-Saito
  theory.
\end{itemize}
See below for cases (D) equivariant Givental-Saito theory and (E)
BCOV theory.

Yet a third motivation to develop CEI theory is that it provides a
potential path for proving enumerative mirror symmetry at all genera.
Enumerative mirror symmetry for CEI follows tautologically from
homological mirror symmetry.  This was Kontsevich's original intended
explanation for the equality of numerical invariants on the two sides
of mirror symmetry observed by Candelas-de la
Ossa-Green-Parkes~\cite{COGP}.  However, despite the existence of this
tautological result, we cannot yet claim that homological mirror
symmetry implies enumerative mirror symmetry at all genera: the main
difficulty lies in identifying the categorical invariants with the
geometric ones. Thus it is important to study comparison results
between CEI and existing theories.

For example, in the case of the comparison in (A) it is expected that
under the identification
\[ \mathcal{OC}: HH_*\left (\Fuk(X)\right)[-d] \cong H^*(X,\Lambda) \]
the descendent CEI potential agrees with the corresponding potential
in Gromov-Witten theory,
\[ \cD^{\Fuk(X),\,s^{\mathcal{OC}}} = \cD^X.\]
(The full descendent potential $\cD^{A,s}$ of CEI theory is defined
like the full descendent potential of Gromov-Witten theory, see
Lemma~\ref{lem:mc-closed}.)  Here $\mathcal{OC}$ denotes the
open-closed map in symplectic geometry, and the splitting
$s^{\mathcal{OC}}$ is defined using a cyclic extension of
$\mathcal{OC}$~(\cite{Gan,Hug}). We refer the reader to~\cite{AmoTu2}
for a definition of $s^{\mathcal{OC}}$ and a precise statement of the
above comparison with Gromov-Witten theory.

{\bf Equivariant Givental-Saito theory}\ \, In the categorical
approach all the above theories can be directly generalized to the
setting where a finite group of symmetries is present: the input
category is simply replaced by the corresponding smash product
construction. In particular, this allows us to define orbifold
Givental-Saito theory at all genera, for which no direct definition
exists. Explicitly, we take $A$ to be Morita equivalent to

\begin{itemize}
\item[(D)] the category of $G$-equivariant matrix factorizations, for
orbifold Givental-Saito theory.
\end{itemize}

This yields new $B$-model invariants associated with Landau-Ginzburg
orbifolds for an arbitrary finite symmetry group $G$. In particular,
it enables us to formulate the LG/LG mirror symmetry conjecture for
arbitrary finite symmetry groups between mirror dual invertible
polynomials.

\begin{Conjecture}
  Let $(W,G)$ and $(W^t,G^t)$ be mirror dual pairs of Landau-Ginzburg
  models~\cite{Kra}. Then we conjecture that there is an equality of
  descendent potentials
  \[ \cD^{{\sf MF}_G(W),\,s^{\sf can}} = \cD^{(W^t,G^t)}_{\sf FJRW},\]
  under the state space identification of
  Chiodo-Ruan~\cite{ChiRua}. Here the splitting $s^{\sf can}$,
  originally introduced in~\cite{HLSW} in the geometric setting, is the
  unique $G_W^{\sf max}$-equivariant splitting of the category ${\sf
    MF}_G(W)$ in~\cite{Tu}.
\end{Conjecture}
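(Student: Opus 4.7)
The plan is to decompose the conjecture into three largely independent pieces: first, a homological mirror symmetry statement identifying the $A$-side categorical input; second, a matching of splittings of the non-commutative Hodge filtration on both sides; and third, a comparison theorem between CEI and the geometric FJRW invariants. Because CEI are Morita invariant by~\cite{AmoTu}, if a Morita equivalence between ${\sf MF}_G(W)$ and the relevant Fukaya-type Calabi-Yau category on the mirror side can be constructed compatibly with cyclic structures, the left-hand side of the conjectured equality can be transported to a CEI computation on the $A$-side, reducing the conjecture to a CEI-vs-geometric comparison of the same type as case (B) in Section~\ref{subsec:compa}.

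First I would establish LG/LG homological mirror symmetry in the form of a Morita equivalence
\[ {\sf MF}_G(W) \simeq \cC(W^t, G^t), \]
where $\cC(W^t,G^t)$ denotes the wrapped Fukaya-type Calabi-Yau category whose CEI should compute FJRW theory for $(W^t,G^t)$. Versions of such an equivalence are known for several families of invertible polynomials, and the remaining task is to verify that the cyclic Calabi-Yau structures match and that hypothesis $(\dagger)$ is satisfied on both sides. The Chiodo-Ruan isomorphism should then emerge as the induced map on Hochschild homology after the shift by $d$.

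Second, I would match splittings. By~\cite{Tu} the splitting $s^{\sf can}$ is characterized as the unique $G_W^{\sf max}$-equivariant splitting of ${\sf MF}_G(W)$. One therefore needs to produce a natural splitting $s^{\mathcal{OC}}$ on $\cC(W^t,G^t)$, constructed from a cyclic lift of the open-closed map in analogy with case (A) of Section~\ref{subsec:compa}, verify that it transports to a $G_W^{\sf max}$-equivariant splitting across the HMS equivalence, and then invoke uniqueness to conclude $s^{\sf can}$ and $s^{\mathcal{OC}}$ agree under the HMS identification.

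Third, and hardest, one must establish the geometric comparison
\[ \cD^{\cC(W^t,G^t),\,s^{\mathcal{OC}}} = \cD^{(W^t,G^t)}_{\sf FJRW}, \]
which is the FJRW analogue of the conjectural Gromov-Witten comparison $\cD^{\Fuk(X),\,s^{\mathcal{OC}}} = \cD^X$ discussed in Section~\ref{subsec:compa}. This is where essentially all the analytic and geometric difficulty sits: one must identify the explicit Feynman sum over partially directed stable graphs of Theorem~\ref{thm:intro} with virtual integrals over the moduli of FJRW sections in arbitrary genus. The main obstacle, as the authors already emphasize, is precisely this matching of categorical and geometric definitions, and at present no framework for carrying it out in all genera exists beyond the Gromov-Witten analogue. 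Any proof will therefore require, at minimum, constructing a cyclic open-closed TCFT morphism on the wrapped side whose effect on partition functions can be compared term-by-term with the combinatorial string-vertex contributions ${\sf Cont}(v)$, ${\sf Cont}(e)$, ${\sf Cont}(l)$ appearing in Theorem~\ref{thm:intro}, and then tracking equivariance under $G$ throughout the construction.
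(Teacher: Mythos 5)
The statement you are addressing is stated in the paper as a \emph{Conjecture}, and the paper offers no proof of it; the only evidence cited is that He--Li--Shen--Webb established the geometric analogue for the trivial group $G=\{\id\}$. Your proposal is therefore not being measured against a proof in the paper, and, more importantly, it is not itself a proof: it is a decomposition of the conjecture into three sub-problems, each of which is currently open. Step one (an LG/LG homological mirror symmetry equivalence ${\sf MF}_G(W)\simeq \cC(W^t,G^t)$ compatible with cyclic Calabi--Yau structures and condition $(\dagger)$, inducing the Chiodo--Ruan identification on Hochschild homology) is known only in scattered cases and not with the cyclic compatibility you need. Step two silently requires showing that the open-closed splitting transports to a $G_W^{\sf max}$-equivariant one before the uniqueness result of~\cite{Tu} can be invoked; that equivariance is not automatic and is itself a substantive claim. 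Step three you correctly identify as the heart of the matter, and you concede that no framework exists for it in arbitrary genus. A plan whose every component is an open problem, with the hardest component explicitly acknowledged as unavailable, does not constitute a proof; at best it is a research program, and indeed it is essentially the program the authors themselves sketch in Section~\ref{subsec:compa} (reduce enumerative mirror symmetry to HMS plus a categorical-versus-geometric comparison).

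That said, as a \emph{strategy} your decomposition is the natural one and is consistent with the paper's framework: transporting the left-hand side across HMS by Morita invariance and then comparing with geometric FJRW theory is exactly the route suggested by cases (B) and (D) of Section~\ref{subsec:compa}. If you wish to make partial progress rather than restate the program, the concrete entry point is the case already settled geometrically: take $G=\{\id\}$, where~\cite{HLSW} provides the geometric statement, and try to prove the categorical-versus-geometric comparison $\cD^{{\sf MF}(W),\,s^{\sf can}}=\cD^{(W,\,G_W^{\sf max})}_{\sf Saito}$ in low genus using the explicit Feynman sums of Theorem~\ref{thm:main}, in the spirit of the genus-zero comparison of~\cite{CLT}. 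That would at least convert one of your three open steps into a theorem in a special case.
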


This conjecture was proved by He-Li-Shen-Webb~\cite{HLSW} in the
geometric setting, but only for the trivial orbifold group $G=\{\id\}$.

{\bf BCOV invariants}\ \, Last, but not least, we may also take $A$
to be Morita equivalent to
\begin{itemize}
\item[(E)] the derived categories of the fibers of a smooth projective
  family of Calabi-Yau varieties, for BCOV invariants~\cite{BCOV}.
\end{itemize}
This is arguably the most interesting application of CEI theory, as it
allows one to precisely define BCOV invariants in mathematics.
(The importance of BCOV invariants in physics comes from their
interpretation as partition functions in topological string
theory~\cite{BCOV}.)  This idea was already explored by
Costello~\cite[Section 13]{Cos2}.  Note that there is also an
alternative, geometric/analytic approach to the construction of BCOV
invariants developed by Costello-Li~\cite{CosLi} and~\cite{Li}.

We use the construction of CEI to make the definition of these
invariants completely transparent. Conjecturally, this gives the first
mathematical construction of the BCOV partition function~\cite{BCOV}
for general smooth and proper Calabi-Yau varieties.

More precisely, for a smooth and proper Calabi-Yau variety $X$ over
$\bbC$ consider the differential graded (dg) category 
\[ \mbox{$\cC_X$} = D^b_{dg}\big({\sf Coh}(X)\big). \] Based on
To\"en's proposal~\cite{Toe}, Blanc~\cite{Bla} developed a version of
complex topological $K$-theory for $\mathbb{C}$-linear differential
graded categories.  Assuming~\cite[Conjecture 1.14 (2)]{GPS}, an
immediate consequence of Blanc's construction is that the category
$\cC_X$ admits an intrinsic splitting of the non-commutative Hodge
filtration.  Under the comparison result~\cite[Theorem 1.1 (b),
(d)]{Bla} this splitting then corresponds to the complex
conjugate splitting of the classical Hodge filtration of
$H^*(X,\mathbb{C})$ (see~\cite[Section 4.6]{Bla}).  We shall denote
this splitting by $s^\mathsf{BT}$ for ``Blanc-To\"en''.

CEI theory in this case yields complex number invariants
\[ \langle \tau_{k_1}(\gamma_1), \ldots,
  \tau_{k_n}(\gamma_n)\rangle_g^{\cC_X,\,s^{\sf BT}} \in \mathbb{C} \]
for $g\geq 0$, $n\geq 1$, $2g-2-n>0$, $k_1,\ldots,k_n\geq 0$, and 
insertions $\gamma_1,\ldots,\gamma_n \in HH_*(X)[d]$.  We
expect that the functions on the moduli space of Calabi-Yau varieties,
obtained from the above numbers by varying $X$ in a versal family,
encode the BCOV invariants from physics~\cite{BCOV}. 

\subsection{A look ahead}

Despite the length of the paper, the main construction can be
understood as a wide-ranging algebraic generalization of a fairly
simple geometric construction which we describe in the next section.
The content of that section is key (as motivation) to understanding
the paper, even though the rest of the paper is independent of it.  A
more detailed guide to the remainder of the paper is provided
in~(\ref{subsec:plan}). 

\subsection{Acknowledgments}
We thank Kevin Costello, Nick Sheridan, Si Li, and Dima Arinkin for
patiently listening to the various problems we ran into at different
stages of the project and for providing insight. Andrei
C\u{a}ld\u{a}raru was partially supported by the National Science
Foundation through grant numbers DMS-1811925, DMS-2152088, DMS-2202365
and by the Wisconsin Alumni Research Foundation through a Vilas
Associate Professorship. Junwu Tu was partially supported by the NSFC
grant 12071290, and STCSM grant 20JC1413300.

\section{A motivating example}
\label{sec:example}

In the first part of this section we work through the explicit example
of computing the invariant
\[ \biota(F_{1,1})(\gamma) = \langle \gamma \rangle_{1,1}^X \]
for a compact symplectic manifold $X$ and an insertion
$\gamma\in H^2(X, \C)$.  Comparing the geometric Gromov-Witten
computation with the formula~(\ref{subsec:11}) for $\biota(F_{1,1})$
provides geometric intuition for the algebraic constructions of CEI
theory that will be developed later in the paper.

In the second part we give an outline of the algebraic generalizations
of the constructions used in the above example.  This outline frames
the remainder of the paper.

\subsection{String vertices}
We will work with moduli spaces $M_{g,k,l}$ parametrizing Riemann
surfaces of genus $g$ with $k+l$ marked points, $k$ of which are
designated as inputs and the rest as outputs.  Just like the ordinary
Deligne-Mumford moduli spaces of curves, the spaces $M_{g,k,l}$ admit
compactifications $\Mbar_{g,k,l}$ parametrizing stable curves.

We will also need a framed version $M_{g,k,l}^\fr$ of $M_{g,k,l}$: its
points parametrize the same Riemann surfaces as before, but around
each marked point a small disk has been excised, and a framing of the
boundary circle thus created has been fixed by marking a distinguished
point on this circle.  One can think of a framing around a point $P$
as fixing (up to homotopy) the complex structure of a small
neighborhood of $P$ on the original surface.  (We use the fact that
the moduli space of holomorphic structures on an annulus with framed
boundary is homotopic to $S^1$.)  There is a natural forgetful map
$M_{g,k,l}^\fr \ra M_{g,k,l}$ which is an $(S^1)^{k+l}$-fibration.

The homotopy type of the complement, inside $\Mbar_{g,k,l}$, of an
$\varepsilon$-tubular neighborhood of the boundary does not depend on
$\varepsilon$ as long as it is small enough.  This complement defines
a (non-closed) rational singular chain $\cV_{g,k,l}$ on $M_{g,k,l}$,
unique up to homotopy, which will be called the string vertex of type
$(g,k,l)$.  The chains $\cV_{g,k,l}$ were first introduced in physics
by Sen-Zwiebach~\cite{SenZwi}; a precise mathematical definition of
them was given by Costello~\cite{Cos2}.  (To be precise, the string
vertex $ \cV_{g,k,l}$ will be constructed as an
$(S^1)^{k+l}$-equivariant chain on $M_{g,k,l}^\fr$. We will largely
ignore this distinction in the informal discussion in this section.)

For our example we need to understand $\cV_{0,1,2}$ and
$\cV_{1,1,0}$.  All string vertices are defined recursively starting
from $\cV_{0,1,2}$, which is taken to be one-half the
fundamental class of $M_{0,1,2} = \pt$, a closed chain in
$C_0(M_{0,1,2})$.  (The coefficient $1/2$ is not important at this
point, but the reason for it is the fact that the two outputs of
$M_{0,1,2}$ are symmetrized.)

To define $\cV_{1,1,0}$ we use the natural operator
\[ \Delta:C_0(M_{0,1,2}) \ra C_1(M_{1,1,0}) \]
which twist-sews the two marked outputs of a Riemann surface 
of type $(0,1,2)$. The twist-sew operation cuts small disks around the
output markings, frames the resulting boundary circles, and sews them
in all the $S^1$ possible ways to do this without necessarily aligning
the framings.  Note that the choice of framing does not 
matter in the end.

It is easy to visualise $\Delta(\cV_{0,1,2})$. It is a closed 1-chain
in $M_{1,1,0}$ whose points parametrize those elliptic curves which,
in the hyperbolic metric, have a shortest geodesic of length a fixed
small positive number (the circumference of the small disks that were
excised from the sphere $\C\pj^1$ before twist-sewing).  When this
circumference is sufficiently small, $\Delta(\cV_{0,1,2})$ will be a
small circle surrounding the cusp point $\infty$ in
$\Mbar_{1,1,0} = S^2$.  (We ignore the fact that these moduli spaces
have non-trivial stack structure, as we are only interested in their
rational chains/homology.)  See the figure below.

The crucial observation, due to Costello, is that $\Delta(\cV_{0,1,2})$
bounds a unique (up to homotopy) chain
$\cV_{1,1,0} \in C_2(M_{1,1,0})$.  In other words, the equation
\[ \del(\cV_{1,1,0}) + \Delta(\cV_{0,1,2}) = 0 \] admits a unique
solution $\cV_{1,1,0}$.  (The above is a particular case of the
quantum master equation that will be used later to define all
string vertices.)  As mentioned earlier, $\cV_{1,1,0}$ is the
complement of the small tubular neighborhood $D$ (in this case, a
disk) of the boundary $\del\Mbar_{1,1,0} = \{\infty\}$ in
$\Mbar_{1,1,0}$.

\[ \includegraphics[scale=.35]{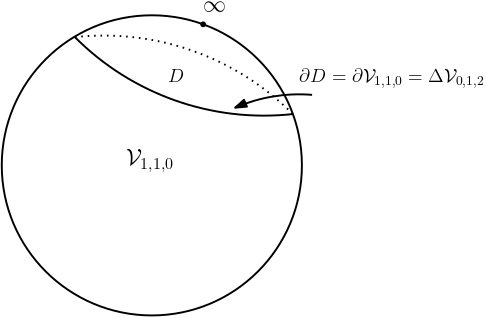} \]

\subsection{Field theories}
There are two field theories associated to a symplectic manifold
$X$.  One is the cohomological field theory (CohFT) of $X$ defined
by Kontsevich-Manin~\cite{KonMan}.  The other is a (conjectural)
geometric TCFT constructed from the Fukaya category of $X$.  We need
to understand the relationship between these two theories.

In the CohFT approach we associate to an insertion $\gamma\in H^2(X)$
the closed differential 2-form $\Omega_{1,1}(\gamma)$ on $\Mbar_{1,1}$
defined by
\[ \Omega_{1,1}(\gamma) = \pi_*(\ev_1^* \gamma). \]
Here $\pi$ and $\ev_1$ are the forgetful maps from $\Mbar_{1,1}(X)$ to
$\Mbar_{1,1}$ and to $X$, respectively; the push-forward map
$\pi_*$ uses the virtual fundamental class of $\Mbar_{1,1}(X)$.  The
Gromov-Witten invariant $\langle \gamma \rangle_{1,1}^X$ is recovered
by integrating $\Omega_{1,1}(\gamma)$ along $\Mbar_{1,1}$:
\[ \langle \gamma \rangle_{1,1}^X= \int_{\Mbar_{1,1}}
  \Omega_{1,1}(\gamma). \]
(We ignore the role of the homology class $\beta\in H_2(X)$, which is
incorporated in the Novikov ring $\Lambda$.)

A standard (folklore) conjecture asserts that, in parallel to the
construction of this CohFT, one should be able to construct a
geometric TCFT from the Fukaya category of $X$.  Its state space $V$
should be the dg-vector space of Floer cochains on $X$, whose
cohomology is identified with $H^*(X,\C) \otimes \Lambda$.

This geometric TCFT should contain information closely related to that
provided by the CohFT.  In particular, one of its structure maps
\[ C_*(M_{1,1,0}) \otimes V \ra \Lambda \]
should compute, for $\alpha\in C_*(M_{1,1,0})$, the integral
\[ \int_\alpha \Omega_{1,1}(\gamma). \]
(We ignore in this part of the discussion the distinction between
framed and unframed moduli spaces. To be correct, the above structure
map should take as input chains in the {\em framed} moduli space.)

Note that {\em a priori} $\Omega_{1,1}(\gamma)$ is not defined when
$\gamma$ is not closed; nevertheless, by abuse of notation we will
denote the value of the above TCFT map by
$\int_\alpha \Omega_{1,1}(\gamma)$ even when $\gamma$ is not closed.

Summarizing the above discussion, the main differences between the
TCFT and the CohFT are:
\begin{enumerate}
\item[--] in the TCFT we allow insertions (forms) $\gamma$ that
  are not closed, while in the CohFT we only allow closed ones
  (cohomology classes);
\item [--] in the CohFT we are allowed to integrate along chains in
  $\Mbar_{1,1}$; the TCFT only allows chains in $M_{1,1}$.
\end{enumerate}

\subsection{The main computation}
Our goal is to express the CohFT computation
\[ \langle \gamma \rangle_{1,1}^X  = \int_{\Mbar_{1,1}}
  \Omega_{1,1}(\gamma) \]
in terms of integrals available in the TCFT.  As a first step we
decompose the above integral as
\[ \int_{\Mbar_{1,1,0}} \Omega_{1,1}(\gamma) = \int_{\cV_{1,1,0}}
  \Omega_{1,1}(\gamma) + \int_D \Omega_{1,1}(\gamma). \]
The first term in this integral is readily available in the TCFT: the
chain $\cV_{1,1}$ is entirely contained in the uncompactified moduli
space $M_{1,1}$.

The second term can not be directly expressed in terms of the TCFT; in
fact its value is not determined by the TCFT alone.  Nevertheless, the
TCFT does assign a value to integrals of the form
\[ \int_{\del D} \Omega_{1,1}(\rho) \]
for 1-forms $\rho$ on $X$, possibly non-closed.  If we assume that the
TCFT compactifies (i.e., it can be extended to include integrals along
chains in $\Mbar_{1,1}$), Stokes' theorem implies that for a 1-form
$\rho$ we will have
\[ \int_{\del D} \Omega_{1,1}(\rho) = \int_{D} d\Omega_{1,1}(\rho) =
  \int_{D} \Omega_{1,1}(d\rho). \]

Let $P$ and $Q$ denote the operators $V \ra \Lambda$ obtained by
integrating a form along $D$ and $\del D$, respectively.  Then the
above equality can be written (up to sign) as
\[ Q = [d, P]. \]
In other words, if the TCFT compactifies, then the operator $Q$ (part
of the uncompactified TCFT structure) must be homotopy-trivial; the
choice of $P$ (which only makes sense in the compactified TCFT) must
be a null-homotopy trivializing $Q$.

Why should we expect the analogue of the operator $Q$ to be homotopy
trivial in the setting we are interested in, where the TCFT's circle
action has been trivialized?  For arbitrary $(g,k,l)$ there is no
reason to expect the analogue of the chain $\del D$ to be homotopy
trivial in the uncompactified space $M_{g,k,l}$ (the case
$(g,k,l) = (1,1,0)$ is misleading).  We want to ``close it up'' by
adding the missing boundary to $M_{g,k,l}$.  Nevertheless, we
want to argue that, if the circle action is homotopy trivial on $V$,
the action of $\del D$ on $V$ via the TCFT will also be homotopy
trivial.

Note that the chain $\del D$ is not arbitrary; it has been
constructed as $\del D = \Delta(\cV_{0,1,2})$.  The operation $\Delta$
includes a circle action in it, as it can be decomposed into the
following three operations applied to the 0-chain $\cV_{0,1,2}$:
\begin{itemize}
\item[1.] cut small disks around the marked outputs of a surface
  parametrized by $\cV_{0,1,2}$, and choose framings of each
  boundary circle; 
\item[2.] rotate the framing of one of the two boundary circles
  through all $S^1$ possible positions;
\item[3.] sew the two boundary circles while aligning the markings.
\end{itemize}
The second operation is the $S^1$ action applied to one of the
framings.  Thus, while the operator $\Delta$ on $C_*(M_{g,k,l}^\fr)$ is not
homotopy trivial in general, its action $Q$ on the TCFT will be, under the
assumption that the $S^1$ action on the TCFT is homotopy trivial.

All that is left at this point is to construct, explicitly, a homotopy
trivialization for $\Delta(\cV_{0,1,2})$ from a given trivialization
of the circle action.  What we will do, in fact, is to give a formula
for a homotopy trivialization $H$ of the composition $\Omega$ of the
actions of operations 2 and 3 above.  The TCFT action of these two
operations is given by
\[ \Omega(x,y) = \langle Bx, y \rangle \]
where $B$ denotes the circle action on $V$ and the pairing is the
Mukai pairing on Hochschild chains, see~(\ref{subsec:pre-example}).
If $B$ is trivialized by a splitting $s$, an explicit formula for a
trivialization $H$ of $\Omega$ is given in
Proposition~\ref{prop:homotopy}.

The formula~(\ref{subsec:11}) for $\biota(F_{1,1})$ expresses
precisely the above decomposition:
\[ \biota(F_{1,1}) = \includegraphics[trim=-1cm 1cm 0 0, scale=.4]{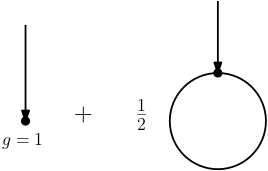}. \]
The first term denotes the integral along $\cV_{1,1,0}$ (the dot
denotes the string vertex of genus 1, with one input and zero
outputs), while the second one is the result of integrating along
$\cV_{0,1,2}$ (again, the vertex denotes the string vertex of genus 0,
with one input and two outputs), and applying to the result the
operator $H$ which trivializes $\Omega$ (the closing of the loop).

Note that it is not {\em a priori} clear that every extension of a
given TCFT to a compactified one comes from a choice of trivialization
of the circle action.  However, results in
\cite{KonSoi,Cos2,Dru,DSV,DSV2,KMS} and especially recent results
in~\cite{Des} suggest that this indeed is the case, so our approach to
constructing the Gromov-Witten compactified TCFT from the
uncompactified one and a trivialization of the circle action is justified.

\subsection{Algebraic generalization}
\label{subsec:alggen}
The entire discussion above admits an algebraic interpretation in
terms of chasing certain special Maurer-Cartan elements in the
following diagram of differential graded Lie algebras (DGLAs)
\[
  \begin{diagram}[height=2em,width=2.5em]
    \g & & \rDashTo  & & \h_A & \rTo^\cK & \h^{\triv}_A \\
    \dTo_\biota & & & & \dTo_\biota & & \dTo_\biota \\
    \hg & \lTo & \hg^{\sf comb} & \rTo^{\rho^A} &  \hh_A &
    \rTo^\hcK &  \hh_A^\triv.
  \end{diagram}
\]
Both the abstract construction of CEI and the explicit formulas for
them stated in Theorem~\ref{thm:intro} can be deduced from this
diagram.

We now give a brief description of the DGLAs and the maps between them
in the above diagram; the construction of this diagram is essentially
the contents of the paper.  The algebras in the top row of this diagram
are already recognizable from Costello's work~\cite{Cos2}.  The
contribution of this paper is the construction of the quasi-equivalent
second row, which is explicitly computable; in particular we give
explicit formulas for the quasi-isomorphisms $\cK$ and $\hcK$ which
are new.

Costello constructed the DGLA $\g$ from singular (topological) chains
on the moduli spaces of curves $M_{g,0,n}$.  This DGLA does not depend
on either the algebra $A$ or the splitting $s$.  (In physics,
Sen-Zwiebach call this {\em background independent} field theory.)  By
contrast, $\h_A$ and its abelianization $\h_A^\triv$ are constructed
from symmetric powers of Hochschild chains of the algebra $A$.  These
algebras do depend on $A$, but not on the splitting $s$.  The map
$\cK$, which is a quasi-isomorphism, can only be constructed in the
presence of $s$.

Costello proved that in $\g$ there is a distinguished solution of the
Maurer-Cartan equation, the so-called string vertex $\cV$.  Its
component in $C_*(M_{g,0,n})$ is precisely the string vertex
$\cV_{g,0,n}$ that has appeared earlier in this section.  Up to homotopy
the string vertex $\cV$ does not depend on any choices whatsoever.

If the dotted arrow were made explicit (and, conceptually, the
difficulty in constructing it, and CEI as a whole, is related to the
problem of extending a positive-boundary closed TCFT to an arbitrary
one) then the image $\beta^A\in \h_A$ of $\cV$ would carry precisely
the same information as Costello's {\em abstract} Gromov-Witten
potential.  Intuitively, $\beta^A$ carries the information of the TCFT
integrals along the string vertices (for all possible insertions),
i.e., one can compute from it terms similar to the integrals
$\int_{\cV_{1,1,0}} \Omega_{1,1}(\gamma)$ and
$\int_{\cV_{0,1,2}} \Omega_{0,3}(\gamma)$ that have appeared in the
calculation of the $(1,1)$-invariant.
  
This abstract potential $\beta^A$, which depends only on the algebra
$A$, is a special solution of the (complicated) quantum master
equation in $\h_A$.  However, $\beta_A$ is not naturally a power
series of Hochschild (or cyclic) homology classes, as we would like
the CEI to be.  To obtain the desired type of invariants we need to
use a splitting $s$ in order to further trivialize the DGLA $\h_A$.  We
set the bracket of $\h_A$ to zero in $\h_A^\triv$ and use $s$ to
construct the formality quasi-isomorphism $\cK:\h_A \ra \h_A^\triv$.
The element $\cK(\beta_A)$, which by definition is a solution of the
Maurer-Cartan equation in $\h_A^\triv$, becomes precisely an element
in the homology of $\h_A^\triv$.  This is precisely where we want the
descendent CEI potential $\cD^{A,s}$ to live, so it is reasonable to
take $\cD^{A,s} = \cK(\beta_A)$.

The problem with this approach is that to the best of our knowledge
there is no way to make the dotted map in the diagram explicit.  (This
problem was recognized by Costello, and his work involves a great deal
of effort to circumvent this issue in a non-constructive way.)  We
solve this difficulty by replacing all the DGLAs above by Koszul
resolutions of themselves, denoted by $\hg$, $\hh_A$, etc.  The
horizontal maps in the second row of the diagram can now be made
explicitly computable.  This involves further replacing the DGLA
$\hg$, which is still built from singular chains on moduli spaces, by
a DGLA constructed from ribbon graphs, following ideas of
Egas~\cite{Ega}.  This yields the combinatorial comparison
quasi-isomorphism $\hg^\comb\ra \hg$.  Note that even though the
vertical Koszul resolution maps $\biota$ are quasi-isomorphisms, their
inverses are not explicit and this prevents one from defining the
dotted map $\g \dashrightarrow \h_A$.

In practice we completely disregard the left vertical of the diagram.
Because both maps $\g \ra \hg$ and $\hg^\comb\ra \hg$ are
quasi-isomorphisms, Costello's theorem tells us that, up to homotopy,
there is a unique solution to the Maurer-Cartan equation in
$\hg^\comb$, and by uniqueness this must be the image of the original
string vertex $\cV$.  We call this solution the combinatorial string
vertex $\cV^\comb\in \hg^\comb$; it can be computed directly in
$\hg^\comb$, without reference to $\g$ or $\hg$.  By the commutativity
of the diagram, the image $\hcK(\rho^A(\cV^\comb)) \in \hh_A^\triv$ is
$\biota(\cD)$.  Breaking it down by genus and number of insertions
gives the desired definition of $\biota(F_{g,n}^{A,s})$.

\subsection{Plan of the paper}
\label{subsec:plan}
Section~\ref{sec:pre} collects preliminaries regarding complexes with
pairing and circle action, partly in order to fix notations. In
Section~\ref{sec:dgla} we present constructions of the DGLAs $\h_A$
and $\hh_A$ associated with the $A_\infty$-algebra $A$. In
Section~\ref{sec:vertices} we present constructions of the
Sen-Zwiebach DGLA $\g$ and its Koszul resolution $\hg$, as well as of
the combinatorial version $\hg^{\sf comb}$ of $\hg$.  We introduce the
notion of (combinatorial) string vertices, which are the components of
the unique solution of the Maurer-Cartan equation in these DGLAs.
Section~\ref{sec:def} gives a detailed account of the definition of
categorical enumerative invariants (CEI) following the ideas
above. Logically this definition relies on Lemma~\ref{lem:extension}
proved in the subsequent section~\ref{sec:trivial-1}.

Sections~\ref{sec:trivial-1} and~\ref{sec:trivial-2} deal with the
construction, in terms of a splitting $s$, of the $L_\infty$
trivializing maps $\cK$ and $\hcK$.  In the final section
(Section~\ref{sec:mainthm}) we obtain the Feynman sum formulas for CEI
described in Theorem~\ref{thm:intro}.

\section{Preliminaries}

\label{sec:pre}
\subsection{Notations and conventions}
\label{subsec:signs}
Unless otherwise stated, all our vector spaces will be $\Z/2\Z$-graded
over a field $\bbK$ of characteristic zero.  (In the $\Z$-graded case
we obtain an analogue of the dimension axiom for Gromov-Witten theory
in Appendix~\ref{app:z-graded}.)  Nevertheless, we think of our
chain complexes as being {\em homologically} graded: differentials
decrease degree. For a $\Z$-graded vector space $V=\oplus_n V_n$ we
use the homological grading convention: the shift $V[k]$ is the graded
vector space whose $n$-th graded piece is $V_{n-k}$. In the
$\Z/2\Z$-graded case we shall still use the same notation $[k]$ even
though only the parity of $k$ matters.

Secondly, we shall always use the shifted sign conventions described
in~\cite{Cho,She}.  The shifted degree $|x|'$ of an element
$x$ is defined to be
\[ |x|' = |x|+1. \]
All the operations in cyclic $A_\infty$-algebras, including the
pairing, and in $L_\infty$-algebras, including DGLAs, differ by the
sign
\[ (-1)^{\sum_{k=1}^n (n-k)|x_k|} \]
from the usual ones when applied to a tensor
$x_1\otimes\cdots\otimes x_n$.  In particular a Maurer-Cartan element
in a DGLA has shifted degree zero.

Thirdly, we shall use {\em sign diagrams} as introduced by
Sheridan~\cite[Appendix C]{She} to deal with signs involved when
composing multi-linear maps. For example, a multi-linear map $\phi$
with $2$ inputs and $3$ outputs will be drawn as:
\[\includegraphics[scale=.7]{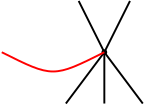}\]
The extra red line has degree $|\phi|$. As explained in {\em
  loc. cit.}, adding the red line makes the total degrees around the
vertex even, which allows one to move around vertices in a sign
diagram.  In other words, signs are invariant under isotopies of sign
diagrams.

\subsection{Representations of annuli}
\label{subsec:circle}
We shall frequently use complexes with circle action, together with a
compatible pairing on them.  From the TCFT point of view these are
precisely the same thing as the representations of the sub-PROP of
annuli (see ~\cite{Cos2} and Section~\ref{sec:vertices}).

A circle action on a chain complex $(C, \partial)$ is given by an
operator $\delta: C \ra C$ of degree one, satisfying $\delta^2=0$ and
$[\partial,\delta]=\partial\delta+\delta\partial=0$.  Associated with
a chain complex with circle action we set
\begin{align*}
  C^\Tate& =\big( C\laurent{u}, \partial+u\delta\big)\\
  C_+& = \big( C\series{u}, \partial+u\delta\big)\\
  C_-& = C^\Tate/C_+=\big( C[u^{-1}], \partial+u\delta\big),
\end{align*}
where $u$ is a formal variable of even degree.  Observe that there is
a short exact sequence
\[ 0 \ra u\cdot C_+ \ra C^\Tate \ra C_- \ra 0.\]

For a given chain complex $(C,\partial)$ the shift $C[1]$ is endowed
with the differential $-\partial$.  For example, the differential of
the complex $C_+[1]$ is given by $-\partial - u\delta$. To avoid
confusions, we shall reserve the notation $\eth$ for all circle
equivariant boundary maps. For example, when considering the
$\Hom$-complex $\Hom (L_+[1], L_-)$ we put the differential
\[ f \mapsto [\eth, f] = (\partial+u\delta) f - (-1)^{|f|} f \big(-
(\partial+u\delta)\big) = (\partial+u\delta) f + (-1)^{|f|} f
(\partial+u\delta).\]

A pairing on a chain complex with circle action is a symmetric
bilinear chain map of even degree
\[ \pairing : C \otimes C \ra \bbK \]
such that the circle operator $\delta$ is self-adjoint, i.e., we have
\[ \langle \delta x, y\rangle = (-1)^{|x|}\langle x, \delta y\rangle
  \mbox{ for all } x, y\in C. \]
A pairing like the one described above induces a so-called {\em higher
  residue pairing} on the associated Tate complex, with values in
$\bbK\laurent{u}$, defined for $x = \sum x_k\cdot u^k$ and
$y = \sum y_l\cdot u^l$ by
\[  \langle x,\, y \rangle_{\sf hres} = \sum_{k,l} (-1)^l \langle
  x_k,\, y_l\rangle \cdot u^{k+l}. \]
Its residue at $u=\infty$\footnote{The choice of taking residue at
  $u=\infty$ instead of $u=0$ (corresponding to $k+l=-1$) is motivated
  by our desire for homogeneity in the construction of the Weyl
  algebra later in~(\ref{subsec:fock}). Although there might
  be a more conceptual reason for this related to the Fourier-Laplace
  transform of Hodge structures~\cite{Sab}. } is called the residue
pairing, valued in $\bbK$ and defined by
\[ \langle x\cdot u^k, y\cdot u^l \rangle_{\sf res} =
  \begin{cases} (-1)^l \langle x, y\rangle \;\;\; &\mbox {if} \;\;
    k+l=1\\ 0 \;\;\; & \mbox{otherwise.}
  \end{cases}
\]
Due to the sign $(-1)^l$ the residue pairing $\pairing_\res$ is
anti-symmetric. It is an even pairing since the original pairing is even.

\subsection{Weyl algebra and Fock spaces}
\label{subsec:fock}

We now review the relationship, discovered by Costello~\cite{Cos2},
between Batalin-Vilkovisky (BV) algebras and Fock spaces for dg-vector
spaces with circle action and pairing.  This builds on earlier work of
Givental~\cite{Giv}.

Let $(C, \del, \delta, \pairing)$ be a complex with circle action and
pairing, as describe above.  Associated with the symplectic vector
space $(C^\Tate, \langle-,-\rangle_\res)$ is the Weyl algebra
$\cW\big( C^\Tate\big)$ defined as
\[   \cW\big(C^\Tate\big)= T\big( C^\Tate\big)\series{\hbar}/\big (
  \alpha\otimes\beta-(-1)^{|\alpha||\beta|}\beta\otimes\alpha =
  \hbar\langle \alpha,\beta\rangle_{\sf res}\big). \]
Here, again, the formal variable $\hbar$ is of even degree. Note that
the relation is homogeneous with respect to the $\Z/2\Z$-grading.

The positive subspace $u\cdot C_+$ is a subcomplex of $C^\Tate$.
Hence the left ideal $\big( u \cdot C_+ \big)_L$ generated by this
subspace is a dg-ideal in $\cW(C^\Tate)$. The quotient
\[ \mbox{$\cF$} = \cW\big(C^\Tate\big)/ \big( u \cdot C_+ \big)_L \]
is known as the Fock space of $\cW(C^\Tate)$; it is a left dg-module
of the Weyl algebra.

The linear subspace $C_-$ of $C^\Tate$, on the other hand, is {\em
not} a subcomplex: the differential $\del+u\delta$ of an element
$u^0x$ is by definition equal to $u^0\del(x)$ when computed in $C_-$,
but it equals $u^0\del(x) + u^1\delta(x)$ in $C^\Tate$.

Nevertheless, disregarding differentials, $C_-$ is still isotropic as
a graded vector subspace of the symplectic space $(C^\Tate,
\pairing_\res)$.  This yields an embedding of algebras (without
differentials!)
\[ \Sym( C_-) \series{\hbar} \hookrightarrow \cW\big(C^\Tate\big). \]
Post-composing with the canonical projection to the Fock space yields
an isomorphism of graded vector spaces
\[ \mbox{$\Sym$}(C_-) \series{\hbar}\stackrel{\sim}{\lra} \cF. \]

\subsection{Main example}
\label{subsec:pre-example}
The main example of a chain complex with circle action and pairing is
provided by the shifted reduced Hochschild chain complex of a cyclic
unital $A_\infty$-algebra $A$,
\[ L = C_*(A)[d].\]
(Any $d\in\Z$ will work, though in practice if $A$ is a cyclic
$A_\infty$-algebra we take $d$ to be its Calabi-Yau dimension.)  Its
homology is the shifted Hochschild homology of $A$,
\[ H = H_*(L) = HH_*(A)[d]. \]
Using the cyclic pairing and condition $(\dagger)$
in~(\ref{cond:dagger}) $H$ is isomorphic to the dual of the Hochschild
cohomology of $A$, also known as $HH^*(A, A^*)$.  The degree of an
element $x_0[x_1|\ldots | x_n] \in L$ is $\sum_{k=1}^n |x_k|' +(d-1)$
with the extra $-1$ being due to the fact that $x_0$ is not shifted in
defining $L$.  The circle action is given by the Connes operator $B$,
and the symmetric pairing on $L$ is the chain-level Mukai pairing
\[ \pairing_\Muk : L\otimes L \ra \bbK.\]
(The chain-level Mukai pairing is given by the action on the TCFT of the
unique homology class in $H_0(M_{0,2,0})$.  An explicit
formula for it can be found in~\cite[Definition 5.19]{She}.)  As described in
the previous section, if we denote by $\cF_A$ the Fock space
associated with the quadruple $(L,b,B,\langle-,-\rangle_\Muk)$ then
we have a natural isomorphism of graded vector spaces
\[ \mbox{$\Sym$}(L_-) \series{\hbar}\stackrel{\sim}{\lra} \cF_A. \]
We shall use this isomorphism to define a DGLA structure on
$\Sym(L_-)\series{\hbar}$ in the next section.

\section{Two DGLAs associated to a representation of annuli} 
\label{sec:dgla}

Throughout this section we continue to work with a cyclic
$A_\infty$-algebra $A$ satisfying condition $(\dagger)$ in
in~\eqref{cond:dagger}. We also work with the notations $L$, $L_-$,
$L_+$ and $L^\Tate$ as introduced in~(\ref{subsec:pre-example}).  We
will associate to this data the two DGLAs $\h$ and $\hh$ described
in~(\ref{subsec:alggen}) and construct the quasi-isomorphism
$\biota$.

\subsection{\texorpdfstring{A first DGLA associated with $A$}{A first
    DGLA associated with A}}
\label{subsec:dgla1}
Costello~\cite{Cos2} observed that under the isomorphism
\[ \mbox{$\Sym$}(C_-) \series{\hbar}\stackrel{\sim}{\lra} \cF. \]
in~(\ref{subsec:fock}) the differential on the Fock space pulls back
to a differential on the symmetric algebra $\Sym L_- \series{\hbar}$
of a special form.  It can be written as $b+uB+\hbar\Delta$ for a BV
differential $\Delta$, which means that $\Delta$ is a degree one,
square zero, second order differential operator of the symmetric
algebra.  As such it is uniquely determined by its action on
$\Sym^{\leq 2} L_-$; moreover, it vanishes on $\Sym^{\leq 1} L_-$, and
on $\Sym^2 L_-$ it is explicitly given by the formula
\[  \Delta(x\cdot y) =\Omega(x,y)=\langle Bx_0, y_0\rangle, \]
for elements $x,y\in L_-$ of the form
\begin{align*}
  x & = x_0 + x_{-1}u^{-1}+x_{-2}u^{-2} + \cdots, \\
  y & = y_0 + y_{-1}u^{-1}+y_{-2}u^{-2} + \cdots.
\end{align*}
Getzler~\cite{Get} proved that a BV differential such as $\Delta$
induces a DGLA structure on the shifted vector space
$(\Sym L_-)\series{\hbar}[1]$.  More precisely, we define a DGLA $\h_A$
whose graded vector space is 
\[ \h_A = \Sym(L_-)\series{\hbar,\lambda}[1]\]
(with another even formal variable $\lambda$ added) and with
differential $b+uB+\hbar\Delta$, extended $\lambda$-linearly.  The Lie
bracket, which by our convention~(\ref{subsec:signs}) is a symmetric
map
\[ \{-,-\} : \h_A[1] \otimes \h_A[1] \ra \h_A[1], \]
is defined to be the failure of $\Delta$ to be a derivation:
\[ \{x,y\} = \Delta(x\cdot y) -(\Delta x \cdot y)-(-1)^{|x|}(x\cdot
  \Delta y) .\]
Since $\h_A[1]= \Sym(L_-)\series{\hbar,\lambda}[2]$,  the
degree $|x|$ here is computed in $\Sym(L_-)$.

\subsection{\texorpdfstring{A second DGLA associated with $A$}{A
    second DGLA associated with A}}
\label{subsec:dgla-2}
There is another DGLA associated with the $A_\infty$-algebra $A$ which
we now construct.  As a $\Z/2\Z$-graded vector space it is given by
\[ \hh_A = \bigoplus_{k\geq 1,l} \Hom^\cont\big (\Sym^k(L_+[1]),
  \Sym^l( L_-)\big)\series{\hbar,\lambda}.\]
Here the superscript ``$\cont$'' denotes continuous homomorphisms in
the $u$-adic topology. To simplify the notation we will write
\[ L_{k,l} = \Hom^{\sf c} \left (\Sym^k(L_+[1]), \Sym^l(L_-)
  \right). \]
The degree of an element $\Phi\in L_{k,l}$ will be denoted by
$|\Phi|$. 

As in~(\ref{subsec:circle}), in order to avoid confusion about signs
we shall use the notation $\eth$ for the differential induced from
$b+uB$. For example, for an element $\Phi \in L_{k,l}$, the differential
$\eth$ acts as
\[ \eth \Phi = (b+uB) \Phi + (-1)^{|\Phi|} \Phi (b+uB).\]
Note that this is indeed the commutator action, due to the shift $[1]$
of $L_+$.

To define $\hh_A$ we need to define its differential and bracket.
We begin with the former, which will be of the form
$\eth+\hbar\Delta+\iota$ for a BV operator $\Delta$ and a new
``input-output'' operator $\iota$ which we now define.

First, the BV operator $\Delta$ in the previous subsection can be
extended to $\Delta: L_{k,l} \ra L_{k,l-2}$ by simply setting
\[ \Delta (\Phi) = \Delta \circ \Phi, \;\;\; \mbox{for} \;\; \Phi \in
  L_{k,l}.\]
One easily verifies that $(\eth+\hbar \Delta)^2=0$ on $\hh_A$.

For an element $\beta\in L_+[1]$ we define a contraction operator
$C_\beta: \Sym^l L_- \ra \Sym^{l-1} L_-$ by
\[ C_\beta \big( \alpha_1\cdots\alpha_l \big) = \sum_{i=1}^l
(-1)^{|\alpha_i|(|\alpha_1|+\cdots+|\alpha_{i-1}|)} \langle
u\beta,\alpha_i\rangle_\res\cdot
\alpha_1\cdots\widehat{\alpha_i}\cdots\alpha_l\]
\begin{Lemma}
  The following identities hold:
  \begin{itemize}
  \item[(a)] $C_\beta$ is a derivation of $\Sym L_-$ of degree
    $|\beta|'$, i.e. we have
    \[C_\beta (\alpha\cdot \gamma) = C_\beta (\alpha) \cdot \gamma
      +(-1)^{|\alpha||\beta|'} \alpha\cdot C_\beta(\gamma)\]
  \item[(b)] $C_\beta\circ \Delta -(-1)^{|\beta|'} \Delta \circ C_\beta
    =0$.
  \item[(c)] $C_{\beta_1} C_{\beta_2} -(-1)^{|\beta_1|'|\beta_2|'}
    C_{\beta_2}C_{\beta_1} =0$.
  \item[(d)] $[b+uB,C_\beta] = -C_{\eth\beta}$ with $\eth\beta =
    -(b+uB)\beta$.
  \end{itemize}
\end{Lemma}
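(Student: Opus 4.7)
The plan is to prove each part by directly expanding the definitions and tracking Koszul signs, exploiting the facts that $C_\beta$ is defined factor-by-factor on the symmetric algebra and that the pairings entering the formulas for $\Delta$ and $C_\beta$ act on individual tensor factors of a symmetric monomial.

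For (a), I would expand $C_\beta(\alpha\cdot\gamma)$ with $\alpha=\alpha_1\cdots\alpha_p$ and $\gamma=\gamma_1\cdots\gamma_q$ directly from the defining formula. The sum over contraction positions splits naturally into positions landing in $\alpha$ and positions landing in $\gamma$; the first summand recovers $C_\beta(\alpha)\cdot\gamma$, and the second produces $\alpha\cdot C_\beta(\gamma)$ with the extra Koszul sign $(-1)^{|\alpha||\beta|'}$ produced by commuting the degree-$|\beta|'$ operation past the degree-$|\alpha|$ factor. The degree $|\beta|'$ itself is forced by $C_\beta(\alpha_1)=\langle u\beta,\alpha_1\rangle_\res$: the residue pairing is even and $u$ has even degree, while the shift $[1]$ on $L_+$ contributes the unit that converts $|\beta|$ into $|\beta|'=|\beta|+1$.

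For (b), the strategy is to use that $\Delta$ is a second-order BV operator on $\Sym L_-$ while $C_\beta$ is, by (a), a first-order derivation, and that the pairings appearing inside $\Delta$ and $C_\beta$ act on disjoint factors of a symmetric monomial. Expanding
\[ \Delta(\alpha_1\cdots\alpha_l) = \sum_{i<j}\varepsilon_{ij}\,\Omega(\alpha_i,\alpha_j)\,\alpha_1\cdots\widehat{\alpha_i}\cdots\widehat{\alpha_j}\cdots\alpha_l \]
and then applying $C_\beta$, and comparing with the reverse order of operations, produces two sums indexed by the same triples; the only difference is the Koszul sign incurred in commuting the contraction against $u\beta$ past the two factors pulled out by $\Delta$, which evaluates to $(-1)^{|\beta|'}$. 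On $\Sym^{\leq 1} L_-$ both compositions vanish, since either $\Delta$ vanishes directly, or $C_\beta$ lands in $\Sym^0$ where $\Delta$ again vanishes.

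For (c), I would expand $C_{\beta_1}C_{\beta_2}(\alpha_1\cdots\alpha_l)$ as a double sum over ordered pairs of indices and recognise that swapping $\beta_1\leftrightarrow\beta_2$ reindexes the sum up to the Koszul sign that records the transposition of the two degree-$|\beta_i|'$ contractions, which is exactly $(-1)^{|\beta_1|'|\beta_2|'}$. For (d), I would exploit that $b+uB$ acts on $\Sym L_-$ as the derivation extension of its action on $L_-$ and that $\langle-,-\rangle_\res$ is a chain map in each variable. Expanding $(b+uB)C_\beta(\alpha_1\cdots\alpha_l)$ and $C_\beta(b+uB)(\alpha_1\cdots\alpha_l)$, the terms in which $b+uB$ hits a factor $\alpha_k$ cancel in pairs in the supercommutator. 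What remains is the term in which $b+uB$ is absorbed into the pairing, producing $\langle u(b+uB)\beta,\alpha_i\rangle_\res = -\langle u\,\eth\beta,\alpha_i\rangle_\res$ via $\eth\beta=-(b+uB)\beta$; summing over $i$ gives $-C_{\eth\beta}(\alpha_1\cdots\alpha_l)$.

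The main obstacle throughout is sign bookkeeping under the shifted conventions of Section~3.1, in particular the interplay between the $[1]$-shift on $L_+$, the convention $|x|'=|x|+1$, the even variable $u$, and the fact that $\Delta$ mixes the circle operator $B$ with the original pairing on $L$ while $C_\beta$ uses the residue pairing on $L^\Tate$. A sign-diagram calculation in the style introduced in Section~3.1 would be the cleanest way to execute the verifications for (b) and (d).
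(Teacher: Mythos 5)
Your proposal is correct and takes essentially the same route as the paper, whose proof of this lemma is simply ``direct calculation'': all four identities follow from expanding the defining formulas for $C_\beta$, $\Delta$, and the derivation action of $b+uB$ on a symmetric monomial and tracking the Koszul signs, exactly as you outline. The structural observations you add (splitting the contraction sum over factors for (a), disjointness of the contracted factors for (b) and (c), cancellation of the non-contracted terms in the supercommutator for (d)) are the right organizing principles for that calculation.
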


\begin{proof} Direct calculation.
\end{proof}

We now use these contraction operators to construct one more
Koszul-type differential $\iota$ on $\hh_A$.  On individual components
$L_{k,l}$ this is the odd map $\iota: L_{k,l} \ra L_{k+1,l-1}$ defined by
\[  \iota (\Phi) (\beta_1\cdots\beta_{k+1}) = \sum_{j=1}^{k+1}
  (-1)^{|\beta_j|(|\beta_1|+\cdots+|\beta_{j-1}|+|\Phi|)} C_{\beta_j} 
  \Phi(\beta_1\cdots\widehat{\beta_j}\cdots \beta_{k+1}) \]
Graphically, this definition is presented as the following sign diagram
\[\includegraphics[scale=.5]{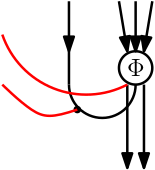}\]
(See~\cite{She} for details on sign diagrams.)

\begin{Lemma}
  \label{lem:square-zero}
  We have $(\eth+\iota + \hbar \Delta)^2=0$.
\end{Lemma}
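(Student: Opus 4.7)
The plan is to expand the square and verify it decomposes into six graded commutators that individually vanish:
\[
(\eth+\iota+\hbar\Delta)^2 = \eth^2 + \iota^2 + \hbar^2\Delta^2 + [\eth,\iota] + \hbar[\eth,\Delta] + \hbar[\iota,\Delta],
\]
where all three pieces are odd operators, so the ``squares'' are really the graded self-brackets. Three of these vanish for reasons that do not use the preceding lemma: $\eth^2=0$ follows from $(b+uB)^2=0$; the BV identity $\Delta^2=0$ on $\Sym(L_-)$ lifts to $\Delta^2=0$ on each $L_{k,l}$ because $\Delta$ acts on $\hh_A$ by post-composition; and $[\eth,\Delta]=0$ is just the statement, on the Fock-space side of~(\ref{subsec:dgla1}), that $b+uB$ and $\Delta$ graded-commute, transported to $\hh_A$ via a short bookkeeping calculation that shows the ``right-hand'' commutator contribution cancels.

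The three remaining identities are where the previous lemma is used. For $\iota^2=0$, applying $\iota$ twice to $\Phi\in L_{k,l}$ produces a double sum $\sum_{j\neq i} \pm C_{\beta_j}C_{\beta_i}\Phi(\beta_1\cdots\widehat{\beta_i}\cdots\widehat{\beta_j}\cdots\beta_{k+2})$; pairing the $(i,j)$ and $(j,i)$ terms and applying Lemma part (c) together with the Koszul signs coming from reordering the $\beta$'s shows each such pair cancels. For $[\iota,\Delta]=0$, Lemma part (b) says $C_\beta$ and $\Delta$ graded-commute in the form $C_\beta\Delta=(-1)^{|\beta|'}\Delta C_\beta$; substituting into the defining formula for $\iota$ and tracking the degree shift produced by $\Delta$ yields $\iota\Delta+\Delta\iota=0$ directly.

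The main obstacle, and the only step where one must be careful with the sign diagrams, is $[\eth,\iota]=0$. Here I would compute $\iota\eth\Phi$ by letting $(b+uB)$ act on the right-hand inputs, then use Lemma part (d), $[b+uB,C_{\beta_j}]=-C_{\eth\beta_j}$, to slide $(b+uB)$ past each $C_{\beta_j}$. The resulting correction terms $-C_{\eth\beta_j}\Phi(\beta_1\cdots\widehat{\beta_j}\cdots)$ recombine with the contributions in which $(b+uB)$ lands on one of the remaining $\beta_i$'s to reproduce exactly $-\eth\iota\Phi$, after the shifted-degree sign conventions of~(\ref{subsec:signs}) and the fact that $\iota$ is odd are accounted for. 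The argument is best organized by drawing the two sign diagrams for $\eth\iota\Phi$ and $\iota\eth\Phi$ and checking term-by-term that after the (d)-commutation the two diagrams agree up to an overall minus sign.

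Once all six contributions have been checked to vanish, the identity $(\eth+\iota+\hbar\Delta)^2=0$ follows. The $\hbar$-grading separates the three groups of terms (no $\hbar$, first order in $\hbar$, second order in $\hbar$), which is a useful internal consistency check on the calculation.
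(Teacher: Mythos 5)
Your proposal is correct and follows essentially the same route as the paper: the paper expands the square into graded commutators, notes that $(\eth+\hbar\Delta)^2=0$ was already checked, and asserts that $[\eth,\iota]=[\iota,\iota]=[\iota,\Delta]=0$ by direct calculation from the explicit formula for $\iota$ (omitting the details). Your sketch of how those three identities follow from parts (b), (c), (d) of the preceding lemma on the contraction operators $C_\beta$ is exactly the omitted calculation.
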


\begin{proof} With the explicit formula for $\iota$ one checks
directly that $[\eth,\iota]=[\iota,\iota]=[\iota,\Delta]=0$.  The proof
is again by direct calculation and will be omitted.
\end{proof}

Denote by $\Theta:L_-\ra L_+[1]$ the circle action map defined by
\[  \Theta(\alpha) = B(\alpha_0) \]
for $\alpha=\alpha_0+\alpha_{-1}u^{-1}+\cdots \in L_-$.  Note that
$\Theta$ is a map of chain map of degree zero because of the shift $[1]$:
\[ \Theta \eth (\alpha) = Bb\alpha_0 = - bB\alpha_0 = \eth \Theta
(\alpha).\]

For every natural number $r\geq 1$ we use $\Theta$ to define a
composition operation
\[  \circ_r: L_{k_2,l_2} \otimes L_{k_1,l_1} \ra L_{k_1+k_2-r,
    l_1+l_2-r}, \mbox{ for }l_1,k_2\geq r  \]
as follows. For a symmetric tensor $\gamma_1\cdots \gamma_n$ and a
subset $A=\{a_1,\ldots, a_s\}\subset \{1,\ldots, n\}$ with
$a_1<\cdots<a_s$ we write
\[ \gamma_A = \gamma_{a_1}\cdots\gamma_{a_s}.\]
With this notation we define
\[ (\Psi \circ_r \Phi) ( \beta_1\cdots \beta_{k_1+k_2-r})\! =\!
\sum_{P,Q}\sum_{I,J} \epsilon_{I,J}\epsilon_{P,Q}
(-1)^{|\Psi||\Phi(\beta_I)_P|} \Phi(\beta_I)_P\cdot \Psi\big(
\Theta^{\otimes r} ( \Phi(\beta_I)_Q)\otimes \beta_J\big).\]
The summation is over all shuffles $I\coprod J = \{ 1,\ldots,k_1+k_2-r\}$
of type $(k_1,k_2-r)$ and $P\coprod Q = \{1,\ldots, l_1\}$ of type
$(r,l_1-r)$. The signs $\epsilon_{I,J}$ and $\epsilon_{P,Q}$ are the
Koszul signs associated with the permutations  given by the shuffles.
The following sign diagram illustrate this definition in the case with
$k_1=3$, $l_1=5$, $k_2=4$, $l_2=3$ and $r=2$:
\[\includegraphics[scale=.6]{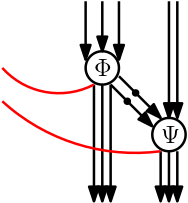}\]
The black dots in the middle arrows indicate that we apply the map
$\Theta$ at each strand.

Note that the composition defined above is {\em not} an associative
operation. Nevertheless, we shall make use of its commutators to
define a Lie bracket structure on $\hh_A$. Indeed, we define a
symmetric map $\{-,-\}_\hbar: \hh_A[1] \otimes \hh_A[1] \ra \hh_A[1]$
by putting
\[ \{ \Psi, \Phi \}_\hbar = (-1)^{|\Psi|} \sum_{r\geq 1}
\big(\Psi\circ_r \Phi - (-1)^{|\Psi||\Phi|} \Phi \circ_r \Psi\big)
\hbar^{r-1}.\]
The degrees $|\Phi|$ and $|\Psi|$ are computed in $\hh_A$. The extra
sign $(-1)^{|\Psi|}$ is due to our convention to use shifted signs in
DGLAs.

\begin{Theorem}
  \label{thm:h-hat}
  The triple $\big( \hh_A,\eth+\iota+\hbar \Delta, \{-,-\}_\hbar
  \big)$ forms a DGLA. 
\end{Theorem}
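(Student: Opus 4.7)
The plan is to verify the DGLA axioms directly, treating the identity $(\eth+\iota+\hbar\Delta)^2 = 0$ of Lemma~\ref{lem:square-zero} as the first axiom. What remains is graded symmetry of $\{-,-\}_\hbar$ in the shifted convention, the graded Leibniz rule, and the graded Jacobi identity.

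Graded symmetry is immediate by inspection: the combination $\Psi\circ_r\Phi - (-1)^{|\Psi||\Phi|}\Phi\circ_r\Psi$ is manifestly antisymmetric in $\Psi$ and $\Phi$, and incorporating the prefactor $(-1)^{|\Psi|}$ and passing to $\hh_A[1]$ converts this, via the shifted sign conventions of Section~\ref{subsec:signs}, into the required symmetry in $|\Psi|'$ and $|\Phi|'$.

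For the Leibniz rule, I would check separately that each of $\eth$, $\hbar\Delta$, and $\iota$ interacts correctly with the operations $\circ_r$. The operator $\eth = [b+uB,-]$ is a derivation of each $\circ_r$ because $\Theta: L_- \to L_+[1]$ is a degree zero chain map, so $b+uB$ commutes with the $r$ internal $\Theta$-strands. The operator $\Delta$ acts on outputs by post-composition and is a derivation of each $\circ_r$ using parts (a) and (b) of the Lemma above. The delicate case is $\iota$: applying $\iota$ to $\Psi\circ_r\Phi$ produces contractions of the new input against genuine outputs, which assemble into $(\iota\Psi)\circ_r\Phi + \Psi\circ_r(\iota\Phi)$, plus contractions against one of the $r$ already-glued $\Theta$-strands. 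The latter collapses the pairing $\langle u\beta,\alpha\rangle_{\sf res}$ against $\Theta(\gamma) = B\gamma_0$ to produce precisely the pairing $\langle B\alpha_0, \gamma_0\rangle$ that defines $\Omega$ in Section~\ref{subsec:dgla1}, and reconstructs $\hbar\Delta\{\Psi,\Phi\}_\hbar$ with an extra factor of $\hbar$ coming from the loss of one $\Theta$-strand. This is exactly why the $\hbar$ in front of $\Delta$ in the differential must match the $\hbar^{r-1}$-weighting inside the bracket.

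For Jacobi, I would expand $\{\Xi, \{\Psi,\Phi\}_\hbar\}_\hbar$ and its two cyclic variants into sums over pairs $(r,s)\geq 1$, then index the resulting terms by the combinatorial pattern of how the glued $\Theta$-strands distribute among the two nested compositions. Terms in which the two compositions act on disjoint slots cancel between the cyclic variants by the standard ``pre-Lie implies Lie'' mechanism, while the nested terms combine into a manifestly symmetric expression that cancels in its own right. The main obstacle, which I expect to be the hardest part, is bookkeeping: each $\circ_r$ already carries a sum over shuffles of inputs and outputs, the powers of $\hbar$ count ``bonded'' strands and must match up across the three cyclic variants, and the shifted Koszul signs must be tracked throughout. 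Sheridan's sign-diagram formalism, already used in this paper to present $\iota$, should make these cancellations transparent by reducing each identity to a planar isotopy of diagrams.
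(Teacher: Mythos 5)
Your overall strategy is the same as the paper's (direct verification: square-zero from Lemma~\ref{lem:square-zero}, then Leibniz, then Jacobi by pairwise cancellation of composition types), but your Leibniz argument contains a genuine error. You assert that $\Delta$ ``acts on outputs by post-composition and is a derivation of each $\circ_r$.'' This is false, and the paper says so explicitly: neither $\iota$ nor $\Delta$ is a derivation of $\{-,-\}_\hbar$; only the sum $\iota+\hbar\Delta$ is. The operator $\Delta$ is a \emph{second-order} operator on $\Sym L_-$, so post-composing it with the product $\Phi(\beta_I)_P\cdot\Psi(\cdots)$ appearing in $\Psi\circ_r\Phi$ produces, besides the two derivation terms, a cross-term that pairs a leftover output of $\Phi$ with an output of $\Psi$ via $\Omega$. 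The parts (a) and (b) of the contraction lemma that you cite concern $C_\beta$, not $\Delta$, and do not give the claimed derivation property.

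Moreover, your two claims are mutually inconsistent: if $\Delta$ were a derivation, the Leibniz defect of $\eth+\iota+\hbar\Delta$ would reduce to the $\iota$-defect alone, which you yourself compute to be the nonzero quantity $\hbar\Delta\{\Psi,\Phi\}_\hbar$ --- so your ledger proves the Leibniz rule \emph{fails} rather than holds. The correct bookkeeping, which is the heart of the paper's proof, is the identity
\begin{align*}
  \iota\{ \Psi, \Phi \}_r + \{ \iota \Psi, \Phi\}_r +
  (-1)^{|\Psi|'} \{ \Psi, \iota\Phi \}_r = {} &
  -\Delta\{\Psi,\Phi\}_{r-1}-\{\Delta \Psi,\Phi\}_{r-1} \\
  & -(-1)^{|\Psi|'}\{\Psi,\Delta\Phi\}_{r-1},
\end{align*}
i.e.\ the full three-term $\iota$-defect at level $r$ cancels the full three-term $\Delta$-defect at level $r-1$ (not just the single term $\Delta\{\Psi,\Phi\}$), the $\hbar$-powers matching because $\hbar\cdot\hbar^{r-2}=\hbar^{r-1}$; for $r=1$ the $\iota$-defect vanishes on its own. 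Your identification of the mechanism --- the new input of $\iota\Psi$ being fed a glued $\Theta$-strand collapses $C_{\Theta(\gamma)}$ to the $\Omega$-pairing --- is the right local picture, but you must organize it as a cancellation between the two defects rather than claiming one operator is a derivation and the other is not. Your Jacobi plan is consistent with the paper's (which cancels twelve composition types in pairs via sign diagrams), though note that the paper emphasizes $\circ_r$ is not associative, so the ``pre-Lie implies Lie'' shortcut must be replaced by a direct matching of nested and disjoint terms.
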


\begin{proof}
One can verify directly that $\eth$ satisfies the following Leibniz
property against the Lie bracket $\{-,-\}_\hbar$:
\[\eth \{ \Psi, \Phi \}_\hbar + \{\eth \Psi, \Phi \}_\hbar +
  (-1)^{|\Psi|'} \{ \Psi, \eth\Phi \}_\hbar=0.\]

Note that neither $\iota$ nor $\Delta$ are derivations
of the Lie bracket.  Nevertheless, their sum
$\iota+\hbar\Delta$ {\em is} a derivation, that is 
\[(\iota+\hbar \Delta) \{ \Psi, \Phi \}_\hbar + \{(\iota+\hbar \Delta)
\Psi, \Phi \}_\hbar+ (-1)^{|\Psi|'} \{ \Psi, (\iota+\hbar \Delta)\Phi
\}_\hbar=0.\]

To prove this equation, in the case when the power of $\hbar$ is zero,
we want to prove that
\[ \iota\{ \Psi, \Phi \}_1 + \{ \iota \Psi, \Phi\}_1 + (-1)^{|\Psi|'}
  \{ \Psi, \iota\Phi \}_1=0.\]

For this consider the following sign diagram:
\[\includegraphics[scale=.6]{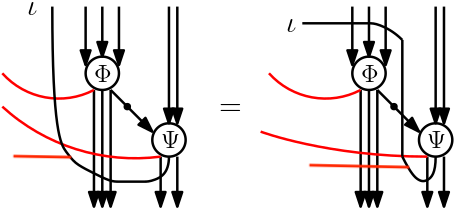}\]

The two diagrams are equal by isotopy invariance. The left hand
side appears in $\iota\{ \Psi, \Phi \}_1$ with the sign $(-1)^{|\Psi|}$,
while the right hand side appears in $ \{ \iota \Psi, \Phi\}_1$ with the
sign $(-1)^{|\Psi|'}$.  This shows that the two terms cancel out.

For general $r\geq 2$ we want to prove that
 \begin{align*} \iota\{ \Psi, \Phi \}_r + & \{ \iota \Psi, \Phi\}_r +
(-1)^{|\Psi|'} \{ \Psi, \iota\Phi \}_r =\\ & -
\Delta\{\Psi,\Phi\}_{r-1}-\{\Delta \Psi,\Phi\}_{r-1}-(-1)^{|\Psi|'}
\{\Psi,\Delta\Phi\}_{r-1}.
 \end{align*}
 
This is illustrated in the following sign diagram:
\[\includegraphics[scale=.6]{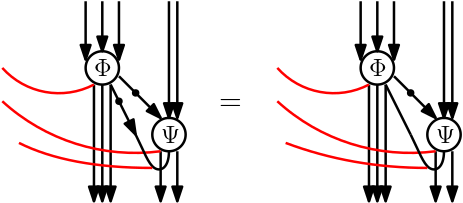}\]
The left hand side appears in $ \{ \iota \Psi, \Phi\}_r $ with the
sign $(-1)^{|\Psi|'}$, while the right hand side appears in $-
\Delta\{\Psi,\Phi\}_{r-1}$ with the same sign. Other cancellations are
similarly proved. 

It remains to prove the Jacobi identity, that for three elements
$\Psi, \Phi, \Upsilon \in \hh_A$ we have
\[ \{\{\Psi,\Phi\}_\hbar,\Upsilon\}_\hbar +(-1)^{|\Upsilon|'|\Phi|'}
\{ \{ \Psi,\Upsilon\}_\hbar, \Phi\}_\hbar +
(-1)^{|\Psi|'|\Phi|'+|\Psi|'|\Upsilon|'} \{ \{
\Phi,\Upsilon\}_\hbar,\Psi\}_\hbar=0 \]
There are in total 12 types of compositions in the above sum which
cancel out in pairs. For example, consider the following sign diagram:
\[\includegraphics[scale=.5]{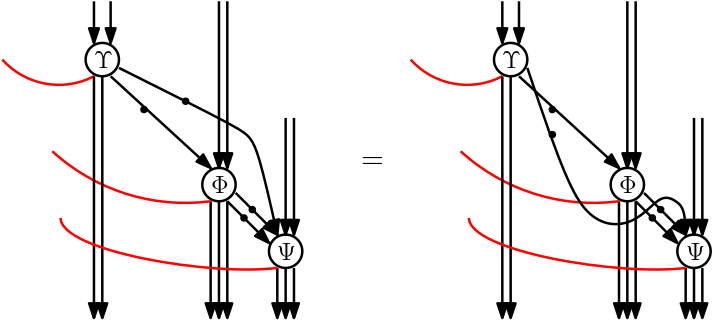}\]
The left hand side appears in $\{\{\Psi,\Phi\}_\hbar,\Upsilon\}_\hbar$
with the sign $(-1)^{|\Psi|}(-1)^{|\Psi|+|\Phi|}=(-1)^{|\Phi|}$, while
the right hand side appears in
$(-1)^{|\Psi|'|\Phi|'+|\Psi|'|\Upsilon|'} \{ \{
\Phi,\Upsilon\}_\hbar,\Psi\}_\hbar$
with the sign 
\[ (-1)^{|\Psi|'|\Phi|'+|\Psi|'|\Upsilon|'} (-1)^{|\Upsilon|} (-1)
  (-1)^{|\Psi||\Phi|+|\Psi||\Upsilon|}=-(-1)^{|\Phi|}, \]
which shows the two terms cancel out. The cancellation of the other
pairs is similar.
\end{proof}

\subsection{\texorpdfstring{The Koszul resolution map $\biota$}{The
    Koszul resolution map iota}}
\label{subsec:Koszul}
We now relate the two DGLAs $\h_A$ and $\hh_A$ associated with $A$ by
constructing a map $\biota$ between them.  To be precise, we will
construct a quasi-isomorphism $\biota:\h_A^+\ra \hh_A$ from a certain
quotient $\h_A^+$ of $\h_A$.  The distinction between $\h_A$ and
$\h_A^+$ turns out to be irrelevant, see the discussion after the statement
of Theorem~\ref{thm:K}.

Denote by
\[ \h_A^+= \left ( \Sym^{\geq 1} L_-\right) \series{\hbar,
    \lambda}[1] \]
the strictly positive symmetric powers of $L_-$. This is not a
sub-DGLA of $\h_A$, since for instance $\Delta (\Sym^2 L_-)$ would not
be contained in $\h_A^+$.  However, the scalar part
$\bbK\series{\hbar,\lambda}[1]$ is in the center of $\h_A$, implying
that $\h_A^+$ is a {\em quotient} of $\h_A$, i.e., there is a short
exact sequence
\[  0 \ra \bbK\series{\hbar,\lambda}[1] \ra \h_A \ra \h_A^+ \ra 0. \]
Define the map $\biota: \h_A^+ \ra \hh_A $ by setting
\[ \biota(\alpha) = (-1)^{|\alpha|} \iota (\alpha).\]
The sign in the definition of $\biota$ is needed in order to turn the
map $\iota$ (which satisfies the anti-commutative property of
Lemma~\ref{lem:square-zero}) into a map of chain complexes.
 
\begin{Lemma}
  \label{lem:biota}
  The morphism $\biota$ is a quasi-isomorphism of DGLAs.
\end{Lemma}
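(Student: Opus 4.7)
The plan is to verify three properties of $\biota$ in sequence: that it is a chain map, that it respects the Lie brackets, and that it induces an isomorphism on cohomology. The first two are direct computations; the third is the heart of the lemma.

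For the chain map property, applying $(\eth+\iota+\hbar\Delta)$ to $\biota(\alpha)=(-1)^{|\alpha|}\iota(\alpha)$ and using the graded-commutation identities $\iota^2=0$, $[\eth,\iota]=0$, $[\iota,\Delta]=0$ from Lemma~\ref{lem:square-zero}, together with the fact that $b+uB$ coincides with $\eth$ on $\Sym L_-$, yields $(\eth+\iota+\hbar\Delta)\biota(\alpha)=\biota((b+uB+\hbar\Delta)\alpha)$. The sign $(-1)^{|\alpha|}$ in the definition of $\biota$ is designed precisely to convert graded anti-commutation into strict commutation with the differential.

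For the Lie bracket, the key observation is that $\biota(\alpha),\biota(\gamma)\in L_{1,*}$ each have exactly one input ($k=1$), so the composition $\Psi\circ_r\Phi$, which requires $r\leq k_2=1$, only survives at $r=1$ in $\{\biota(\alpha),\biota(\gamma)\}_\hbar$. This surviving term feeds one output of $\biota(\gamma)$ through $\Theta$ into the input of $\biota(\alpha)$ and contracts via the residue pairing; by $\langle u\Theta(x),y\rangle_\res=\langle Bx_0,y_0\rangle$ it reproduces exactly the cross-contractions in $\Delta(\alpha\gamma)-(\Delta\alpha)\gamma-(-1)^{|\alpha|}\alpha\Delta\gamma$, matching the BV bracket on $\h_A^+$. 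Signs are tracked via the sign-diagram calculus.

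For the quasi-isomorphism I would proceed via two reductions. First, filter both sides by the $\hbar$-adic filtration; since $\hbar\Delta$ strictly raises $\hbar$-degree while the other pieces preserve it, and both $\h_A^+$ and $\hh_A$ are $\hbar$-adically complete, the spectral sequence converges and it suffices to prove $\biota$ is a quasi-iso modulo $\hbar$, i.e.\ between $(\Sym^{\geq 1}L_-)\series{\lambda}[1]$ with differential $b+uB$ and $\bigoplus_{k\geq 1,l}L_{k,l}\series{\lambda}$ with differential $\eth+\iota$. Both reduced complexes split by the total symmetric degree $N\geq 1$ (preserved by $\eth$ and $\iota$), so for each $N$ one must show that the bicomplex $L_{0,N}\xrightarrow{\iota}L_{1,N-1}\xrightarrow{\iota}\cdots\xrightarrow{\iota}L_{N,0}$, with vertical differential $\eth$, has total cohomology concentrated in bidegree $(0,N)$ and realized there by $\biota$. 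Running the spectral sequence of the decreasing $k$-filtration: $E_0$ has differential $\eth$; by a Künneth argument for continuous homs, $E_1^{k,l}\cong\Hom^{\cont}(\Sym^k H(L_+[1]),\Sym^l H(L_-))$ with $E_1$-differential the induced Koszul $\iota$.

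The main obstacle is establishing the exactness of this residual Koszul complex on $E_1$. Under the residue pairing, $H(L_-)$ pairs non-degenerately with a suitable shift of $H(L_+[1])$, and with this identification the complex becomes the classical Koszul/algebraic de Rham resolution for the free graded-commutative algebra on $H(L_-)$, known to be exact except at the first term. Some care is required because $L_\pm$ are infinite-dimensional power-series objects, so one must work systematically with continuous homomorphisms in the $u$-adic topology and verify that Koszul exactness survives completion --- either by exhibiting an explicit contracting homotopy built from an Euler-type derivation on $\Sym L_-$, or by reducing to finite-dimensional truncations of the Hochschild complex and passing to the limit. Once this is in place, $E_1=E_\infty$ and $\biota$ is a quasi-iso modulo $\hbar$, hence by the first reduction a quasi-isomorphism of DGLAs.
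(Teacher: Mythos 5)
Your proposal is correct and follows essentially the same route as the paper: reduce modulo $\hbar$ and by symmetric degree, use a spectral sequence together with the Hodge--de Rham degeneration property to replace $L_\pm$ by $H_\pm$ built from $H = HH_*(A)[d]$, and identify the resulting complex with the classical Koszul complex via the pairing, whose exactness in positive symmetric degree finishes the argument. The finiteness worry you flag at the end is resolved in the paper without any truncation or limiting argument: smoothness of $A$ makes $H$ finite dimensional and Shklyarov's theorem makes the Mukai pairing on $H$ non-degenerate, so on the $E_1$ page one has honest isomorphisms $\Hom^\cont(\Sym^k(H_+[1]),\Sym^l H_-)\cong \Sym^k(H_-[1])\otimes\Sym^l H_-$ and the infinite-dimensionality of $L_\pm$ never intervenes.
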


\begin{proof}
We shall prove that the mapping cone of $\biota$ is acyclic. For
this it suffices to prove that for each $n\geq 1$ the following
sequence of dg vector spaces is exact:
  \[ 0\ra \Sym^n L_- \ra \Hom^\cont( L_+[1], \Sym^{n-1} L_-)\ra \cdots
\ra \Hom^\cont( \Sym^n (L_+[1]), \mathbb{C}) \ra 0. \] The horizontal
arrows are the map $\iota$.  We argue that the total complex of this
sequence has zero homology, by considering the spectral sequence
associated with the $u$-filtration.  Since $A$ satisfies the
degeneration property, the $E^1$ page is given by
  \[ 0\ra \Sym^n H_- \ra \Hom^\cont( H_+, \Sym^{n-1} H_-)\ra \cdots
\ra \Hom^\cont( \wedge^n H_+, \mathbb{C}) \ra 0 \] where
$H=HH_*(A)[d]$ is the shifted Hochschild homology of $A$. Because $A$
is assumed to be smooth $H$ is finite dimensional.  From this
finiteness condition we can deduce the exactness of the $E^1$-page as
follows.  Observe that there is an isomorphism for each $k+l=n$
  \[ I_{k,l}: \Sym^k (H_-[1]) \otimes \Sym^l H_- \ra \Hom^\cont(
\Sym^k ( H_+[1]), \Sym^{l} H_-)\] defined using the Mukai pairing on
$H$.  In the case of $k=1$, $l=0$ this map is explicitly given by
  \[ I_{0,0} (x) (y) = \res_{u=\infty} \langle x, u\cdot y\rangle.\]
For general $k$ and $l$ it is defined similarly. The fact that
$I_{k,l}$ is an isomorphism follows from a result of
Shklyarov~\cite{Shk} (following~\cite{CalWil}) proving that the
categorical Mukai pairing is non-degenerate when $A$ is smooth and
proper.  Putting the isomorphisms $I_{k,l}$ together we conclude that
the $E^1$ page is isomorphic to the complex
  \[ 0\ra \Sym^n H_- \ra H_-\otimes \Sym^{n-1} H_- \ra \cdots \ra
\Sym^n (H_-[1]) \ra 0\] endowed with the usual Koszul differential.
Exactness follows.
\end{proof}

\section{The Sen-Zwiebach DGLA and string vertices}
\label{sec:vertices}

There are two other DGLAs $\g$ and $\hg$, constructed in the
topological world of chains on moduli spaces of smooth Riemann
surfaces, whose definitions are completely analogous to those of $\h$
and $\hh$ in the previous section.  The DGLAs $\g$ and $\hg$ should be
thought of as ``master'' versions of the $\h$ and $\hh$ associated
with an $A_\infty$-algebra $A$: for any such $A$ there are canonical
maps $\g\ra \h$ and $\hg \ra \hh$.

As in the algebraic case we will describe two DGLAs $\g$ and $\hg$.
In the first one we assume there are no inputs; all outputs are
treated the same.  In the the second one we distinguish inputs from
outputs, and we assume that there is at least one input at all
times. The first construction is due to Sen-Zwiebach~\cite{SenZwi}.
The second construction is in some sense a ``Koszul'' resolution of
the first one, and it first appeared in the unpublished work of
Costello and the authors~\cite{CCT}.

Throughout this section $C_*(-)$ will denote the functor of
normalized singular chains with coefficients in the ground field
$\bbK$ from topological spaces to dg-vector spaces.

\subsection{Moduli spaces of curves and sewing operations}
For $g\geq 0$ and $k,l\geq 0$ satisfying $2g-2+k+l>0$ we consider the
coarse moduli space $M_{g,k,l}^\fr$ of Riemann surfaces of genus $g$,
with $k+l$ marked and framed points, $k$ of which are designated as
inputs, and the rest as outputs.  Explicitly, a point in
$M_{g,k,l}^\fr$ is given by the data $(\Sigma, p_1,\ldots,p_k,
q_1,\ldots,q_l,\phi_1,\ldots,\phi_k,\psi_1,\ldots,\psi_l)$ where
\begin{itemize}
\item $\Sigma$ is a smooth Riemann surface of genus $g$,
\item the $p$'s and $q$'s are distinct marked points on $\Sigma$,
\item the $\phi$'s and $\psi$'s are framings around the marked
points, given by biholomorphic maps
\begin{align*}
  \phi_i: &\, \mathbb{D}^2 \ra U_{\epsilon_i}(p_i),\;\;1\leq i\leq k \\
  \psi_j: &\, \mathbb{D}^2 \ra U_{\epsilon_j}(q_j),\;\; 1\leq j\leq l
\end{align*}
where $\mathbb{D}^2$ stands for the unit disk in $\C$, and
$U_\epsilon(x)$ is a radius $\epsilon$ disk of the marked point $x$ on
$\Sigma$, under, say, the unique hyperbolic metric on
$\Sigma\setminus \{p_1,\ldots,p_k,q_1,\ldots,q_l\}$.  We require that
the biholomorphic maps $\phi$ and $\psi$ extend to an open
neighborhood of $\mathbb{D}^2$.  We also require that the closures of
all the framed disks be disjoint, i.e., for any two distinct points
$x, y\in \{ p_1,\ldots,p_k,q_1,\ldots,q_l\}$ we have
\[\overline{U_{\epsilon_i}(x)} \cap
\overline{U_{\epsilon_j}(y)}=\emptyset.\]
\end{itemize}
Note that the fibration $M_{g,k,l}^\fr\ra M_{g,k,l}$ which forgets the
framing is a homotopy $(S^1)^{k+l}$-bundle. Indeed, for each index
$1\leq i\leq k$ or $1\leq j\leq l$, there is a corresponding circle
action on the framing $\phi_i$ or $\psi_j$ by precomposing with the
rotation map $e^{i\theta}:\mathbb{D}^2\ra \mathbb{D}^2$.

We shall make use of different types of sewing maps of Riemann
surfaces:
\begin{itemize}
\item {\sl (Sewing inputs and outputs.)}
  Fix a positive integer $1\leq r \leq l$. There is a sewing map
  \[ G_r: M_{g',k',l'}^\fr \times M_{g'',k'',l''}^\fr \ra
    M_{g,k,l}^\fr \]
  with $g=g'+g''+r-1$, $k=k'+k''-r$, $l=l'+l''-r$.  This map is
  defined by removing the first $r$ input open disks
  $\phi'_1(\mathbb{D}^2),\ldots,\phi'_r(\mathbb{D}^2)$ from the first
  surface, removing the first $r$ output open disks
  $\psi''_1(\mathbb{D}^2),\ldots,\psi''_r(\mathbb{D}^2)$ from the
  second surface, and then gluing along the boundary circles using the
  coordinates from the framings in such a way that $\phi'_i(S^1)$ is
  identified with $\psi''_i(S^1)$. Applying the functor $C_*(-)$ and
  pre-composing with the Alexander-Whitney map yields a composition
  map which we denote by
  \[ G_r: C_*(M_{g',k',l'}^\fr) \otimes C_*(M_{g'',k'',l''}^\fr) \ra
    C_*(M_{g,k,l}^\fr) \]
  as well.

\item {\sl (Sewing two outputs.)}
  In a similar way, for two indices $1\leq j' \leq l'$ and
  $1\leq j'' \leq l''$ we have a sewing map between two outputs of
  two Riemann surfaces:
  \[ G_{j'j''} : C_*(M_{g',k',l'}^\fr) \otimes C_*(M_{g'',k'',l''}^\fr)
    \ra C_*(M_{g'+g'',k'+k'',l'+l''-2}^\fr). \]
  
\item {\sl (Self-sewing two outputs.)}
  We can also sew two outputs of a single Riemann surface. Indeed, for
  a pair of indices $1\leq j_1<j_2 \leq l$ we have a self-sewing
  map:
  \[ G_{j_1j_2}: C_*(M_{g,k,l}^\fr) \ra C_*(M_{g+1,k,l-2}^\fr).\]
  We have slightly abused the notation for $G$, but no confusion can
  arise in this way.
\end{itemize}
It is worth pointing out that we never sew two inputs together, and we
never self-sew an input to an output. This asymmetry between inputs
and outputs causes considerable difficulty in the theory.  However, it
is at essential in order to avoid later computing traces on infinite
dimensional vector spaces!

\subsection{The Sen-Zwiebach DGLA}
Consider the moduli space $M_{g,0,n}^\fr$ with only outgoing marked
points.  Let $C_*(M_{g,0,n}^\fr)$ be its associated normalized
singular chain complex.  Taking the homotopy quotient by the $n$
commuting circle actions yields the equivariant chain complex
\[ C_*(M_{g,0,n}^\fr)_{(S^1)^n} =
\big(C_*(M_{g,0,n}^\fr)[u_1^{-1},\ldots,u_n^{-1}], \eth=\partial+
\sum_{j=1}^n u_jB_j\big)\]
The symmetric group $S_n$ acts on $C_*(M_{g,0,n}^\fr)_{(S^1)^n} $
diagonally by the induced action on $C_*(M_{g,0,n}^\fr)$ and by
permutation of the indices of the $u$-variables.  Further taking
$S_n$ coinvariants yields
\[ C_*(M_{g,0,n}^\fr)_\hS = C_*(M_{g,0,n}^\fr)_{(S^1)^n \rtimes
    S_n}.\]
We will now describe a DGLA structure on the super
vector space
\[ \g= \big( \bigoplus_{g,n} C_*(M_{g,0,n}^\fr)_\hS \big)
  \series{\hbar,\lambda} [1].\]
We shall refer to this DGLA as the Sen-Zwiebach DGLA,
see~\cite{SenZwi,Cos2}.  Its construction is parallel to that of
the DGLA $\h$ associated to a cyclic $A_\infty$-algebra
in~(\ref{subsec:dgla1}).  Indeed, we define an operator
$\g[1] \ra \g[1]$ of degree one by setting
\[ \Delta (\alpha) = \sum_{1\leq i< j\leq n} G_{ij}\big( \pi_{ij} (
  B_i\alpha)\big),\]
where $G_{ij}$ is induced from the self-sewing map
$M^\fr_{g,0,n} \ra M_{g+1,0,n-2}^\fr$, and the map $\pi_{ij}$ takes an
equivariant chain $\gamma$ to
\[ \pi_{ij} (\gamma) = \gamma \mid_{u^{-1}_i=u^{-1}_j=0}.\]
That is, $\pi_{ij}$ only keeps the constant term of $\gamma$ in the
expansion in the variables $u_i^{-1}$ and $u_j^{-1}$.  In other words,
the definition of $\Delta$ is such that the sewing operation is
only performed when the circle parameters $u_i$ and $u_j$ in $\alpha$
are both power zero.
 
The Lie bracket map, a symmetric degree one map $\g[1] \otimes \g[1]
\ra \g[1]$, is similarly defined. Explicitly, for $\alpha \in
C_*(M_{g',0,n'}^\fr)_\hS$ and $\beta \in C_*(M_{g'',0,n''}^\fr)_\hS$,
we set
\[ \{\alpha,\beta\} = \sum_{\substack{1\leq i\leq n'\\1\leq j\leq
      n''}} G_{ij}\big(\pi_i (B_i\alpha), \pi_j(\beta)\big).\]

\begin{Theorem}
  \label{thm:sz-dgla}
  The triple $\big( \g, \partial+uB+\hbar \Delta, \{-,-\}\big)$ forms a DGLA.
\end{Theorem}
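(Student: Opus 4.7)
The plan is to mirror the construction of $\h_A$ in Section~\ref{subsec:dgla1} and invoke Getzler's theorem~\cite{Get} to produce the DGLA structure from BV data. First I would equip $\bigoplus_{g,n} C_*(M_{g,0,n}^\fr)_\hS\series{\hbar,\lambda}$ with a graded-commutative algebra structure whose product is given by disjoint union of framed Riemann surfaces at the level of moduli, combined with the Eilenberg--Zilber shuffle map on singular chains and symmetrized via the $\hS$-coinvariants. Under this product, a direct check shows that $\{-,-\}$ is precisely the deviation of $\Delta$ from being a derivation, so that once the BV identities below are verified, Getzler's result supplies the Jacobi identity and the Leibniz rule for free. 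Geometrically this is transparent: $\Delta(\alpha\cdot\beta)$ accounts for self-sewings inside $\alpha$, inside $\beta$, and between them, the last of which is exactly $\{\alpha,\beta\}$.

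Next I would verify the three BV-type identities. The relation $(\partial+uB)^2=0$ is immediate from $B^2=0$ and $[\partial,B]=0$ on each $C_*(M_{g,0,n}^\fr)$. For $\Delta^2=0$, two iterated self-sewings at disjoint pairs of outputs commute, and the two orderings are identified after passing to the $\hS$-coinvariants; the projections $\pi_{ij}$ and the circle insertions $B_i$ are placed symmetrically under the swap. For $[\partial+uB,\Delta]=0$, the sewing map $G_{ij}$ is a chain map compatible with $\partial$ via the Alexander--Whitney map, and $\pi_{ij}$ commutes with $\partial$; the circle operators $u_kB_k$ with $k\neq i,j$ commute with $G_{ij}$, while the potentially problematic contributions $u_iB_i$ and $u_jB_j$ are killed by $\pi_{ij}$ up to the term $B_i\alpha$ already built into the definition of $\Delta$. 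Finally, $\Delta$ is a second-order differential operator with respect to the disjoint-union product because a single self-sewing on a disjoint union of three surfaces can involve at most two of them.

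The main obstacle will be the sign bookkeeping, in particular the Koszul signs arising from the circle operators $B_i$, the shuffles induced by the $\hS$-coinvariants, and the chain-level Alexander--Whitney map on the product. I would use the sign diagram formalism of Sheridan~\cite{She}, exactly as in the proof of Theorem~\ref{thm:h-hat}, to track these uniformly and read off pairwise cancellations graphically. A secondary subtlety is the precise role of the projection $\pi_{ij}$: it is essential to avoid summation over infinite expansions in $u_i^{-1}$ and $u_j^{-1}$, but it must be shown compatible with each BV identity. In contrast to the algebraic case, where the corresponding truncations occur automatically upon passing to the Fock space quotient $\cW(C^\Tate)/(u\cdot C_+)_L$, here the truncations are built in by hand and must be tracked through each geometric manipulation.
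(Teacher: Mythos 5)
Your proposal is correct and follows essentially the same route as the paper: the paper's proof likewise reduces the statement to Getzler's result that a dgBV algebra induces a DGLA, with the commutative product given by disjoint union of Riemann surfaces. The BV identities and sign checks you outline are exactly the details the paper leaves implicit.
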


\begin{proof}
  The proof is analogous to the proof of the fact that a dgBV algebra
  induces DGLA structure~\cite{Get}. The commutative algebra structure in
  this case is induced from taking disjoint unions of Riemann surfaces.
\end{proof}

Note further that the DGLA $\g$ admits a homological $\Z$-grading by
setting $\deg \hbar =\deg \lambda =-2$.

\begin{Definition}
\label{defi:vertices}
A degree zero element $\cV\in \g[1]$ of the form
\[ \cV=\sum_{g,n}\cV_{g,n}\hbar^g\lambda^{2g-2+n}, \;\; \cV_{g,n}\in
  C_{6g-6+2n}(M^\fr_{g,0,n})_\hS\]
is called a {\em string vertex} if it satisfies the following properties:
\begin{enumerate}
\item $\cV$ is a Maurer-Cartan element in $\g$; in components
  $\{\cV_{g,n}\}$  this means that we have
  \[ (\partial+uB) \cV_{g,n}+\Delta \cV_{g-1,n+2} +
    \frac{1}{2}\sum_{\substack{g_1+g_2=g\\n_1+n_2=n+2}}
    \{\cV_{g_1,n_1},\cV_{g_2,n_2}\} = 0. \] 
\item $\cV_{0,3}=\frac{1}{6}\cdot {{\sf pt}}$ with ${{\sf pt}} \in
  C_0( M^\fr_{0,0,3})_\hS$ representing a point class.
\end{enumerate}
\end{Definition} 

\noindent
A fundamental result about string vertices is the following theorem
from~\cite{Cos2}.

\begin{Theorem}
\label{thm:costello} A string vertex exists and is unique up to
homotopy (i.e., gauge equivalence between Maurer-Cartan elements).
\end{Theorem}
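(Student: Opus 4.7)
The plan is obstruction theory in the DGLA $\g$: we build $\cV$ inductively so that at each stage the Maurer--Cartan equation determines the next component up to a cycle obstruction, and we kill that obstruction using a geometric exactness input coming from a Deligne--Mumford-type compactification of the moduli space. Isolating $\cV_{g,n}$ in the Maurer--Cartan equation,
\[
(\partial+uB)\cV_{g,n} \;=\; -\Delta\cV_{g-1,n+2} \;-\; \frac{1}{2}\sum_{\substack{g_1+g_2=g\\ n_1+n_2=n+2}}\{\cV_{g_1,n_1},\cV_{g_2,n_2}\} \;=:\; R_{g,n},
\]
every term on the right involves either a component of strictly smaller $g$, or one of the same $g$ but strictly smaller $n$ (because in the bracket the other factor must then be some $\cV_{0,n''}$ with $n''\geq 3$). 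Thus an induction ordered first by $g$ and then by $n$, starting from the prescribed base case $\cV_{0,3}=\frac{1}{6}\cdot{\sf pt}$, defines $R_{g,n}$ entirely in terms of previously constructed data.

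Two properties of $R_{g,n}$ then need verification. First, $R_{g,n}$ is a cycle for $\partial+uB$: this is a formal consequence of the DGLA axioms from Theorem~\ref{thm:sz-dgla} (Jacobi identity, $[\partial+uB,\Delta]=0$, Leibniz for the bracket) applied to the truncated solution at stages preceding $(g,n)$. Second, $R_{g,n}$ must be a boundary for $\partial+uB$ in $C_*(M_{g,0,n}^\fr)_\hS$; this is the essential geometric input. The space $M_{g,0,n}^\fr/\hS$ admits a Deligne--Mumford-type compactification whose boundary strata are described exactly by the self-sewing operations (matching the $\Delta$ term) and the sewing of two surfaces (matching the bracket), with the projections $\pi_{ij}$ encoding the degeneration of framings at a node. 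The complement in this compactification of a small tubular neighborhood of the boundary is then a chain of top degree $6g-6+2n$ whose boundary equals $-R_{g,n}$ after passing to the circle-equivariant, $S_n$-coinvariant quotient; we take this chain to be $\cV_{g,n}$.

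Uniqueness follows by the same strategy: given two string vertices $\cV$ and $\tilde\cV$, we inductively produce a degree $-1$ gauge element $H\in\g$ intertwining them, the obstruction at stage $(g,n)$ being another $(\partial+uB)$-cycle on $M_{g,0,n}^\fr/\hS$ that is killed by the same geometric exactness. The main obstacle throughout is the geometric step: rigorously identifying the Deligne--Mumford boundary with the algebraic sewing operations $\Delta$ and $\{-,-\}$, while tracking the signs, the Koszul contributions absorbed into the $\hS$-coinvariants, and the correspondence between the projections $\pi_{ij}$ (setting $u_i^{-1}=u_j^{-1}=0$) and the alignment of framings at degenerating nodes. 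This identification is the technical core of the Sen--Zwiebach construction and of Costello's rigorous formulation.
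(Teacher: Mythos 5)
Your inductive setup is the right skeleton, and it agrees with the paper's: isolate $R_{g,n}$, check it is a $(\partial+uB)$-cycle by the DGLA identities, and then kill the obstruction. But the paper kills it abstractly: the obstruction to existence is a class in $H_{6g-7+2n}(M^\fr_{g,0,n})_\hS$ and the ambiguity in the solution is measured by $H_{6g-6+2n}(M^\fr_{g,0,n})_\hS$, and \emph{both} groups vanish (the top degree because $M_{g,0,n}$ is a non-compact orbifold of real dimension $6g-6+2n$, the next one down by Harer's bound on the virtual cohomological dimension of the mapping class group); the paper then defers the details to Costello. You instead propose to exhibit $\cV_{g,n}$ explicitly as the complement of a tubular neighborhood of $\partial\bM_{g,0,n}$. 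That is the correct geometric intuition (it is exactly the picture in Section~\ref{sec:example}), but as an induction it is not yet a proof: the identity $\partial(\text{complement}) = -R_{g,n}$ only holds if the lower string vertices appearing in $R_{g,n}$ are themselves realized as tubular-neighborhood complements compatibly with the sewing maps, the boundary of the compactification is a normal-crossings divisor so the codimension-two corners must be accounted for, and the chain-level identification of the boundary strata with the twist-sewing operations (including the framings and the $\pi_{ij}$ truncations) is precisely the hard technical content you defer. None of this is fatal, but it means your existence argument is a program rather than an argument, whereas the homological route needs only the two vanishing statements.

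The genuine gap is in uniqueness. You assert that the obstruction to constructing the gauge equivalence at stage $(g,n)$ "is killed by the same geometric exactness." It is not the same statement. The existence obstruction is the exactness of a specific cycle $R_{g,n}$ in degree $6g-7+2n$; the uniqueness obstruction is the difference $\cV_{g,n}-\tilde\cV_{g,n}$ (after correcting by the gauge data already built at lower stages), which is a $(\partial+uB)$-cycle in the \emph{top} degree $6g-6+2n$, and what you need is that \emph{every} such cycle is a boundary, i.e.\ $H_{6g-6+2n}(M^\fr_{g,0,n})_\hS=0$. The tubular-neighborhood construction gives you no information about this group; the vanishing comes from the non-compactness of the moduli orbifold (or again from Harer). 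Without naming and justifying that vanishing, the uniqueness half of your proof does not close.
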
 

\begin{proof}
  The proof is a standard argument in deformation theory.  The
  obstruction to existence lies in $H_{6g-7+2n}(M^\fr_{g,0,n})_\hS$
  while the deformation space is given by
  $H_{6g-6+2n}(M^\fr_{g,0,n})_\hS$. Both homology groups are known to
  vanish.  We refer to~\cite[Section 9]{Cos2} for a detailed proof.
\end{proof}

\subsection{\texorpdfstring{The Koszul resolution of $\g$}{The Koszul
    resolution of g}}
\label{subsec:hg}

We now describe the second DGLA $\hg$ constructed from moduli spaces
of Riemann surfaces.  Its construction is parallel to that of $\hh$
in~(\ref{subsec:dgla-2}). 

For an integer $k\geq 1$ denote by $\underline{\sgn_k}[-k]$ the rank
one local system over $M_{g,k,l}^\fr$ whose fiber over a Riemann
surface $(\Sigma,
p_1,\ldots,p_k,q_1,\ldots,q_l,\phi_1,\ldots,\phi_k,\psi_1,\ldots,\psi_l)$
is the sign representation of the symmetric group $S_k$ on the set $\{
p_1,\ldots, p_k\}$, shifted by $[-k]$. Denote by
$p_1\wedge\ldots\wedge p_k$ its natural basis vector. For simplicity,
we often write this local system as $\usgn$ when the
integer $k$ is clear from the context.

We shall construct a DGLA structure on the graded vector space
\[ \hg = \bigoplus_{\substack{g,k,l\\ k\geq 1}} C_*\big(M_{g,k,l}^\fr,
  \usgn\big)_\hS\series{\hbar,\lambda}[2],\]
where the homotopy quotient is by $(S^1)^{k+l} \ltimes S_k\times S_l$
with the $S_k$ action twisted by $\usgn$.  The homological
degree of an element
$(\alpha,p_1\wedge\ldots\wedge p_k)\in C_*\big(M_{g,k,l}^\fr,
\usgn\big)_\hS$ is explicitly denoted by
\[ |\alpha|= \deg(\alpha) -k,\]
where $\deg(\alpha)$ is the usual chain degree in the complex
$C_*\big(M_{g,k,l}^\fr)_\hS$, before the shifting by $[k]$ operation.

The complex $C_*\big(M_{g,k,l}^\fr)_\hS$ has the equivariant
differential $\eth= \partial+uB$.  After adding in the local system,
the complex $C_*\big(M_{g,k,l}^\fr,\usgn\big)_\hS$ has
differential
\[ \eth (\alpha,p_1\wedge\ldots\wedge p_k) = (\eth
\alpha,p_1\wedge\ldots\wedge p_k).\]

The self-sewing operator
$\Delta:C _*\big(M_{g,k,l}^\fr,\usgn\big)_\hS \ra
C_*\big(M_{g+1,k,l-2}^\fr,\usgn\big)_\hS$ is defined as
before, by 
\[ \Delta (\alpha,p_1\wedge\ldots\wedge p_k) = \sum_{1\leq i< j\leq
l} \Big( G_{ij}\big( \pi_{ij}(B_i\alpha)\big),p_1\wedge\ldots\wedge
p_k\Big)\]
where $G_{ij}: M^\fr_{g,k,l} \ra M_{g+1,k,l-2}^\fr$ is the self-sewing
map.
    
Define another operator $\iota: C_*\big(M_{g,k,l}^\fr,
\usgn\big)_\hS \ra C_*\big(M_{g,k+1,l-1}^\fr,
\usgn\big)_\hS$ by setting 
\[ \iota \big( \alpha, p_1\wedge\ldots\wedge p_k\big) =
  (-1)^{\deg(\alpha)} \sum_{j=1}^l \big(\iota_j \alpha, q_j \wedge
  p_1\wedge\ldots\wedge p_k \big) \]
where $\iota_j: M^\fr_{g,k,l} \ra M_{g,k+1,l-1}^\fr$ is the map which
simply changes the output vertex $q_j$ to become the first input
vertex and relabels the remaining vertices.

\begin{Lemma}
  We have $(\eth+\iota+\hbar\Delta)^2=0$.
\end{Lemma}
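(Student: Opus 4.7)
The plan is to expand the square of $D = \eth + \iota + \hbar\Delta$ as
\[
D^2 = \eth^2 + [\eth,\iota] + \hbar[\eth,\Delta] + \iota^2 + \hbar[\iota,\Delta] + \hbar^2\Delta^2,
\]
where each bracket is the graded anticommutator of two odd operators, and then verify that each of the six pieces vanishes separately. This mirrors exactly Lemma~\ref{lem:square-zero} for $\hh_A$, with the algebraic maps there replaced by their topological counterparts on $\bigoplus C_*(M_{g,k,l}^\fr,\usgn)_\hS$.

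\textbf{Trivial pieces.} The identity $\eth^2=0$ is immediate from $\partial^2=0$, $B_i^2=0$ and $[\partial,B_i]=0$ (the $B_i$ are the commuting circle actions attached to the marked points). For $[\eth,\iota]=0$, the map $\iota_j$ is a purely combinatorial relabeling that does not touch the underlying Riemann surface; hence it commutes strictly with $\partial$ and with every $B_i$. The sign $(-1)^{\deg(\alpha)}$ built into the definition of $\iota$ is precisely what turns this strict commutation into graded anticommutation.

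\textbf{The two operators squared.} The identity $\iota^2=0$ is the Koszul/wedge cancellation: applying $\iota$ twice produces two nested sums whose terms come in pairs corresponding to the two orders in which a pair of outputs $(q_j,q_{j'})$ is promoted to inputs. The resulting local-system vectors are $q_{j'}\wedge q_j\wedge p_1\wedge\cdots\wedge p_k$ and $q_j\wedge q_{j'}\wedge p_1\wedge\cdots\wedge p_k$, which are opposite in $\usgn$; a bookkeeping check on the $(-1)^{\deg(\alpha)}$ signs shows the cancellation is exact. The identity $\Delta^2=0$ is the analogue of the standard BV argument for the Sen--Zwiebach operator: two applications of the self-sewing $G_{ij}\circ\pi_{ij}\circ B_i$ yield a sum indexed by unordered pairs of unordered pairs of output punctures, and the topological symmetry of iterated self-sewing together with the $S_l$-coinvariance forces pairwise cancellation. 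This is exactly the argument underlying Theorem~\ref{thm:sz-dgla}.

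\textbf{The remaining mixed brackets.} For $[\eth,\Delta]=0$ one uses that the sewing map $G_{ij}$ induces a chain map at the level of normalized singular chains, so $\partial$ passes through $G_{ij}$; the only subtle point is the interaction of the circle actions $B_i$, $B_j$ with the $\pi_{ij}$-projection that keeps only the $u_i^{-1}=u_j^{-1}=0$ component. The projection is designed precisely so that the $u_i$- and $u_j$-equivariance terms produced by $\eth$ either land outside the image of $\pi_{ij}$ or cancel against the $B_i$-term built into $\Delta$; this verifies graded anticommutation. For $[\iota,\Delta]=0$ one observes that $\iota_j$ acts only on the input/output labels while $G_{ij}\circ\pi_{ij}\circ B_i$ sews output punctures; the two operations act on disjoint punctures and commute topologically, and the signs from $\usgn$ and from the $(-1)^{\deg(\alpha)}$ in $\iota$ combine to produce the correct anticommutation.

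\textbf{Main obstacle.} The routine part is the topological commutation of the sewing and relabeling maps; the delicate part is the sign bookkeeping, especially in $[\iota,\Delta]$ and $\iota^2$, where the $\usgn$-twisting, the $[k]$-shift in the degree convention $|\alpha|=\deg(\alpha)-k$, and the explicit sign $(-1)^{\deg(\alpha)}$ in $\iota$ all interact. The cleanest way to keep track of these is to use the sign-diagram formalism of~\cite{She} already adopted in~(\ref{subsec:signs}), exactly as in the proof of Lemma~\ref{lem:square-zero}.
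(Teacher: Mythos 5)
Your proposal is correct and takes essentially the same route as the paper: the paper's proof simply asserts that $[\eth,\iota]=[\iota,\iota]=[\iota,\Delta]=0$ by direct verification from the definitions (the remaining identities $\eth^2=0$, $[\eth,\Delta]=0$, $\Delta^2=0$ being already part of the Sen--Zwiebach DGLA structure), and your six-term expansion with the wedge cancellation for $\iota^2$, the disjoint-puncture argument for $[\iota,\Delta]$, and the relabeling argument for $[\eth,\iota]$ is exactly the intended verification, spelled out in more detail than the paper provides.
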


\begin{proof}
One verifies directly from the definitions that we have
\[[\eth,\iota]=[\iota,\iota]=[\iota,\Delta]=0. \]
\end{proof}

We proceed to define the Lie bracket of $\hg$.  It is a symmetric map
\[ \{-,-\}_\hbar: \hg[1] \otimes \hg[1] \ra \hg[1].\]
Recall that for a positive integer $r\geq 1$ we have a map $G_r$ that
sews the first $r$ inputs of a surface in $M^\fr_{g',k',l'}$ with the
first $r$ outputs of a surface in $M^\fr_{g'',k'',l''}$.  It induces a
map
\[  G_r: C_*(M^\fr_{g',k',l'}) \otimes C_*(M^\fr_{g'',k'',l''}) \ra
  C_*(M^\fr_{g,k,l}),\mbox{ for } l''\geq r, \; k'\geq r. \]
More generally, for a pair of multi-indices $J'\subset
K'=\{1,\ldots,k'\}$ and $I''\subset L''=\{1,\ldots,l''\}$, both of size $r$,
there is a sewing map
\[ G_{J'I''} (\alpha,\beta)= G_r ( \sigma_{J',K'\setminus J'} \alpha,
  \sigma_{I'',L''\setminus I''} \beta ),\]
where $\sigma_{J',K'\setminus J'}$ and $ \sigma_{I'',L''\setminus I''}$ are shuffle
permutations of $K'$ and $L''$ respectively.

Define a twisted version
\[ G^B_{r}: C_*(M^\fr_{g',k',l'},\usgn) \otimes
C_*(M^\fr_{g'',k'',l''},\usgn) \ra C_*(M^\fr_{g,k,l},
\usgn),\] by setting
\begin{align*}
  &G^B_{r}\big( (\alpha,p'_1\wedge\cdots\wedge p'_{k'}
    ),(\beta,p''_1\wedge\cdots\wedge p''_{k''}) \big)\\
  =& (-1)^{\deg(\beta)(k'+r)}\big( G_{r}( \alpha,B_{q''_1}\cdots B_{q''_r}
     \beta),p'_{r+1}\wedge\cdots p'_{k'}\wedge p''_{1}\wedge\cdots\wedge
     p''_{k''}\big).
\end{align*}
Here $B_{q_i''}(\beta)$ is the map on chains induced by the circle
operator rotating the framing of the $i$-th output of a Riemann
surface parametrized by $\beta$.  The sign is chosen to match with the
algebraic construction of~(\ref{subsec:dgla-2}), see
Proposition~\ref{prop:universal}. 
 
We can further extend this map to a map $\widetilde{G^B_r}$ of
homotopy quotients by the circle actions, using the formula
\[ \widetilde{G^B_r} (\Psi,\Phi) = G^B_r
(\Psi\mid_{w_1^{-1}=\cdots=w_r^{-1}=0},
\Phi\mid_{u_1^{-1}=\cdots=u_r^{-1}=0}).\]
Here the $w$'s are circle parameters at inputs while the $u$'s are
circle parameters at outputs. In other words we only apply the twisted sewing
map $G_r^B$ when the circle parameters all have power zero at the
points to be sewed

\begin{Lemma}
The twisted sewing map
\[ \widetilde{G^B_r}:  C_*\big(M_{g',k',l'}^\fr,\usgn
  \big)_{(S^1)^{k'+l'}}  \otimes
  C_*\big(M_{g'',k'',l''}^\fr,\usgn \big)_{(S^1)^{k''+l''}}
  \ra C_*\big(M_{g,k,l}^\fr,\usgn\big)_{(S^1)^{k+l}}\]
is a chain map.
\end{Lemma}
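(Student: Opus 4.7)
The plan is to verify the chain-map identity
\[
\eth\,\widetilde{G^B_r}(\Psi,\Phi) \;=\; \widetilde{G^B_r}(\eth\Psi,\Phi) + (-1)^{|\Psi|}\,\widetilde{G^B_r}(\Psi,\eth\Phi)
\]
by decomposing each differential $\eth = \partial + \sum_i w_i B_{p_i} + \sum_j u_j B_{q_j}$ into the topological boundary and the circle-action terms at each marked point. The $\partial$-part matches on both sides because $G_r$ is induced from a continuous map of moduli spaces and the $B$-operators come from continuous $S^1$-actions; moreover the projections $\pi_w, \pi_u$ commute with $\partial$. The circle-action pieces at \emph{non-sewn} marked points (inputs $p'_i$ with $i > r$, inputs $p''_j$, and outputs $q'_j$ and $q''_j$ with $j > r$) commute with $\pi$ and pass through $G^B_r$ to yield the corresponding circle actions on the sewn surface, so these pieces also match.

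The crux lies in the circle actions at the \emph{sewn} marked points, where $\pi$ and $\eth$ fail to commute. Expanding $\Psi = \sum_{k \geq 0} \Psi^{(k)}_i\, w_i^{-k}$ in each of the first $r$ input framing variables (and similarly for $\Phi$ in the first $r$ output framing variables), a direct computation yields
\[
\pi_w\bigl(w_i B_{p'_i} \Psi\bigr) = B_{p'_i}\,\Psi^{(1)}_i, \qquad \pi_u\bigl(u''_j B_{q''_j}\Phi\bigr) = B_{q''_j}\,\Phi^{(1)}_j.
\]
These produce extra ``correction'' contributions to the right-hand side of the chain-map identity, which must therefore vanish. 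The $\Phi$-side corrections vanish immediately: by the definition of $G^B_r$, each such term has the form $\pm\,G_r\bigl(\pi_w\Psi,\,B_{q''_j}B_{q''_1}\cdots B_{q''_r}\,\Phi^{(1)}_j\bigr)$ with $1 \leq j \leq r$, so $B_{q''_j}$ appears twice in the composition and the term is zero by $B^2 = 0$.

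For the $\Psi$-side corrections I would invoke the geometric identity
\[
G_r\bigl(B_{p'_i}\alpha,\,\beta\bigr) \;=\; \pm\,G_r\bigl(\alpha,\,B_{q''_i}\beta\bigr), \qquad 1 \leq i \leq r,
\]
which transfers the $B$-action across the sewn circle. Intuitively, rotating the input framing at $p'_i$ through $\theta\in S^1$ and sewing produces the same $1$-parameter family of sewn surfaces as rotating the output framing at $q''_i$ through $-\theta$ and sewing; the two families coincide up to the orientation-reversing involution of $S^1$, which accounts for the sign. Once this identity is applied, the $\Psi$-side corrections reduce to the previous case and vanish by $B^2 = 0$ as well. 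The main obstacle is establishing this geometric identity rigorously at the chain level (not merely up to homotopy), which requires a careful analysis of the framed sewing map $G_r$; with it in hand, the rest is a matter of tracking Koszul signs arising from the local system $\usgn$, the prefactor $(-1)^{\deg(\beta)(k'+r)}$ in the definition of $G^B_r$, and the various degree shifts, and checking that the argument descends to the $(S^1)^{k+l}$ homotopy quotient, which is automatic since the circle actions at the sewn points have already been accounted for by the twist operators $B_{q''_j}$.
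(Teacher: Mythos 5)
Your proof is correct, and its core mechanism is the same as the paper's: after restricting to zero powers of the circle parameters, the term $\pi_u\bigl(u_j B_{q_j''}\Phi\bigr)$ lands in the image of $B_{q_j''}$, which is then annihilated by the twist operators $B_{q_1''}\cdots B_{q_r''}$ built into $G^B_r$, via $B^2=0$. Where you differ is in completeness: the paper's proof treats only the circle actions at the sewn \emph{outputs} $q_j''$ of the second surface (its phrase ``for circle parameters not involved in the sewing, this is again clear'' does not cover the sewn inputs $p_1',\dots,p_r'$ of the first surface, whose circle terms $w_iB_{p_i'}\Psi$ also appear in $\eth\Psi$ and have no counterpart in the target). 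You correctly identify that these terms require a separate argument, and you supply one: the chain-level transfer identity $G_r(B_{p_i'}\alpha,\beta)=\pm G_r(\alpha,B_{q_i''}\beta)$, which reduces the input case to the output case and again kills the term by $B^2=0$. That identity is geometrically sound — rotating the input framing before gluing and rotating the matching output framing produce the same one-parameter family of twisted gluings, up to the orientation reversal of $S^1$ — and your instinct that it must be verified on the nose (not up to homotopy) is right, since the failure term must vanish exactly. So your argument buys a genuinely more complete proof at the cost of one additional geometric lemma that the paper leaves implicit; I see no gap beyond the (acknowledged, and routine) verification of that lemma and the sign bookkeeping.
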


\begin{proof}
The twisted sewing map $\widetilde{G^B_r}$ clearly
commutes with the geometric boundary map $\partial$. Since
$\eth=\partial + u B$, it remains to prove its compatibility with the
circle actions. For circle parameters not involved in the sewing, this
is again clear. Let us consider the action by $u B_{q''_1}$. For an
equivariant chain $\Phi$ we have
\[ (u_1 B_{q''_1} \Phi )|_{u_1^{-1}=0} \in \imag (B_{q''_1}).\] This
shows that $ \widetilde{G^B_r} ( \Psi, u B_{q''_1} \Phi ) =0$ since
$B_{q''_1} B_{q''_1}=0$.
\end{proof}

Symmetrizing along the $r$ inputs and outputs yields our desired
composition map
\[ \circ_r : C_*\big(M_{g',k',l'}^\fr,\usgn\big)_\hS
\otimes C_*\big(M_{g'',k'',l''}^\fr,\usgn\big)_\hS \ra
C_*\big(M_{g,k,l}^\fr,\usgn\big)_\hS\] explicitly given by
\[ \Psi\circ_r \Phi =
\sum_{\sigma}\sum_{\tau} \widetilde{G^B_r} (\sigma \Psi,\tau
\Phi). \]
where the summation is over $\sigma\in {\sf
Shuffle}(r,k'-r)$ and $\tau\in {\sf Shuffle}(r,l''-r)$ which act by
permutations of $\{p_1',\ldots,p_{k'}'\}$ and
$\{q''_1,\ldots,q''_{l''}\}$ respectively.

\begin{Theorem}
  \label{thm:sz-koszul}
  Define an odd symmetric map $\{-,-\}_\hbar: \hg[1] \otimes \hg[1]
  \ra \hg[1]$ by 
  \[ \{\Psi,\Phi\}_\hbar= (-1)^{|\Psi|} \sum_{r\geq 1} \big(
    \Psi\circ_r\Phi - (-1)^{|\Psi||\Phi|}\Phi\circ_r \Psi\big)\cdot
    \hbar^{r-1},\]
  with the degrees $|\Psi|$, $|\Phi|$ computed in $\hg$.  The sign
  $(-1)^{|\Psi|}$ is due to our convention to work with shifted signs
  of DGLAs.

  Then the triple
  $\big(\hg, \eth+\iota+\hbar \Delta, \{-,-\}_\hbar\big)$ forms a
  $\Z$-graded DGLA.
\end{Theorem}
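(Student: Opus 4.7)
The plan is to mirror the proof of Theorem~\ref{thm:h-hat} for $\hh_A$ almost verbatim, replacing algebraic operations on Hochschild chains by their geometric counterparts on chains of framed moduli spaces. The construction of $\hg$ has been engineered so that every sign diagram used in Theorem~\ref{thm:h-hat} has a geometric interpretation: sewing two outputs plays the role of the BV operator $\Delta$, relabelling an output as an input plays the role of $\iota$, and the twisted sewing $\widetilde{G^B_r}$ plays the role of the non-associative composition $\circ_r$ built from the circle action map $\Theta$. The $S_k$ twist by the local system $\usgn$ supplies exactly the signs that came from the shifted symmetric powers $\Sym^k(L_+[1])$ in the algebraic case.

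The squared-to-zero property of $\eth + \iota + \hbar\Delta$ is already recorded in the lemma preceding the theorem, so it remains to verify the graded Leibniz rule and the Jacobi identity. For Leibniz I would first check, by inspection, that $\eth = \partial + uB$ is a derivation of $\{-,-\}_\hbar$; this is essentially the statement that $\partial$ and the $uB_i$'s commute with the twisted sewing maps $\widetilde{G^B_r}$, after observing that the projection $|_{u^{-1}=0}$ is compatible with the identity $uB \circ (B \cdot) = 0$ used already in the proof that $\widetilde{G^B_r}$ is a chain map. Next I would show that, although neither $\iota$ nor $\Delta$ is a derivation of $\{-,-\}_\hbar$ individually, their sum is. The argument is exactly the one indicated by the two sign diagrams in the proof of Theorem~\ref{thm:h-hat}: the failure of $\iota$ to be a derivation at level $r$ produces a term in which one of the sewing strands is ``absorbed'' into an input, and this term is cancelled by the failure of $\hbar\Delta$ to be a derivation at level $r-1$, where the same strand is instead self-sewn. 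Matching signs and shuffle coefficients reduces to an isotopy of sign diagrams, interpreted on moduli spaces via the fact that relabelling an output as an input and then sewing it to another output is homotopic to first self-sewing and then relabelling, modulo the Koszul sign introduced by $\usgn$.

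For the Jacobi identity I would list the twelve types of compositions appearing in
\[
\{\{\Psi,\Phi\}_\hbar,\Upsilon\}_\hbar + (-1)^{|\Upsilon|'|\Phi|'}\{\{\Psi,\Upsilon\}_\hbar,\Phi\}_\hbar + (-1)^{|\Psi|'|\Phi|'+|\Psi|'|\Upsilon|'}\{\{\Phi,\Upsilon\}_\hbar,\Psi\}_\hbar
\]
according to the combinatorial pattern of which sewings land between which two of the three surfaces. Each type pairs with exactly one other type in which the order of the two nested brackets is swapped; the representative sign diagram computation already carried out in Theorem~\ref{thm:h-hat} then applies verbatim, with Hochschild contraction replaced by framed sewing and with the local system $\usgn$ accounting for the shifted-sign convention. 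Finally the $\Z$-grading is immediate by declaring $\deg\hbar = \deg\lambda = -2$, since the geometric degree of $\cV_{g,k,l}$-type chains plus $2(2g-2+k+l)$ is preserved by $\partial$, $uB$, $\iota$, $\Delta$, and by $\circ_r$ (each sewing drops the real dimension by $2r$ and is compensated by the factor $\hbar^{r-1}\lambda^{2r-2}$ implicit in $\{-,-\}_\hbar$).

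The main obstacle is sign bookkeeping. The subtle point is that the twist by $\usgn$, the shuffle signs $\sigma, \tau$, the factor $(-1)^{\deg(\beta)(k'+r)}$ in $G_r^B$, and the outer $(-1)^{|\Psi|}$ in the bracket all conspire to produce exactly the same signs as in the purely algebraic proof of Theorem~\ref{thm:h-hat}; I expect that once this dictionary is written down explicitly (essentially the content of Proposition~\ref{prop:universal} referenced in the text), the Leibniz and Jacobi verifications become transcriptions of diagrams already drawn. The only genuinely geometric input beyond sign-chasing is the compatibility of $\widetilde{G^B_r}$ with the equivariant differential $\eth$, which has already been established.
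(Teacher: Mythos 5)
Your proposal is correct and takes essentially the same route the paper intends: the paper's entire proof of this theorem is the single line ``analogous to the proof of Theorem~\ref{thm:h-hat}, omitted,'' and your translation of that proof's three steps (the $\eth$-Leibniz rule via the chain-map property of $\widetilde{G^B_r}$, the cancellation between the level-$r$ failure of $\iota$ and the level-$(r-1)$ failure of $\hbar\Delta$, and the pairwise cancellation of the twelve composition types in the Jacobi identity) into the geometric language of relabelling and twisted sewing is exactly the intended argument. The only slip is the parenthetical degree count at the end: the bracket carries only $\hbar^{r-1}$ and no $\lambda^{2r-2}$ --- the $\lambda$-exponent $2g-2+k+l$ is already additive under the $r$-fold sewing $g=g_1+g_2+r-1$, $k=k_1+k_2-r$, $l=l_1+l_2-r$ --- but this does not affect the homogeneity claim.
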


\begin{proof}
  This is analogous to the proof of Theorem~\ref{thm:h-hat} and
  will be omitted.
\end{proof}

\begin{Proposition}
  There exists a quasi-isomorphism of DGLAs
  \[ \biota: \g^+ \ra \hg, \]
  where 
  \[ \g^+= \big( \bigoplus_{g,n\geq 1} C_*(M_{g,0,n}^\fr)_\hS \big)
    \series{\hbar,\lambda} [1] \]
  is endowed with the quotient DGLA structure of $\g$ in a matter
  similar to that of $\h$, see~(\ref{subsec:Koszul}).
\end{Proposition}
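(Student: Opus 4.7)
The plan is to mimic, step by step, Lemma~\ref{lem:biota} from the algebraic setting. First I would define $\biota: \g^+ \ra \hg$ by $\biota(\alpha) = (-1)^{|\alpha|}\iota(\alpha)$, extended $\hbar,\lambda$-linearly, where $\iota$ is the operator introduced in~(\ref{subsec:hg}); note that on $C_*(M_{g,0,n}^\fr)_\hS$ the local system $\usgn$ is trivial, so the formula makes sense. Next I would verify that $\biota$ is a DGLA morphism. The chain-map condition $[\eth+\iota+\hbar\Delta,\,\biota] = 0$ is an immediate consequence of the identities $[\eth,\iota]=[\iota,\iota]=[\iota,\Delta]=0$ already established in Section~\ref{subsec:hg}, together with the usual sign matching coming from the $(-1)^{|\alpha|}$ factor. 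Compatibility with the Lie brackets amounts to a direct combinatorial check: after applying $\iota$, the twisted composition $\circ_r$ on $\hg$ recovers the Sen--Zwiebach sewing on the outputs of $\g^+$, and the verification is a sign-and-shuffle chase structurally parallel to the Leibniz calculations in Theorem~\ref{thm:h-hat} and Theorem~\ref{thm:sz-koszul}.

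The main content is showing $\biota$ is a quasi-isomorphism. Imitating Lemma~\ref{lem:biota}, it suffices to show that for each $g\geq 0$ and each $n\geq 1$, the finite sequence
\[ 0 \ra C_*(M_{g,0,n}^\fr)_\hS \stackrel{\biota}{\ra} C_*(M_{g,1,n-1}^\fr,\usgn)_\hS \stackrel{\iota}{\ra} \cdots \stackrel{\iota}{\ra} C_*(M_{g,n,0}^\fr,\usgn)_\hS \ra 0 \]
is acyclic as a complex of dg-vector spaces.

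The key geometric observation is that the forgetful map $M_{g,k,l}^\fr \ra M_{g,n}^\fr$ which forgets the partition of the $n$ marked points into inputs and outputs (but remembers their ordering) is a homeomorphism whenever $k+l=n$. Setting $X := C_*(M_{g,n}^\fr)_{(S^1)^n}$, viewed as a dg $\bbK[S_n]$-module, Frobenius reciprocity together with the standard isomorphism $\mathrm{Ind}_{S_k\times S_{n-k}}^{S_n}(\sgn_k\otimes\bone) \cong \Lambda^k\bbK^n$ yields natural identifications
\[ C_*(M_{g,k,n-k}^\fr,\usgn)_\hS \cong (X\otimes \Lambda^k\bbK^n)_{S_n}. \]
Under these identifications, $\iota$ (and $\biota$ at the leftmost position) becomes, up to sign, wedging with $\omega := e_1+\cdots+e_n$. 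The complex $(\Lambda^\bullet\bbK^n,\, \omega\wedge -)$ is a standard acyclic complex of $S_n$-representations for every $n\geq 1$: since $\omega\neq 0$, a change of basis reduces it to the contractible one-variable Koszul complex tensored with an exterior algebra. Tensoring with $X$ and taking $S_n$-coinvariants preserves acyclicity in characteristic zero, finishing the argument.

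I expect the bracket-compatibility check in the first step to be the main obstacle, since it requires carefully matching the composition $\circ_r$ on $\hg$ (which applies $B$ at $r$ output slots before sewing) against the Sen--Zwiebach bracket precomposed with $\iota$. Once signs and shuffle factors are tabulated this is routine, but it is by far the longest calculation. By contrast, the acyclicity step is essentially formal once the Frobenius-reciprocity identification is in place.
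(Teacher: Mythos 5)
Your proposal is correct and follows essentially the same route as the paper: reduce to the observation that the spaces $M^\fr_{g,k,l}$ with $k+l=n$ fixed are all isomorphic, and then invoke exactness of the sign-twisted Koszul complex of $S_n$-coinvariants. The only difference is that the paper simply asserts this algebraic exactness as a known fact, whereas you supply its proof via the Frobenius-reciprocity identification with $(X\otimes\Lambda^k\bbK^n)_{S_n}$ and wedging with $\omega=e_1+\cdots+e_n$ — a welcome elaboration, not a different approach.
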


\begin{proof}
Observe that for fixed $g\geq 0$ and $n>0$ the various
moduli spaces $M^\fr_{g,k,l}$ with $k+l=n$ are all isomorphic. Then
the result follows from the algebraic fact that if $W$ is any
$\Sigma_n$-representation, the associated Koszul complex
\[ 0\ra W_{\Sigma_n} \ra W_{\Sigma_{1,n-1}} \ra
  W_{\Sigma_{2,n-2}}\ra \cdots\ra W_{\Sigma_{n,0}}\ra 0\]
is exact.  Here $W_{\Sigma_{k,l}}$ denotes the space of coinvariants
of $W$ under the action of $\Sigma_k \times \Sigma_l$, where the
first group acts via the sign representation $\sgn_k$ and the second
one acts via the trivial representation.
\end{proof}

\subsection{Combinatorial string vertices}
When $k\geq 1$ and $l\geq 0$ there exists a version of
$C_*(M^\fr_{g,k,l})$ constructed in terms of ribbon graphs with framed
inputs and outputs (also sometimes called fat graphs with black and
white vertices).  This construction is due to
Kontsevich-Soibelman~\cite{KonSoi}, Costello~\cite{Cos1} and
Wahl-Westerland~\cite{WahWes}.  Denote this quasi-isomorphic chain
complex by $C_*^\comb(M_{g,k,l}^\fr)$; it is spanned by isomorphism
classes of framed and oriented ribbon graphs of genus $g$ with $k$
faces and $l$ white vertices.  We also have combinatorial versions
of the operators $\partial$, $B$, $\Delta$, and $\{-,-\}_\hbar$.  Thus,
by the same construction of the previous subsection, we can define a
combinatorial version of $\hg$, denoted by:
\[ \displaystyle{\hg^{\comb}=\left (\bigoplus_{g, k\geq 1, l}
C^{\comb}_*(M_{g,k,l}^\fr,\usgn)_\hS \right )
\series{\hbar,\lambda}}[2].\]
It also forms a DGLA.

\begin{Theorem}
  There exists a roof diagram of quasi-isomorphisms of DGLAs:
  \[\g^+ \stackrel{\biota}{\longrightarrow} \hg \longleftarrow
    \hg^{\comb}.\]
\end{Theorem}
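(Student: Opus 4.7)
The plan is to build the roof by combining the Proposition already proved, which gives the left arrow $\biota: \g^+ \to \hg$, with a construction of the right arrow $\hg^{\comb} \to \hg$ that respects all of the DGLA data. The left arrow requires no further work, so the entire proof reduces to producing a DGLA quasi-isomorphism from the combinatorial side.

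First, I would invoke the Kontsevich--Soibelman / Costello / Wahl--Westerland comparison: for each triple $(g,k,l)$ with $k\geq 1$, there is an explicit chain map $\phi_{g,k,l}: C^{\comb}_*(M_{g,k,l}^\fr) \to C_*(M_{g,k,l}^\fr)$ that is a quasi-isomorphism and that is $S_k\times S_l$-equivariant as well as compatible with the $k+l$ commuting circle actions (the latter being encoded combinatorially by rotating the framings at black and white vertices). Twisting by the local system $\usgn$ on the $k$ input points and passing to the homotopy quotient by $(S^1)^{k+l} \ltimes (S_k \times S_l)$ is a formal operation that preserves quasi-isomorphisms in characteristic zero, so we obtain, after adjoining $\hbar$ and $\lambda$ and shifting by $[2]$, a chain map
\[
\Phi: \hg^{\comb} \longrightarrow \hg
\]
that is a quasi-isomorphism of complexes with respect to $\eth = \partial + uB$.

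Next I would check that $\Phi$ intertwines each of the remaining structure maps. The self-sewing operator $\Delta$, the input/output conversion $\iota$, and the twisted sewing maps $\circ_r$ used in $\{-,-\}_\hbar$ all have purely combinatorial analogues (sewing two white vertices into a new interior annulus for $\Delta$, turning a white vertex into a black one for $\iota$, and grafting ribbon graphs along matched collections of white/black vertices with twisted framings for $\circ_r$), and by construction these operations commute with the chain comparison $\phi_{g,k,l}$ up to the standard Alexander--Whitney corrections, which vanish after passing to $\hS$-coinvariants of the circle actions. This is the step that will require the most bookkeeping: one has to verify that the combinatorial operators $\partial^{\comb}$, $B^{\comb}$, $\Delta^{\comb}$, $\iota^{\comb}$, and $\circ_r^{\comb}$ were defined precisely so as to match the geometric ones under $\phi$, and that the twisting by $\usgn$ plus the projection onto the $\pi_{ij}$, $\pi_j$ terms (power-zero in the circle parameters at the sewed points) goes through the comparison. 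Granted these compatibilities, $\Phi$ is a DGLA morphism.

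Finally, since $\Phi$ is already a quasi-isomorphism of the underlying chain complexes of DGLAs and is a morphism of DGLAs, it is a quasi-isomorphism of DGLAs in the ordinary sense used throughout the paper. Combined with the previous Proposition, we obtain the desired roof
\[
\g^+ \stackrel{\biota}{\longrightarrow} \hg \stackrel{\Phi}{\longleftarrow} \hg^{\comb},
\]
completing the proof. The main obstacle is the last compatibility check for $\{-,-\}_\hbar$: because the bracket involves the twisted sewing $\widetilde{G_r^B}$, which inserts a Connes operator $B_{q_i''}$ at each sewed output and then restricts to the power-zero component in the circle parameters, one must verify that the combinatorial model of these insertions (rotating white-vertex framings before grafting, and projecting onto $(S^1)^r$-invariant chains) agrees with the geometric one after passing to homotopy coinvariants; this is where the careful sign conventions in the definition of $G_r^B$ are needed to match the algebraic side.
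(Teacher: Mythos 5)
Your proposal follows essentially the same route as the paper: the left arrow is taken from the earlier Proposition, and the right arrow is obtained from the combinatorial-versus-geometric comparison of chain models of $M_{g,k,l}^\fr$, with the crux being that the combinatorial sewing operations match the geometric ones. The paper disposes of exactly that compatibility check (which you correctly flag as the main obstacle) by citing Egas's theorem that the Wahl--Westerland combinatorial sewing maps agree with the geometric sewing maps, rather than verifying it by hand.
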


\begin{proof}
  The left arrow is the morphism $\biota$ in
  Theorem~\ref{thm:sz-koszul}.  The right arrow is a direct
  consequence of a result proved by Egas~\cite{Ega}, that the
  combinatorial sewing maps defined in~\cite{WahWes} between ribbon
  graphs agree with the geometric sewing maps.
\end{proof}

By the homotopy invariance of Maurer-Cartan moduli spaces,
Theorem~\ref{thm:costello} implies the following result.

\begin{Theorem}
  \label{thm:comb-vertex}
  In the DGLA
 \[ \hg^{{\comb}}= \left ( \bigoplus_{g, k\geq 1, l}
     C^{\comb}_*(M_{g,k,l}^\fr,\usgn) \right )
   \series{\hbar,\lambda} \]
 there exists a degree $-1$ element
 $\hcV^\comb$, unique up to homotopy, of the form
 \[\hcV^\comb=\sum_{g, k\geq 1, l} \hcV^\comb_{g,k,l}\,\hbar^g
   \lambda^{2g-2+k+l}, \] such that the following conditions are
 satisfied:
 \begin{enumerate}
 \item $\hcV^\comb$ is a Maurer-Cartan element of $\hg^{{\comb}}$,
   i.e., $\hcV^\comb$ satisfies the following equations for each triple
   $(g,k\geq 1,l)$:
   \[ (\partial+uB) \hcV^\comb_{g,k,l}+\Delta\hcV^\comb_{g-1,k,l+2} +
     \iota\hcV^\comb_{g,k-1,l+1} +
     \frac{1}{2}\sum\frac{1}{r!}\{\hcV^\comb_{g_1,k_1,l_1},\hcV^\comb_{g_2,k_2,l_2}\}_r=0.\]
   The last sum is over all $r\geq 1$ and all $(g_1, g_2, k_1, k_2, l_1,
   l_2)$ such that
   \begin{align*}
     g_1+g_2 +r -1 & = g\\
     k_1+k_2 -r & = k\\
     l_1+l_2 -r & = l.
   \end{align*}
   
 \item\[ \hcV^\comb_{0,1,2} =\frac{1}{2}\;\;\;
     \begin{tikzpicture}[baseline={([yshift=-2ex]current bounding
         box.center)},scale=0.3] \draw [thick] (0,0) to (0,2); \draw [thick]
       (-0.2, 1.8) to (0.2, 2.2); \draw [thick] (0.2, 1.8) to (-0.2, 2.2);
       \draw [thick] (0,0) to (-2,0); \draw [thick] (0,0) to (2,0); \draw
       [thick] (-2.2,0) circle [radius=0.2]; \draw [thick] (2.2,0) circle
       [radius=0.2];
     \end{tikzpicture} \]
   corresponding to one-half times a point class in
   $C_0^\comb(M_{0,1,2}^\fr)_\hS$. 
 \end{enumerate}
\end{Theorem}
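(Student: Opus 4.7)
The strategy is to transport Costello's string vertex $\cV \in \g$ (Theorem~\ref{thm:costello}) across the roof of quasi-isomorphisms
\[ \g^{+} \stackrel{\biota}{\lra} \hg \longleftarrow \hg^{\comb} \]
provided by the previous theorem, and then normalize so that the $(0,1,2)$-component takes the prescribed value. The first step is to observe that the center $\bbK\series{\hbar,\lambda}[1]$ does not contribute to the Maurer-Cartan equation, so the image $\cV^{+}$ of $\cV$ in the quotient $\g^{+}$ is again a Maurer-Cartan element. Applying the DGLA map $\biota$ produces a Maurer-Cartan element $\biota(\cV^{+}) \in \hg$.

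The main content is then lifting this element along the quasi-isomorphism $\hg^{\comb} \ra \hg$. I would invoke the standard homotopy invariance of Maurer-Cartan moduli sets under quasi-isomorphisms of (pro-nilpotent) DGLAs, applied filtration-wise in the formal variables $\hbar$ and $\lambda$: the filtration by total $\lambda$-degree makes both $\hg$ and $\hg^{\comb}$ pro-nilpotent, so a quasi-isomorphism between them induces a bijection on $\MC/\text{gauge}$. This yields an element $\hcV^{\comb}$ of $\hg^{\comb}$ whose image in $\hg$ agrees, up to gauge, with $\biota(\cV^{+})$, and which satisfies the Maurer-Cartan equation expanded component-wise as in the statement. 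Uniqueness up to homotopy comes from the same bijection, combined with the uniqueness clause of Theorem~\ref{thm:costello}.

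The remaining task is to verify the normalization $\hcV^{\comb}_{0,1,2} = \tfrac12 \cdot \sbt$. At the level of top moduli components, the lowest-order piece of the Maurer-Cartan equation reduces to $\iota \hcV^{\comb}_{0,1,2} = 0$ and determines $\hcV^{\comb}_{0,1,2}$ modulo gauge from the image of $\cV_{0,3} = \tfrac{1}{6}\cdot\sbt$ under the Koszul resolution map. A direct count tracks the combinatorial factor: the map $\biota$ distinguishes one of the three outputs as an input, and after passing to coinvariants under $S_1 \times S_2$ rather than $S_3$ the coefficient becomes $3 \cdot \tfrac{1}{6} = \tfrac{1}{2}$. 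Since any other gauge representative in the relevant degree differs by a cycle in a chain group whose homology vanishes (again by Costello~\cite[Section 9]{Cos2}), the representative of $\hcV^{\comb}_{0,1,2}$ is pinned down to the displayed combinatorial point class.

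The step I expect to be the main obstacle is making the transfer in the middle paragraph rigorous, since the quasi-isomorphism $\hg^{\comb} \ra \hg$ is only a chain map between large DGLAs, not a strict DGLA map; the bijection on $\MC$-sets then requires choosing an $L_\infty$-quasi-inverse. The cleanest route is to apply the Dolgushev-type obstruction argument to the filtration by $\lambda$-weight and observe inductively that the obstruction to lifting each new component of $\hcV^{\comb}$ sits in a homology group of $C_*^{\comb}(M^{\fr}_{g,k,l}, \usgn)_\hS$ that matches, under the roof, the corresponding group of $C_*(M^{\fr}_{g,0,n})_\hS$ shown to vanish in~\cite[Section 9]{Cos2}. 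This reduces both existence and uniqueness to the same vanishing results that underlie Theorem~\ref{thm:costello}.
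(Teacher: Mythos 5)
Your proposal follows the paper's own argument: the paper deduces this theorem in a single line from Theorem~\ref{thm:costello} together with the homotopy invariance of Maurer--Cartan moduli spaces under the roof of quasi-isomorphisms $\g^+ \stackrel{\biota}{\lra} \hg \longleftarrow \hg^{\comb}$, which is exactly the transfer you spell out (including the correct count $3\cdot\tfrac16=\tfrac12$ for the normalization of the $(0,1,2)$-component). The only quibble is your side remark that the lowest-order Maurer--Cartan equation reads $\iota\hcV^{\comb}_{0,1,2}=0$ --- for $(g,k,l)=(0,1,2)$ the equation is just $(\partial+uB)\hcV^{\comb}_{0,1,2}=0$, the term $\iota\hcV^{\comb}_{0,1,2}$ appearing instead in the $(0,2,1)$ equation --- but this does not affect the argument.
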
 

\begin{Definition}
  We shall refer to $\hcV^\comb_{g,k,l}$ as {\em combinatorial string
    vertices}. Each of them is a $\bbQ$-linear combination of ribbon
  graphs. 
\end{Definition}

Combinatorial string vertices can be explicitly computed by solving
increasingly large linear systems of equations: the Maurer-Cartan
equation determines $\cV_{g,k,l}$ uniquely (up to $(\del+uB)$-exact
elements) from previously computed string vertices.

As examples, we present explicit formulas for the first few
combinatorial string vertices, ignoring orientations of ribbon graphs
and associated signs.  We follow the conventions of~\cite{WahWes} when
drawing ribbon graphs, except that we also add a cross at the end of
the unique leaf associated to an input cycle of the ribbon
graph. Since each string vertex is an equivariant chain, our
convention, with respect to the circle parameter, is that if we do not
indicate a power of $u$, then the corresponding face or output is
labeled by $u^{0}$.
\begin{align*} \hcV^\comb_{0,1,2}& =\frac{1}{2}\;\;\;
  \begin{tikzpicture}[baseline={([yshift=-1.2ex]current bounding
box.center)},scale=0.3] \draw [thick] (0,0) to (0,2); \draw [thick]
(-0.2, 1.8) to (0.2, 2.2); \draw [thick] (0.2, 1.8) to (-0.2, 2.2);
\draw [thick] (0,0) to (-2,0); \draw [thick] (0,0) to (2,0); \draw
[thick] (-2.2,0) circle [radius=0.2]; \draw [thick] (2.2,0) circle
[radius=0.2];
\end{tikzpicture}
\end{align*}

\[
  \hcV^\comb_{1,1,0}=\frac{1}{24}\;\;\;
  \begin{tikzpicture}[baseline={([yshift=-.4ex]current
bounding box.center)},scale=0.3] \draw [thick] (0,2) circle
[radius=2]; \draw [thick] (-2,2) to (-0.6,2); \draw [thick] (-0.8,2.2)
to (-0.4,1.8); \draw [thick] (-0.8, 1.8) to (-0.4, 2.2); \draw [thick]
(-1.4142, 0.5858) to [out=40, in=140] (1, 0.5); \draw [thick] (1.25,
0.3) to [out=-45, in=225] (2, 0.2); \draw [thick] (2,0.2) to [out=45,
in=-50] (1.732, 1); \node at (0.7, 2.2) {$\scriptstyle{u^{-1}}$};
\end{tikzpicture} \;\;+\;\;\frac{1}{4}\;\;\;
\begin{tikzpicture}[baseline={([yshift=-.4ex]current bounding
box.center)},scale=0.3] \draw [thick] (0,2) circle [radius=2]; \draw
[thick] (0,0) to (0,1.4); \draw [thick] (-0.2, 1.2) to (0.2, 1.6);
\draw [thick] (-0.2, 1.6) to (0.2, 1.2); \draw [thick] (0,0) to
[out=80, in=180] (0.5, 1); \draw [thick] (0.5,1) to [out=0, in=100]
(0.9, 0.4); \draw [thick] (0,0) to [out=-80, in=180] (0.5, -1); \draw
[thick] (0.5, -1) to [out=0, in=-100] (0.9, 0);
\end{tikzpicture}\]

\[ \hcV^\comb_{0,2,1}=
  \frac{1}{2}
  \begin{tikzpicture}[baseline={([yshift=-.4ex]current
bounding box.center)},scale=0.3] \draw (0,0)
node[cross=2pt,label=above:{}] {}; \draw[ultra thick] (0,0) to
(0.8,0); \draw (1,0) circle (.2); \draw (5.2,0)
node[cross=2pt,label=above:{}] {}; \draw [thick] (5.2,0) to (6.2,0);
\draw [thick] (1.2,0) to (2.2,0); \draw [thick] (4.2,0) circle
[radius=2];
\end{tikzpicture}+\frac{1}{2}
\begin{tikzpicture}[baseline={([yshift=-.4ex]current
bounding box.center)},scale=0.3] \draw (2,0)
node[cross=2pt,label=above:{}] {}; \draw (6.2,0)
node[cross=2pt,label=above:{}] {}; \draw [thick] (6.2,0) to (7.2,0);
\draw [thick] (2.1,0) to (3.2,0); \draw [thick] (5.2,0) + (-85:2)
arc(-85:265:2); \draw [thick] (5.2,-2) circle (.2); \draw [ultra
thick,domain=230:265] plot ({5.2+2*cos(\x)}, {2*sin(\x)});
\end{tikzpicture}+\begin{tikzpicture}[baseline={([yshift=-.4ex]current
bounding box.center)},scale=0.3] \draw (2,0)
node[cross=2pt,label=above:{}] {}; \draw (6.2,0)
node[cross=2pt,label=above:{}] {}; \draw [thick] (6.2,0) to (7.2,0);
\draw [thick] (2.1,0) to (3.2,0); \draw [thick] (5.2,0) circle
[radius=2]; \draw (1.8,-1.6) circle (.2); \draw [thick] (3.2,0) to
(1.9,-1.4);
\end{tikzpicture}\]

\[\hcV^\comb_{0,3,0}=
\frac{1}{2} \begin{tikzpicture}[baseline={([yshift=-.4ex]current
bounding box.center)},scale=0.3] \draw (2,0)
node[cross=2pt,label=above:{}] {}; \draw (5.2,1)
node[cross=2pt,label=above:{}] {}; \draw (5.2,-1)
node[cross=2pt,label=above:{}] {}; \draw [thick] (5.2,-1) to (7.2,0);
\draw [thick] (5.2,1) to (5.2,2); \draw [thick] (3.2,0) to (7.2,0);
\draw [thick] (2.1,0) to (3.2,0); \draw [thick] (5.2,0) circle
[radius=2];
\end{tikzpicture}+
\frac{1}{2} \begin{tikzpicture}[baseline={([yshift=-.4ex]current
bounding box.center)},scale=0.3] \draw [thick] (0,0) circle
[radius=1.5]; \draw [thick] (3,0) circle [radius=1.5]; \draw (-.5,0)
node[cross=2pt,label=above:{}] {}; \draw (2.5,0)
node[cross=2pt,label=above:{}] {}; \draw (5.5,0)
node[cross=2pt,label=above:{}] {}; \draw [thick] (5.5,0) to (4.5,0);
\draw [thick] (2.5,0) to (1.5,0); \draw [thick] (-.5,0) to (-1.5,0);
\end{tikzpicture}+
\frac{1}{2} \begin{tikzpicture}[baseline={([yshift=-1ex]current
bounding box.center)},scale=0.3] \draw [thick] (0,0) circle
[radius=1.5]; \draw [thick] (3,0) circle [radius=1.5]; \draw (-.5,0)
node[cross=2pt,label=above:{}] {}; \draw (1.5,1.5)
node[cross=2pt,label=above:{}] {}; \draw (3.5,0)
node[cross=2pt,label=above:{}] {}; \draw [thick] (3.5,0) to (4.5,0);
\draw [thick] (1.5,1.5) to (1.5,0); \draw [thick] (-.5,0) to (-1.5,0);
\end{tikzpicture}\]

\[\hcV^\comb_{0,1,3} =\frac{1}{2}\;
  \begin{tikzpicture}[baseline={([yshift=-1.2ex]current bounding
box.center)},scale=0.3] \draw (0,1.5) node[cross=2pt,label=above:{}]
{}; \draw [thick] (0,0.2) to (0,1.5); \draw [thick] (-0.2,0) to
(-2,0); \draw [thick] (0.2,0) to (2,0); \draw [thick] (-2.2,0) circle
[radius=0.2]; \draw [thick] (2.2,0) circle [radius=0.2]; \draw [thick]
(0,0) circle [radius=0.2]; \draw [ultra thick] (.2,0) to (1,0);
\end{tikzpicture}+\frac{1}{2}\;
  \begin{tikzpicture}[baseline={([yshift=-1.2ex]current bounding
box.center)},scale=0.3] \draw (1,1) node[cross=2pt,label=above:{}] {};
\draw [thick] (1,0) to (1,1); \draw [thick] (-0.2,0) to (-2,0); \draw
[thick] (0.2,0) to (2,0); \draw [thick] (-2.2,0) circle [radius=0.2];
\draw [thick] (2.2,0) circle [radius=0.2]; \draw [thick] (0,0) circle
[radius=0.2]; \draw [ultra thick] (0,-.2) to (0,-1);\end{tikzpicture}
+\frac{1}{2}\;
  \begin{tikzpicture}[baseline={([yshift=-.4ex]current bounding
box.center)},scale=0.3] \draw (1,1) node[cross=2pt,label=above:{}] {};
\draw [thick] (1,0) to (1,1); \draw [thick] (0,0) to (-2,0); \draw
[thick] (0,0) to (2,0); \draw [thick] (-2.2,0) circle [radius=0.2];
\draw [thick] (2.2,0) circle [radius=0.2]; \draw [thick] (0,-1) circle
[radius=0.2]; \draw [thick] (0,0) to (0,-.8); \node at (1.2,-1)
{$\scriptstyle{u^{-1}}$};
\end{tikzpicture}+\frac{1}{6}\;
  \begin{tikzpicture}[baseline={([yshift=-2ex]current bounding
box.center)},scale=0.3] \draw (-1,1) node[cross=2pt,label=above:{}]
{}; \draw [thick] (-1,0) to (-1,1); \draw [thick] (0,0) to (-2,0);
\draw [thick] (0,0) to (2,0); \draw [thick] (-2.2,0) circle
[radius=0.2]; \draw [thick] (2.2,0) circle [radius=0.2]; \draw [thick]
(1,1) circle [radius=0.2]; \draw [thick] (1,0) to (1,.8); \node at
(-1,1.5) {$\scriptstyle{u^{-1}}$};\end{tikzpicture}.\]
Note, for example, that the coefficients $1/24$ and $1/6$ in
$\hcV_{1,1,0}^\comb$ and $\hcV_{0,1,3}^\comb$, respectively, can be
used to recover the integrals of a $\psi$-class on $\bM_{1,1}$ and
$\bM_{0,4}$, respectively.  Indeed, we have
\[ \int_{\bM_{1,1}} \psi = \frac{1}{24}, \;\;\; \int_{\bM_{0,4}} \psi
  = 3! \cdot \frac{1}{6}=1.\]
This is explained with detail in~\cite{CalTu}.

Using computer calculations we are able to compute the string
vertices up to $\cV^\comb_{2,1,0}$. See~\cite{Calrg} for details on
the techniques used to speed up the computations.

\section{The original, non-constructive definition of CEI}
\label{sec:def}

In this section we define the CEI potential $\cD^{A,s}$ of a cyclic
$A_\infty$-algebra $A$ and a splitting $s$.  The definition we give in
this section is essentially the same as Costello's~\cite{Cos2}, and
hence not yet constructive (it uses non-explicit inverses of certain
quasi-isomorphisms).  We assume that the algebra $A$ satisfies
condition $(\dagger)$ in~\eqref{cond:dagger} and we continue to work
with the representation of the PROP of annuli
\[ (L = C_*(A)[d], b, B, \pairing_\Muk). \]

\subsection{Topological-conformal field theories}
Our first goal is to construct a canonical Maurer-Cartan element
$\hbA \in \hh$.  The dg-vector space $L$ carries the structure of a 2d
TCFT, inducing a natural map of DGLAs
\[ \rho^A:\hg^\comb \ra \hh. \]
We take $\hbA$ to be the image of the string vertex
$\cV^\comb\in\hg^\comb$ under this map.  See the discussion
in~(\ref{subsec:alggen}) for a conceptual guide to this construction.

Kontsevich-Soibelman~\cite{KonSoi} and Costello~\cite{Cos1} were the
first to sketch the construction of a 2d TCFT associated with a cyclic
$A_\infty$-algebra.  The state space of this TCFT is the complex
$(L,b)$.  Their construction was later treated in greater detail
(including signs) in Wahl-Westerland~\cite{WahWes}; an improved
treatment of signs (making them explicitly computable) will appear
in~\cite{CalChe}.

\begin{Theorem}
  \label{thm:tcft}
  The shifted reduced Hochschild chain complex $L = C_*(A)[d]$ of a
  cyclic $A_\infty$-algebra $A$ of Calabi-Yau dimension $d$ carries
  the structure of a positive boundary 2d TCFT.  In other words there
  are natural action maps for every $g\geq 0$, $k\geq 1$, $l\geq 0$
  satisfying $2-2g-k-l<0$
  \[ \rho_{g,k,l}^A: C^\comb_*(M_{g,k,l}^\fr) \ra \Hom ( L^{\otimes
      k}, L^{\otimes l})[d(2-2g-2k)]\]
  which are compatible with the sewing operations of framed Riemann
  surfaces.
\end{Theorem}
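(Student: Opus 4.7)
The plan is to exploit the combinatorial presentation of $C^\comb_*(M^\fr_{g,k,l})$ by (framed, oriented) ribbon graphs with $k$ input faces and $l$ white output vertices, and to define $\rho^A_{g,k,l}$ graph-by-graph by a ``Feynman--style'' contraction using the cyclic $A_\infty$ structure on $A$ as the vertex data and the inverse of the cyclic pairing as the edge propagator. One then extends $\bbK$-linearly and verifies the two axioms required: compatibility with the combinatorial differential (making $\rho^A_{g,k,l}$ a chain map), and compatibility with the sewing maps.

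Concretely, for a ribbon graph $\Gamma$ of type $(g,k,l)$ I would assign to each internal black vertex of valence $n$ the operation $m_n \colon A^{\otimes n} \to A$, read off along the cyclic ordering induced by the ribbon structure; to each edge the copairing in $A \otimes A$ obtained from the non-degenerate cyclic pairing (which exists by smoothness and the Calabi-Yau property); to each of the $k$ input cycles a Hochschild chain $x_0[x_1|\cdots|x_{r-1}] \in L$ whose tensor factors are distributed cyclically around the corners of that cycle, starting at the crossed edge; and to each of the $l$ white vertices an output Hochschild chain formed by collecting the $A$-factors adjacent to that vertex in their cyclic order. Contracting all of this along the graph yields an element of $\Hom(L^{\otimes k}, L^{\otimes l})$, and the total homological degree is pinned down by the Euler characteristic count $V_\bullet - E + \text{(framing data)}$, which rearranges into the stated shift $d(2-2g-2k)$ using $2-2g-k-l = V - E + F$.

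Next I would check the chain-map property. The combinatorial boundary $\partial$ on $C_*^\comb(M^\fr_{g,k,l})$ is a sum of local moves: edge contractions at black vertices and expansions at white vertices, plus the framing/Connes contributions that account for $uB$. Under $\rho^A_{g,k,l}$, contracting an edge between two black vertices of valences $p$ and $q$ produces precisely the term $m_{p+q-1}$ composition appearing in one summand of the $A_\infty$ relation $\sum m_i \circ m_j = 0$, while boundaries that detach an edge at a white vertex or at an input leaf produce the Hochschild differential $b$ on the corresponding input or output. These identifications, together with the cyclic symmetry of the pairing and the $A_\infty$ relations for $A$, imply that $\rho^A_{g,k,l}\circ \partial = d \circ \rho^A_{g,k,l}$ where $d$ is the Hochschild--plus--framing differential on the target. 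Compatibility with the framing operators $B_i$ at each input/output is direct from the definition of the Connes operator on $L$, and compatibility with the sewing maps $G_r, G_{j'j''}, G_{j_1 j_2}$ reduces, in the combinatorial model (by Egas's theorem quoted earlier), to the statement that gluing an output vertex of one ribbon graph to an input face of another and contracting via the copairing is exactly composition of Hochschild operations followed by Alexander--Whitney.

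The main obstacle is sign coherence: the cyclic symmetrization at each vertex, the orientation data on ribbon graphs, the shifted $A_\infty$ signs, and the Koszul signs of the Hochschild chain complex all interact nontrivially, and a naive assignment fails to produce a chain map by an overall sign. Following Wahl--Westerland~\cite{WahWes} (and the forthcoming explicit treatment~\cite{CalChe}) one fixes a local sign rule on edges and vertices by choosing an orientation on the set of edges and faces of each ribbon graph, and checks that the combinatorial boundary changes this orientation by a sign matching the one coming from the $A_\infty$ relations; this is a lengthy but local verification. Finally, the positive-boundary hypothesis $k \geq 1$ is essential: at least one input face is needed so that every connected component of $\Gamma$ carries an input cycle and the contraction of $A$-factors around it defines a finite-dimensional trace, thereby avoiding any infinite sum over the possibly infinite-dimensional $A$. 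Once signs are in hand, the compatibilities above combine into the positive-boundary TCFT structure asserted.
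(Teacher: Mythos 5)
Your proposal is correct and follows essentially the same route the paper takes: the paper offers no proof of its own but defers to the ribbon-graph Feynman-contraction construction of Kontsevich--Soibelman~\cite{KonSoi}, Costello~\cite{Cos1}, and Wahl--Westerland~\cite{WahWes}, with the sign verification postponed to~\cite{CalChe}, which is exactly the construction you outline (cyclic $A_\infty$ tensors at black vertices, the copairing on edges, Hochschild chains on input faces and white vertices) and exactly where you, too, defer the sign bookkeeping. The only caveat is that, like the paper, you have sketched rather than proved the hard part --- the orientation/sign coherence making $\rho^A_{g,k,l}$ a chain map --- so the argument is complete only modulo the cited references.
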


\begin{Remark}
  The signs involved in proving this theorem are pretty involved,
  especially when $d$ is odd.  In~\cite{WahWes} the authors work with
  the usual Hochschild chain complex, without shifting by $d$.  This
  makes the treatment of signs rather not transparent for odd $d$.
  Following Costello's idea~\cite{Cos2} to include the shift $[d]$
  into the Hochschild chain complex, a more explicit and complete
  description of the signs involved in the construction of $\rho^A$
  will be presented in the upcoming paper~\cite{CalChe}.
\end{Remark}

\begin{Remark}
  The positive boundary condition is the requirement that $k\geq 1$ in
  the definition of the above maps.  This is obvious from the fact
  that ribbon graphs must have at least one face, but it is also
  needed in order to avoid difficulties associated with the fact that
  $L$ is in general infinite dimensional, so it cannot carry a
  non-degenerate bilinear pairing.
\end{Remark}

\begin{Remark}
  Note that the shift by $d(2-2g-2k)$ is even, so it will not affect
  anything in the $\Z/2\Z$-graded cases.  It will become relevant in
  Appendix~\ref{app:z-graded}.
\end{Remark}

Both $C^\comb_*(M_{g,k,l}^\fr)$ and $\Hom ( L^{\otimes k}, L^{\otimes
  l})$  carry circle and symmetric group actions, so we can perform
the ``$\hS$'' operation on them -- twisting the action by the local
system $\usgn$ on inputs, and taking homotopy quotients by the circle
actions and ordinary quotients by the symmetric group actions.

First, twisting the TCFT representation by $\usgn$ we get new maps
\begin{align*}
  \rho_{g,k,l}^{A,\tw}: C^\comb_*(M_{g,k,l}^\fr, \usgn) &\ra
        \Hom((L[1])^{\otimes k}, L^{\otimes l})\\
  \rho_{g,k,l}^{A,\tw} \big( \Gamma, p_1\wedge\cdots\wedge p_k \big) &
        = \rho_{g,k,l}^A(\Gamma) \circ s^{\otimes k}
\end{align*}
where $\Gamma$ is an element of $C^\comb_*(M_{g,k,l}^\fr, \usgn)$
and $s: L[1] \ra L$ is the shift map,
\[ s(a) = (-1)^{|a|} a. \]
Recall that the differential of $L[1]$ is $\partial = -b$, so that
$\partial s =- b s$, and therefore $ \rho_{g,k,l}^{A,{\sf tw}}$ is a
chain map:
\begin{align*}
  (\partial \rho_{g,k,l}^{A,\tw} ) \big( \Gamma,
    p_1\wedge\cdots\wedge p_k \big)  &= b \rho_{g,k,l}^{A}(\Gamma)
      s^{\otimes k} - (-1)^{|\Gamma|} 
      \rho_{g,k,l}^{A}  (\Gamma)s^{\otimes k}(-b)\\
  & = b \rho_{g,k,l}^{A}(\Gamma) s^{\otimes k} -(-1)^{|\Gamma|+k}
      \rho_{g,k,l}^{A}(\Gamma) b s^{\otimes k}\\
  & = (\partial \rho_{g,k,l}^{A}(\Gamma) ) s^{\otimes k}\\
  & = \rho_{g,k,l}^{A,\tw}\big( \partial( \Gamma,p_1\wedge\cdots\wedge p_k)\big).
\end{align*}
We remind the reader that the $\usgn$ local system has a shift $[-k]$
built into it, so the maps $\rho_{g,k,l}^{A,\tw}$ respect grading.

Further taking homotopy quotient with respect to the action of
$(S_k\times S_l)\ltimes (S^1)^{k+l}$ we obtain action maps on
$C^\comb_*(M_{g,k,l}^{\sf fr},\usgn)_\hS$, which we will still denote
by $\rho_{g,k,l}^{A,\tw}$:
\[ \rho_{g,k,l}^{A,\tw}: C^\comb_*(M_{g,k,l}^{\sf
fr},\usgn)_\hS \ra \Hom^\cont \big( \Sym^k(L_+[1]),
\Sym^l L_-\big).\]

It is worth describing explicitly the graded vector space
$C^\comb_*(M_{g,k,l}^{\sf fr},\usgn)_\hS$.  Consider the set of
oriented ribbon graphs of type $g,k,l$, whose input faces and output
white vertices have been marked with non-positive powers of $u$.  An
isomorphism between such graphs is a ribbon graph isomorphism, allowed
to permute the input faces among themselves, as well as the output
white vertices among themselves, as long as the $u$-power markings are
matched.  Such an isomorphism is said to be positively oriented if it
either preserves the orientation of the ribbon graphs and is an even
permutation on the faces, or gives the opposite orientation on the
ribbon graphs and is an odd permutation on the inputs.  It will be
said to be negatively oriented otherwise.  The space
$C^\comb_*(M_{g,k,l}^{\sf fr},\usgn)_\hS$ is the $\Q$-vector space
spanned by such $u$-marked graphs, modulo the relations
$\Gamma_1 = \Gamma_2$ if there is a positively oriented isomorphism
$\Gamma_1\iso \Gamma_2$ and $\Gamma_1 = -\Gamma_2$ if there is a
negatively oriented one.

The degree of a $u$-marked graph whose inputs are labeled by
$\{u^{-a_i}\}$ and outputs by $\{u^{-b_j}\}$ is
\[ \deg(\Gamma, p_1\wedge\cdots \wedge p_k) = \deg(\Gamma) - k +
  2\sum_{i=1}^k a_i + 2 \sum_{j = 1}^l b_j. \]

The action of a $u$-marked graph $\Gamma$ on $\Sym(L_+[1])$ is done by
first matching $u$-powers, and then inserting the corresponding
elements of $L$ into the inputs of the unmarked $\Gamma$ with a sign
correction of $(-1)^{u\text{-power}}$ at each input. The output
$u$-powers multiply the elements of $L$ read from the corresponding
outputs. 

For example, in the case $k=1$, $l =1$, for a graph whose input is
labeled by $u^{-a}$ and output by $u^{-b}$, the action is given
explicitly by
\[ \rho_{g,1,1}^{A,\tw}\big( \Gamma \big)  (\alpha\cdot u^{a'})
  = \begin{cases}
    (-1)^{a+|\alpha|}\cdot \rho_{g,1,1}^A (\Gamma)(\alpha)\cdot u^{-b}
  & \mbox{ if } a=a';\\ 
    0 & \mbox{ otherwise.}
  \end{cases}
\]
(The sign $(-1)^{|\alpha|}$ comes from the operator $s$, while
$(-1)^a$ is the sign correction above.)  For general $k\geq 1$ we need
to shuffle the inputs, while for $l\geq 1$ we need to symmetrize the
outputs.

\begin{Lemma}
  The maps $\rho_{g,k,l}^{A,\tw}$ are chain maps.
\end{Lemma}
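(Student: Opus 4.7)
The plan is to reduce the lemma to two ingredients: the compatibility with the geometric boundary $\partial$ already verified before taking homotopy quotients, and the compatibility of the TCFT action with the circle-rotation operators at inputs and outputs, which is part of the structure stated in Theorem~\ref{thm:tcft}. Together these will give compatibility with the equivariant differential $\eth = \partial + uB$ on both sides, and the descent to the $\hS$-quotients is then a matter of verifying equivariance under $S_k\times S_l$ twisted by $\usgn$.

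First, I would record the circle-compatibility of the underlying TCFT: for every ribbon graph $\Gamma\in C_*^\comb(M_{g,k,l}^\fr)$,
\[ \rho^A_{g,k,l}(B_i \Gamma) = B^{(i)}\circ \rho^A_{g,k,l}(\Gamma), \qquad \rho^A_{g,k,l}(B_j\Gamma) = \rho^A_{g,k,l}(\Gamma)\circ B^{(j)}, \]
where $B_i, B_j$ are the rotations of the framing at the $i$-th input and $j$-th output, and $B^{(i)}, B^{(j)}$ are the Connes operators on the corresponding Hochschild factors. This is precisely how the $S^1$ actions enter the definition of a 2d TCFT produced from a cyclic $A_\infty$-algebra. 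The twist by $s^{\otimes k}$ and by the local system $\usgn$ does not interfere with this identity, since $s$ commutes with $b+uB$ up to the shifted sign already incorporated into the differential on $L_+[1]$.

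Second, I would check that the explicit $u$-power matching rule in the definition of $\rho^{A,\tw}_{g,k,l}$ on $u$-marked graphs (with the sign $(-1)^a$ at an input labeled by $u^{-a}$ and the corresponding output $u$-power multiplication) is exactly what is needed to convert the source summand $\sum_i u_i B_i + \sum_j u_j B_j$ of $\eth$ into the action of $uB$ on $\Sym^k(L_+[1])$ and on $\Sym^l L_-$ on the target. The fact that the source makes $u_i$ invertible while the target is $u$-adically complete creates no issue: a non-trivial evaluation between an input marked $u^{-a}$ and an argument of $u$-order $a'$ occurs only when $a=a'$, by the convention used to define $\rho^{A,\tw}_{g,k,l}$.

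Finally, I would verify that the map descends with the correct sign characters to the $\hS$-quotient: permuting the input faces of a ribbon graph corresponds to the same permutation of the Hochschild inputs under $\rho^A$, and twisting by $\usgn_k$ on the source matches the shift $L[1]$ on the target, since a permutation of shifted elements picks up precisely the Koszul sign that is cancelled by the sign character. On outputs, the TCFT is $S_l$-equivariant with the trivial representation, which matches the $\Sym^l$ on the target. Thus the map factors through the coinvariants into $\Hom^\cont(\Sym^k(L_+[1]),\Sym^l L_-)$ and intertwines the differentials. The main obstacle is the careful bookkeeping of signs coming from four distinct sources: the shift $[d]$ on the Hochschild complex, the shift encoded by $s$, the sign character of $\usgn$, and the $u$-power sign corrections. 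The construction has been arranged so that all four cancel; a fully explicit treatment is carried out in~\cite{CalChe}.
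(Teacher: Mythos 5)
Your proposal is correct and follows essentially the same route as the paper: compatibility with $\partial$ is the computation already done before the lemma, the output circle action is unaffected by the twisting, and the input case reduces to the observation that the $(-1)^{u\text{-power}}$ sign correction absorbs the degree shift $|B\alpha|=|\alpha|+1$ when the $u$-power matching moves by one — which is exactly the explicit $k=l=1$ calculation the paper carries out. (One small notational slip: rotating the framing at an \emph{input} corresponds to pre-composition with the Connes operator on the source factor and rotation at an \emph{output} to post-composition on the target, i.e.\ your two displayed identities are swapped, but this does not affect the argument.)
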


\begin{proof}
  We have already checked compatibility with the differential $b$, so
  it only remains to prove the compatibility with $uB$.  The operator
  $B$ acts on a graph $\Gamma$ by acting either on an input (call
  this $B^{\sf in}$) or on an output ($B^{\sf out}$).  Since the
  twisting of the action does not affect the outputs in any way, the
  compatibility with $B^{ \sf out}$ is obvious.

  For the compatibility with $B^{\sf in}$ we will again check the
  $k=1$, $l=1$ case; the more general case of arbitrary $k$, $l$ is
  only slightly more complex in terms of notation. Assume that the
  input of the graph is labeled by $u^{-a}$ and the output by
  $u^{-b}$.  Then we have 
  \begin{align*}
    \rho_{g,1,1}^{A,\tw} \big( uB^{\sf in}( \Gamma ) \big) (\alpha\cdot
    u^{a'})  & =  \delta_{a-1,a'} \cdot (-1)^{a-1+|\alpha|} \cdot \rho_{g,1,1}^A
      (B^{\sf in}\Gamma) (\alpha) \cdot u^{-b}  \\
    & = \delta_{a, a'+1} \cdot (-1)^{a+|B\alpha|} \cdot \rho_{g,1,1}^A
      (\Gamma) (B\alpha) \cdot u^{-b} \\
    & = \rho_{g,1,1}^{A,\tw}(\Gamma) \left (uB (\alpha\cdot u^{a'}) \right).
  \end{align*}
\end{proof}

\begin{Remark}
  This calculation explains why the extra sign $(-1)^{u\text{-power}}$
  is needed in the definition of the action $\rho^{A,\tw}$. This sign
  also matches well with the sesquilinear property in the definition
  of residue pairing, see~(\ref{subsec:circle}).  This sign is also
  used in the following proposition to match the two $\iota$ operators
  in $\hg$ and $\hh_A$, since {\sl a priori} the $\iota$ operator of
  $\hg$ is $u$-linear, while the $\iota$ operator of $\hh_A$ is
  $u$-sesquilinear.
\end{Remark}

\begin{Proposition}
  \label{prop:universal}
  The twisted TCFT structure maps $\{ \rho_{g,k,l}^{A,\tw} \}$ induce
  a morphism of DGLAs
  \[ \rho^{A,\tw}: \hg^\comb \ra \hh_A.\]
\end{Proposition}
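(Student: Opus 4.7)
The plan is to verify, term by term, that $\rho^{A,\tw}$ intertwines the differentials $\eth + \iota + \hbar\Delta$ on both sides and respects the Lie brackets $\{-,-\}_\hbar$. Fortunately the three pieces of the differential are defined by geometric operations (the equivariant boundary, relabelling an output as an input, self-sewing two outputs) that admit direct algebraic counterparts, and the brackets are built from the sewing operations $\widetilde{G^B_r}$ which are the very structure for which TCFTs are designed. So the strategy is to identify each geometric building block with its algebraic image under $\rho^{A,\tw}$.

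First I would dispatch the easy pieces. Compatibility with $\eth = \partial + uB$ was already proved in the lemma immediately preceding the proposition, so it only needs to be promoted to the symmetrized/coinvariant setting, which is automatic. Compatibility with $\hbar\Delta$ reduces to Costello's observation that self-sewing two output boundaries of a framed surface, followed by restriction to the $u^0$-coefficient at those outputs, implements on the TCFT side the composition with the chain-level Mukai pairing $\langle Bx_0, y_0\rangle$ — which is exactly the definition of the BV operator on $\hh_A$ given in~(\ref{subsec:dgla1}). This is precisely what the sewing-compatibility clause of Theorem~\ref{thm:tcft} supplies, once one traces through the $\pi_{ij}$-truncation.

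Next, compatibility with $\iota$ is the subtle match between the geometric relabelling $\iota_j\colon M^\fr_{g,k,l}\to M^\fr_{g,k+1,l-1}$ and the algebraic contraction operator $C_\beta$ used to define $\iota$ on $\hh_A$. Converting an output boundary into an input boundary in the TCFT formalism exchanges post-composition for pre-composition, which under the residue pairing $\langle u\beta,\,-\rangle_\res$ evaluated at $u=\infty$ is precisely the contraction $C_\beta$. The sign $(-1)^{u\text{-power}}$ that was built into $\rho^{A,\tw}$ — and which the remark preceding the proposition explicitly flags — is what converts the $u$-linear geometric $\iota$ into the $u$-sesquilinear algebraic $\iota$. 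For the bracket, the twisted sewing map $\widetilde{G^B_r}$ sews the first $r$ inputs of one surface to the first $r$ outputs of another after applying $B$ at each output strand and restricting the corresponding circle parameters to their $u^0$-components. Under $\rho^{A,\tw}$, each input/output pairing translates into the residue pairing and each $B$ insertion translates into the map $\Theta\colon L_-\to L_+[1]$; the shuffle symmetrization $\circ_r$ on the algebraic side matches the sum over shuffles in $\hg^\comb$. Hence $\rho^{A,\tw}(\Psi\circ_r\Phi) = \rho^{A,\tw}(\Psi)\circ_r \rho^{A,\tw}(\Phi)$, and antisymmetrizing in $r$ yields $\rho^{A,\tw}(\{\Psi,\Phi\}_\hbar) = \{\rho^{A,\tw}(\Psi),\rho^{A,\tw}(\Phi)\}_\hbar$.

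The main obstacle is the bookkeeping of signs: the shifted-sign convention for DGLAs from~(\ref{subsec:signs}), the $[-k]$ shift packaged into the local system $\usgn$, the shift operator $s\colon L[1]\to L$, the $(-1)^{u\text{-power}}$ correction, the Koszul signs in the shuffles $\sigma\in\mathsf{Shuffle}(r,k'-r)$ and $\tau\in\mathsf{Shuffle}(r,l''-r)$, and the sign $(-1)^{\deg(\beta)(k'+r)}$ built into $G^B_r$. I would organize the verification using Sheridan's sign diagrams, drawing for each identity the diagram of the geometric operation and the diagram of its algebraic counterpart, and invoking isotopy invariance of sign diagrams to reduce the check to a single combinatorial identity per relation. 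The conceptual content is entirely carried by the sewing compatibility of Theorem~\ref{thm:tcft}; the proof is then a sign-diagram verification of essentially the same kind as the proofs of Lemma~\ref{lem:square-zero} and Theorem~\ref{thm:h-hat}.
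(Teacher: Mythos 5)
Your proposal is correct and follows the same route the paper takes — the paper's own proof consists entirely of the remark that the claim ``is evident from the construction, although one needs to check the signs carefully,'' and your term-by-term matching of $\eth$, $\hbar\Delta$, $\iota$, and the $\circ_r$-brackets against their geometric counterparts, together with the sign-diagram bookkeeping, is precisely the omitted verification. If anything, your write-up supplies more detail than the paper does.
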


\begin{proof}
  This is evident from the construction, although one needs to check
  the signs carefully. We omit the details.
\end{proof}

\begin{Remark}
  This proposition shows that the Sen-Zwiebach DGLA $\g$ (or,
  equivalently, $\hg$) is the ``universal'' DGLA among DGLAs
  constructed from TCFTs. This is more transparent than the point of
  view in~\cite{CCT}, where the DGLAs $\hg$ and $\hh_A$ were
  constructed on equal footing.
\end{Remark}

\subsection{\texorpdfstring{Definition of $\hbA$}{Definition of
    hat-beta-A}}
\label{subsec:defbeta}
We now come to the main point of the first part of the construction of
CEI.  In $\hg$ we have a canonical
Maurer-Cartan element $\hcV^\comb$, the combinatorial string vertex
\[\hcV^\comb=\sum_{g,k\geq 1,l} \hcV_{g,k,l}^\comb\,
  \hbar^g\,\lambda^{2g-2+k+l},\]
of Theorem~\ref{thm:comb-vertex}. Its push-forward under
$\rho^{A,\tw}$ is then a canonical Maurer-Cartan element $\hbA\in
\hh_A$ of the form 
\[ \hbA= \rho^{A,{\sf tw}}_*(\hcV^\comb)=
  \sum_{g,k\geq 1,l}\rho^{A,{\sf tw}}(\hcV_{g,k,l}^\comb)\,
  \hbar^g\,\lambda^{2g-2+k+l}. \]
The tensors
\[ \hbA_{g,k,l} = \rho^{A,\tw}(\hcV^{\sf comb}_{g,k,l}) \in \Hom^\cont
  \big ( \Sym^k(L_+[1]), \Sym^lL_-\big) \]
will play a key role in the definition and computation of the CEI of
the cyclic $A_\infty$-algebra $A$.  They only depend on $A$,
and not on any future choice of splitting.

\subsection{Splittings of the non-commutative Hodge
filtration} 
To obtain invariants of $A$ that are similar to those from
Gromov-Witten theory we need a further ingredient: a choice of
splitting $s$ of the Hodge filtration.

\begin{Definition}
  \label{def:splitting}
  A splitting of the non-commutative Hodge filtration of $A$ is a
  graded map of $\Z/2\Z$-graded vector spaces
  \[ s:H_*(L)\ra H_*(L_+) \]
  satisfying the following two conditions:
  \begin{itemize}
  \item[S1.] {\em (Splitting condition.)} $s$ splits the canonical
    projection $H_*(L_+) \ra H_*(L)$.
  \item[S2.] {\em (Lagrangian condition.)}  $ \langle s(x),\,
    s(y)\rangle_{\sf hres} = \langle x,\,y\rangle_\Muk \mbox{ for any } x,
    y \in H_*(L)$.
  \end{itemize}
\end{Definition}

\begin{Remark}
  In many circumstances it is useful to place further restrictions on
  the allowed splittings.  For example one may impose certain
  homogeneity conditions, or ask for the splitting to be compatible
  with the cyclic structure of the $A_\infty$-algebra.  In this paper
  we only need conditions S1 and S2, but we refer the reader
  to~\cite[Definition 3.7]{AmoTu} for more information on these
  possible restrictions.
\end{Remark}

Since $A$ is assumed to satisfy the Hodge-de Rham degeneration
property, we can endow the homology $H = H_*(L)$ with the trivial
circle action.  According to our conventions on circle actions we then
have the following graded vector spaces with trivial differentials:
\[ H^\Tate = H\laurent{u}, \;\;\; H_+ = H\series{u}, \;\;\; H_- =
  H[u^{-1}].\]
A splitting $s: H \ra H_*(L_+)$, assumed to satisfy the
Lagrangian condition S2, extends by $u$-linearity
to an isomorphism of {\em symplectic} vector spaces. 
\[ s: \big( H^\Tate, \langle-,-\rangle_{{\sf res}}\big) \ra \big(
  H_*(L^\Tate), \langle-,-\rangle_{{\sf res}}\big).\]

\subsection{Definition of CEI}
\label{subsec:defi}
Recall (Lemma~\ref{lem:biota}) that there is a quasi-isomorphism of
DGLAs
\[ \biota: \h_A^+ \ra \hh_A.\]
The Maurer-Cartan moduli space is invariant\footnote{We certainly need
  to work with certain convergence properties when defining
  Maurer-Cartan elements. In the current case, we require
  Maurer-Cartan elements to have strictly positive power of
  $\lambda$.} under such quasi-isomorphisms.  Thus there exists a
Maurer-Cartan element $\betaA\in\h_A^+$,
\[ \betaA =\sum_{g,n\geq 1}\betaA_{g,n}\,\hbar^g\,\lambda^{2g-2+n}, \]
defined by the property that $\biota(\betaA)$ is gauge equivalent
to the element $\hbA$ defined above.  By definition, it is unique up to
gauge equivalences.

Recall from~(\ref{subsec:Koszul}) that we have a short exact
sequence of DGLAs
\[ 0 \ra \bbK[1]\series{\hbar, \lambda} \ra \h_A \ra
\h_A^+ \ra 0. \]
The canonical vector space direct sum decomposition
\[\h_A = \bbK[1]\series{\hbar, \lambda} \oplus \h_A^+ \]
allows us to regard the Maurer-Cartan element $\betaA\in \h_A^+$ as an
element in $\h_A$, by taking its
$\bbK[1]\series{\hbar,\lambda}$-component to be zero\footnote{This
  choice is only made for the sake of proving
  Proposition~\ref{prop:action}.  A more reasonable choice would be to
  force the dilaton equation. But this would present additional
  difficulties in Proposition~\ref{prop:action}.}.  Even though
$\h_A^+$ is not a sub-Lie algebra of $\h_A$, we will prove in
Lemma~\ref{lem:extension} that $\betaA\in \h_A$ still satisfies the
Maurer-Cartan equation.  We defer the proof of this fact until
Section~\ref{sec:trivial-2}. 

There is a well-known relationship between Maurer-Cartan elements of
the DGLA $\h_A$ with closed elements of the differential
$b+uB+\hbar\Delta$, see Lemma~\ref{lem:mc-closed} below.  This
relationship involves computing the exponential $\exp(\beta/\hbar)$
for a Maurer-Cartan element $\beta\in \h_A$.  However, this
exponential does not make sense unless we impose additional finiteness
conditions.  For this reason we extend the definition of the Weyl
algebra $\cW\big(L^\Tate\big)$ to include
an additional even variable $\lambda$:
\[ \cW\big(L^\Tate\big) = T\big( L^\Tate \big)\series{\hbar,
\lambda}/\big (
\alpha\otimes\beta-(-1)^{|\alpha||\beta|}\beta\otimes\alpha=\hbar\langle
\alpha,\beta\rangle_{\sf res}\big).\ \]

We localize the Weyl algebra at $\hbar$ and complete in the
$\lambda$-adic topology to get the localized and completed algebra
\[ \widehat{\cW}_\hbar\big(L^\Tate\big) = \varprojlim_n
\cW\big(L^\Tate\big)[\hbar^{-1}]/(\lambda^n).\] Infinite power series
of the form
\[ \sum_{k\geq0} \alpha_k \lambda^k \hbar^{-k}\] exist in
$\widehat{\cW}_\hbar \big(L^\Tate\big)$ but not in
$\cW\big(L^\Tate\big)$.

By analogy with the construction of $\widehat{\cW}(L^\Tate)$ we
denote by $\widehat{\cF}_{A,\hbar}$ and $\wSym (L_-)
\series{\hbar,\lambda}$ the localized and completed versions of the
Fock space and the symmetric algebra.

With these preparations we can state the following result, which is well
known.

\begin{Lemma}
  \label{lem:mc-closed}
  An element $\beta\in \lambda\cdot \h_A[1]$ of even degree satisfies
  the Maurer-Cartan equation
  \[ (b+uB+\hbar \Delta) \beta + \frac{1}{2} \{ \beta, \beta\} =0\]
  if and only if $\cD = \exp(\beta/\hbar)$ is
  $(b+uB+\hbar\Delta)$-closed in $\wSym(L_-)\series{\hbar,\lambda}$.

  Moreover, two such Maurer-Cartan elements $\beta_1$ and $\beta_2$
  are gauge equivalent if and only if the corresponding elements
  $\cD_1$ and $\cD_2$ are homologous.  All these identities hold in
  the algebra $\wSym(L_-)\series{\hbar,\lambda}$.
\end{Lemma}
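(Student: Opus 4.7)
The plan rests on the fact that $\Delta$ is a second-order differential operator with respect to the commutative product on $\wSym(L_-)\series{\hbar,\lambda}$, and its failure to be a derivation is precisely the bracket $\{-,-\}$, which is therefore itself a graded biderivation. The forward direction of the first claim is then a direct computation. Since $b+uB$ is a derivation of the commutative product, one has $(b+uB)\exp(\beta/\hbar) = \hbar^{-1}((b+uB)\beta)\exp(\beta/\hbar)$. For the BV part, an induction on monomial degree using the second-order property of $\Delta$ yields, for any element $f$ of even (shifted) degree, the identity
\[ \Delta\exp(f) = \bigl(\Delta f + \tfrac{1}{2}\{f,f\}\bigr)\exp(f). \]
Applying this with $f=\beta/\hbar$ and combining with the preceding formula, I obtain
\[ (b+uB+\hbar\Delta)\exp(\beta/\hbar) = \hbar^{-1}\bigl((b+uB)\beta + \hbar\Delta\beta + \tfrac{1}{2}\{\beta,\beta\}\bigr)\exp(\beta/\hbar). \]
Because $\exp(\beta/\hbar) = 1 + O(\lambda)$ is a unit in $\wSym(L_-)\series{\hbar,\lambda}$, the right-hand side vanishes if and only if $\beta$ satisfies the Maurer-Cartan equation.

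For the forward implication of the gauge equivalence claim, suppose $\{\beta_t\}_{t\in[0,1]}$ is a smooth family of Maurer-Cartan elements satisfying $\dot\beta_t = (b+uB+\hbar\Delta)\gamma_t + \{\gamma_t,\beta_t\}$ for some family $\gamma_t$. Writing $\cD_t = \exp(\beta_t/\hbar)$ and using that $\{\gamma_t,-\}$ is a derivation, I compute $\{\gamma_t,\cD_t\} = \hbar^{-1}\{\gamma_t,\beta_t\}\cD_t$, and combining this with $(b+uB)\cD_t = -\hbar\Delta\cD_t$ (from the first paragraph) produces the identity
\[ (b+uB+\hbar\Delta)(\gamma_t\cD_t/\hbar) = \hbar^{-1}\bigl((b+uB+\hbar\Delta)\gamma_t + \{\gamma_t,\beta_t\}\bigr)\cD_t = \hbar^{-1}\dot\beta_t\cdot\cD_t = \dot\cD_t. \]
Integration in $t$ then exhibits $\cD_1 - \cD_0$ as the coboundary of $\int_0^1 \gamma_t\cD_t/\hbar\,dt$.

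For the converse, I would use that $\beta \mapsto \exp(\beta/\hbar)$ is a bijection between the set of even elements of $\lambda\cdot \h_A[1]$ and the set $1 + \lambda\cdot\wSym(L_-)\series{\hbar,\lambda}^{\mathrm{even}}$, with inverse given by the formal logarithm $\hbar\log(-)$. Assuming $\cD_1 - \cD_0 = (b+uB+\hbar\Delta)\eta$, interpolate linearly via $\cD_t = (1-t)\cD_0 + t\cD_1$, which remains $(b+uB+\hbar\Delta)$-closed and a unit with leading term $1$; set $\beta_t = \hbar\log\cD_t$, each of which is Maurer-Cartan by the already-proved first direction. Choosing $\gamma_t = \hbar\eta\,\cD_t^{-1}$ and running the computation of the previous paragraph in reverse yields $\dot\beta_t = (b+uB+\hbar\Delta)\gamma_t + \{\gamma_t,\beta_t\}$, which is the infinitesimal form of gauge equivalence.

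The main obstacle I anticipate lies in the last step: while $\hbar\log\cD_t$ and $\hbar\eta\cD_t^{-1}$ manifestly make sense in the $\hbar$-localised, $\lambda$-adically completed Fock space, one must verify that no negative powers of $\hbar$ arise, so that these expressions can be interpreted as genuine elements of $\h_A$ (i.e.\ as power series in $\hbar$, $\lambda$). This can be handled by a $\lambda$-adic induction: working order by order in $\lambda$ one solves for the primitive $\eta$ (or adjusts it by an exact element) to keep it of the required form $\gamma\cD_t/\hbar$ with $\gamma \in \h_A$. The needed homotopy-theoretic input is essentially the standard fact that the Maurer-Cartan functor of a filtered DGLA takes quasi-isomorphisms to bijections on gauge equivalence classes, applied to the identity on $\h_A$ with the $\lambda$-adic filtration.
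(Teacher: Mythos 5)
The paper itself offers no proof of this lemma --- it is introduced with ``we can state the following result, which is well known'' --- so there is nothing to compare your argument against line by line; I will assess it on its own terms. Your first paragraph and your ``gauge $\Rightarrow$ homologous'' direction are correct and are the standard dgBV computations: the identity $\Delta\exp(f)=(\Delta f+\tfrac12\{f,f\})\exp(f)$ for even $f$ follows by induction from the second-order property exactly as you say, $b+uB$ is a derivation, $\exp(\beta/\hbar)=1+O(\lambda)$ is a unit in the $\lambda$-adically complete algebra, and the chain $\int_0^1\gamma_t\cD_t/\hbar\,dt$ correctly exhibits $\cD_1-\cD_0$ as a coboundary.

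The genuine gap is in the converse, and it is slightly different from the one you flag. The linear interpolation $\cD_t=(1-t)\cD_0+t\cD_1$ does \emph{not} stay inside the image of $\exp\big((\lambda\cdot\h_A[1])/\hbar\big)$: writing $\cD_t=\cD_0\big(1+t(e^{(\beta_1-\beta_0)/\hbar}-1)\big)$ and expanding the logarithm, the coefficient of $t-t^2$ already contains $(\beta_1-\beta_0)^2/\hbar^2$, so $\hbar\log\cD_t$ has a genuine $\hbar^{-1}\lambda^2$ term (take $\beta_0=0$, $\beta_1=\lambda x$: then $\hbar\log\cD_t=t\lambda x+\tfrac12(t-t^2)\lambda^2x^2\hbar^{-1}+\cdots$). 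Hence $\beta_t=\hbar\log\cD_t$ is not an element of $\h_A$ for $0<t<1$, and no adjustment of the primitive $\eta$ by exact terms can repair this, since the defect sits in the path $\beta_t$ itself rather than in $\gamma_t$. The correct repair is the one you relegate to your final sentence: abandon the interpolation of the $\cD$'s and run an order-by-order induction in $\lambda$ on the $\beta$'s directly --- if $\beta_0$ and a gauge transform of $\beta_1$ agree modulo $\lambda^n$, their difference $\lambda^n\epsilon$ is $(b+uB+\hbar\Delta)$-closed modulo $\lambda^{n+1}$ (the bracket terms are of higher order since $\beta_i\in\lambda\cdot\h_A[1]$), the relation $\cD_1-\cD_0\equiv\lambda^n\epsilon\hbar^{-1}\cD_0 \pmod{\lambda^{n+1}}$ together with exactness of $\cD_1-\cD_0$ forces $\epsilon$ to be exact, and an exact $\epsilon$ can be gauged away at order $\lambda^n$. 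That induction, not the $\exp$/$\log$ interpolation, should be the actual proof of the converse.
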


Using this lemma, the Maurer-Cartan element $\betaA\in \h_A$ we
obtain a $(b+uB+\hbar\Delta)$-closed element
\[ \exp(\betaA/\hbar) \in \wSym L_-\series{\hbar, \lambda}\] in the
localized and completed symmetric algebra. The fact that the string
vertex is unique up to homotopy shows that the homology class of this
element depends only on the cyclic $A_\infty$-algebra $A$.  We denote
it by
\[ \mbox{$\cD_\abs^A$} = \left [ \mbox{$\exp$}(\betaA/\hbar)\right ] \in
  H_*\big( \mbox{$\wSym$} L_-\series{\hbar, \lambda},
  b+uB+\hbar\Delta\big)\cong H_*(\widehat{\cF}_\hbar) \]
and call it the {\em abstract total descendent potential} of $A$. It
is an even element in
$H_*\big( \wSym L_-\series{\hbar, \lambda}, b+uB+\hbar\Delta\big)$.

Note that the abstract potential $\cD_\abs^A$ is not yet of the form
we want; we'd like it to live in the homology of the same space, but
with respect to the $b$ differential instead of $b+uB+\hbar\Delta$.
Our goal is therefore to use a splitting to construct an isomorphism
\[ H_*\big( \wSym L_-\series{\hbar, \lambda}, b+uB+\hbar\Delta\big)
  \iso H_*\big (  \wSym L_-\series{\hbar, \lambda}, b \big ) = \wSym
  H_-\series{\hbar, \lambda} . \]
In other words we want to homotopy trivialize the operators $B$ and
$\Delta$ on the $b$-homology of $L$.

Given a splitting $s$ of the non-commutative Hodge filtration of $A$
we obtain an induced symplectomorphism
$s:H_*(L)^\Tate \ra H_*(L^\Tate)$, which in turn induces an
isomorphism of Weyl algebras
\[ \Phi^s: \widehat{\mathcal{W}}_\hbar\big( H_*(L)^\Tate\big) \ra
  \widehat{\mathcal{W}}_\hbar \big( H_*(L^\Tate) \big) . \]
Define $\Psi^s$ to be the composition of the three other maps in the
diagram below
\[
  \begin{CD}
    \widehat{\cW}_\hbar\big( H_*(L)^{{\sf Tate}}\big) @>\Phi^s>>
    \widehat{\cW}_\hbar \big( H_*(L^{{\sf Tate}}) \big)=H_*\big(
    \widehat{\cW}_\hbar( L^{{\sf Tate}})\big)\\ @AiAA @VVpV \\
    \widehat{\Sym}_\hbar H_*(L)_-\series{\hbar,\lambda}@>
    \Psi^s>>H_*\big( \widehat{\mbox{$\cF$}}_\hbar\big)\cong
    H_*(\widehat{\Sym}_\hbar L_- \series{\hbar,\lambda})
\end{CD}
\]
It is a graded vector space isomorphism.  Here the left vertical map
is the inclusion of the symmetric algebra generated by the Lagrangian
subspace $H_*(L)_-$ into the Weyl algebra, while the right vertical
map is the canonical quotient map from the Weyl algebra to the Fock
space.  The abstract total descendent potential $ \cD^A_\abs$ lives in
the lower right corner.

\begin{Definition}
  \label{def:main}
  The {\sl total descendent potential} $\cD^{A,s} \in \wSym H_*(L)_-
  \series{\hbar, \lambda}$ of the pair $(A,s)$ is the preimage of the
  abstract total descendent potential $\cD^A_\abs$ under the map
  $\Psi^s$,
  \[ \cD^{A,s} =(\Psi^s)^{-1}( \cD^A_\abs ).\]
  The genus $g$, $n$-point function $F^{A,s}_{g,n}\in \Sym^n H_*(L)_-$
  is defined by the identity
  \[ \sum_{g,n}F^{A,s}_{g,n}\cdot \hbar^{g-1}\,\lambda^{2g-2+n} =
   \ln (\cD^{A,s}).\]
  The right hand term $\ln (\cD^{A,s})$ will be denoted by
  $F^{A,s}$. 
\end{Definition}

\subsection{Givental group action}
Immediate from the above discussion is the compatibility of the
definition of the total descendent potential above with the action of
the Givental group on the space of splittings, which we discuss now.

Denote by $\mathfrak{G}_A$ the Givental group of the pair
$(H_+,\pairing_\Muk)$.  Abstractly, it is the subgroup of
automorphisms of the symplectic vector space
$(H^\Tate, \pairing_{\res})$ preserving the Lagrangian subspace $H_+$
and acting as the identity on $H$.  In explicit terms, an element of
$\mathfrak{G}_A$ is of the form
\[ g =\id+g_1\cdot u+g_2\cdot u^2+\cdots \]
with each $g_j\in \End(H)$ required to satisfy
\[ \langle \,g\cdot x, \, g\cdot y\,\rangle_\res= \langle\, x,\,
  y\,\rangle_\res \quad\mbox{for any } x, y\in H^\Tate. \]
If the set of splittings of the non-commutative Hodge filtration is
nonempty, then it is a left torsor over the Givental group, by letting
an element $g\in \mathfrak{G}_A$ act on a splitting
$s: H_+\ra u^{-1}H_*(L_+)$ by pre-composing with $g^{-1}$:
\[ g\cdot s : H_+\stackrel{g^{-1}}{\longrightarrow} H_+
  \stackrel{s}{\longrightarrow} H_*(L_+).\]
An element $g$ of the Givental group acts naturally on the Fock space
$\wSym H_-\series{\hbar, \lambda}$ where the total descendent
potential lives, via the automorphism of the Weyl algebra induced from
the symplectic transformation $g$:
\[
  \begin{CD}
    \widehat{\mathcal{W}}_\hbar\big (H^\Tate\big ) @>\Phi^g>>
    \widehat{\mathcal{W}}_\hbar\big ( H^\Tate \big ) \\
    @AiAA @VV\pi V \\
    \wSym\, H_-\series{\hbar, \lambda} @>\widehat{g}>> \wSym\,
    H_-\series{\hbar, \lambda}
  \end{CD}
\]

\begin{Proposition}
\label{prop:action}
  The construction of the total descendent potential is compatible
  with the action of the Givental group $\mathfrak{G}_A$.  Explicitly,
  for a splitting $s$ and an element $g\in \mathfrak{G}_A$ we have
  \[ \cD^{A,g\cdot s}= \widehat{g}(\cD^{A,s}).\]
\end{Proposition}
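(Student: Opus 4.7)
The plan is to verify the identity by functoriality, chasing the Givental action through the definitions on both sides. Since $\cD^{A,s} = (\Psi^s)^{-1}(\cD^A_\abs)$ and $\cD^{A,g\cdot s} = (\Psi^{g\cdot s})^{-1}(\cD^A_\abs)$ share the input $\cD^A_\abs$ (which depends only on $A$, not on the splitting), it suffices to prove the identity of maps
\[ \Psi^{g\cdot s}\circ \widehat{g} = \Psi^s \]
on $\wSym H_-\series{\hbar,\lambda}$; evaluating at $\cD^{A,s}$ and inverting $\Psi^{g\cdot s}$ then yields the claim.

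First I would use the functoriality of the assignment $f\mapsto\Phi^f$. The Givental action is $g\cdot s = s\circ g^{-1}$, so the induced isomorphisms of Weyl algebras compose as $\Phi^{g\cdot s} = \Phi^s\circ (\Phi^g)^{-1}$, and substituting into $\Psi^{g\cdot s} = p\circ\Phi^{g\cdot s}\circ i$ gives $\Psi^{g\cdot s} = p\circ \Phi^s\circ (\Phi^g)^{-1}\circ i$.

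The core of the proof is then a small diagram chase. By the very definition of $\widehat g$, the map $\Phi^g\circ i$ agrees with $i\circ\widehat{g}$ modulo the left ideal $(uH_+)_L\subset \widehat{\cW}_\hbar(H^\Tate)$: indeed, $g$ preserves $H_+$, hence $\Phi^g$ preserves $(uH_+)_L$ and descends to $\widehat{\cF}$, the descent being $\widehat g$ under the isomorphism $\wSym H_-\series{\hbar,\lambda}\cong\widehat{\cF}$. Applying $(\Phi^g)^{-1}$ (which also preserves the ideal, since $g^{-1}$ preserves $H_+$) we obtain $i\equiv (\Phi^g)^{-1}\circ i\circ\widehat g\pmod{(uH_+)_L}$. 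Now apply $p\circ \Phi^s$: the splitting condition S1 of Definition~\ref{def:splitting} says $s$ sends $H_+$ into $H_*(L_+)$, so $\Phi^s$ sends $(uH_+)_L$ into $(uH_*(L_+))_L$, which is precisely the left ideal quotiented out by $p$. The congruence therefore becomes the equality $\Psi^s = \Psi^{g\cdot s}\circ\widehat g$, as desired.

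The main subtlety I anticipate concerns the differential, since $\cD^A_\abs$ is defined as a class in the homology of $(\widehat{\cF}, b+uB+\hbar\Delta)$: all of the maps $\Phi^s$, $\Phi^g$, $i$, $p$, $\widehat g$ must be verified to be chain maps with respect to the appropriate differentials, and the diagram chase above has to be interpreted on homology. In particular, the convention of extending $\beta^A\in\h_A^+$ to $\h_A$ by zero in the scalar part (see the footnote in~\ref{subsec:defi}) is precisely what is required at this step; any other extension would modify $\cD^A_\abs$ by a multiplicative factor $\exp(c/\hbar)$, which though invariant under $\widehat g$ would force bookkeeping of an additive constant throughout the diagram chase.
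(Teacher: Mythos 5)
Your proof is correct and follows essentially the same route as the paper's: both arguments reduce the claim to the identity $\Psi^{g\cdot s}\circ\widehat g=\Psi^s$ (equivalently $\Psi^{g\cdot s}=\Psi^s\circ\widehat{g^{-1}}$) and establish it by the same diagram chase — using $\Phi^{g\cdot s}=\Phi^s\circ\Phi^{g^{-1}}$, the congruence $i\pi\equiv\id$ modulo the left ideal $(uH_+)_L$, and the fact that $\Phi^s$ carries that ideal into $\ker p$ because $s$ involves only non-negative powers of $u$. The closing remarks on chain maps and the scalar normalization are reasonable but not needed beyond what the paper already records.
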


\begin{proof} Consider the following diagram:
 \[
   \begin{diagram}[height=2.2em,width=6em,labelstyle=\scriptstyle]
     \widehat{\mathcal{W}}_\hbar\big( H^\Tate\big) &\rTo^{\Phi^{g^{-1}}} &
     \widehat{\mathcal{W}}_\hbar\big( H^\Tate\big) & \rTo^{\Phi^s} &
     H_*\big( \widehat{\mathcal{W}}_\hbar(L^\Tate)\big) \\
     \uTo^i & & \dTo^\pi\uTo_i& & \dTo_p \\
     \wSym\, H_-\series{\hbar, \lambda} & \rTo^{\widehat{g^{-1}}} &
     \wSym\, H_-\series{\hbar, \lambda} &  \rTo^{\quad\quad \Psi^s }&
     H_*\big( \widehat{\cF}_\hbar\big).
   \end{diagram}
 \]
We claim that
\[ p \Phi^{g\cdot s} i = p \Phi^s i \pi \Phi^{g^{-1}} i.\]
The left hand side equals $p \Phi^s \Phi^{g^{-1}} i$. The composition
$i\pi$ in the middle is not the identity, but rather the projection
onto the negative subspace (image of $i$).  Its kernel is by
definition the left ideal generated by the positive subspace
$uH_+$. Thus we have $i\pi=\id \pmod{ uH_+}$.

Now observe that $\Phi^s$ preserves the ideal $uH_+$, just because by
definition a splitting $s$ only contains non-negative powers in the
variable $u$. Followed by the projection map $p$ gives zero, that is,
\[ p\Phi^s i\pi \big( uH_+ \big)=0.\]
We conclude that $p\Phi^s=p \Phi^s i \pi$, and hence
\[p \Phi^{g\cdot s} i=p \Phi^s \Phi^{g^{-1}} i=p \Phi^s i \pi
  \Phi^{g^{-1}} i.\]
This implies that
\begin{align*}
  \cD^{A,g\cdot s} &=(p \Phi^{g\cdot s}i)^{-1}(\cD^A_\abs)\\
       &= (\pi \Phi^{g^{-1}} i)^{-1} (p \Phi^s i)^{-1}(\cD^A_\abs)\\
         &=(\pi \Phi^{g^{-1}}i)^{-1}(\cD^{A,s})= \widehat{g}(\cD^{A,s})
\end{align*}
\end{proof}
\section{\texorpdfstring{The trivializing morphism $\cK$}{The
    trivializing morphism K}}
\label{sec:trivial-1}

In the previous section we constructed tensors $\hbA_{g,k,l} $,
defined as the components of the image of the combinatorial string
vertex under the twisted TCFT map $\rho^{A,\tw}$.  We want to
write explicit formulas expressing the CEI $F^{A,s}_{g,n}$ in terms of
these tensors and a splitting $s$.

In order to accomplish this goal we shall prove that the DGLAs $\h_A$
and $\hh_A$ are both ``homotopically trivial'' by constructing
explicitly computable $L_\infty$ quasi-isomorphisms (that depend on a
splitting $s$) from them to simpler DGLAs $\h_A^\triv$ and
$\hh_A^\triv$, whose differentials only involve $b$, $B$ and $\iota$
(but not $\Delta$) and the bracket is trivial.  The CEI potential is
then the push-forward of the abstract CEI potential defined earlier
under these trivializing quasi-isomorphisms.

This section is devoted to studying the comparison map
$\cK:\h_A \ra \h_A^\triv$; in the next one we shall construct the
corresponding quasi-isomorphism $\hcK$ between the Koszul resolution
DGLA $\hh_A$ and its trivialization $\hh_A^\triv$.

The DGLA structure on
\[ \h_A = \Sym (L_-) [1] \series{\hbar,\lambda} \]
was constructed in~(\ref{subsec:dgla1}) starting from the data of the
cyclic $A_\infty$-algebra $A$.  The differential of $\h_A$ is
$b+uB+\hbar\Delta$ and the Lie bracket is $\{-,-\}$.  Define another
DGLA $\h_A^\triv$ using the same underlying $\Z/2\Z$-graded vector
space, but endowed with differential $b+uB$ and with trivial (zero)
Lie bracket.  In this section we shall prove the following formality
result.

\begin{Theorem}
  \label{thm:K}
  A chain-level splitting $S: L \ra L_+$ of the non-commutative Hodge
  filtration of $A$ (Definition~\ref{def:R-operator}) determines an
  $L_\infty$ quasi-isomorphism
  \[ \cK: \h_A \ra \h_A^{\triv}.\]
  An explicit formula for this quasi-isomorphism, in terms of $S$,
  will be described below in~(\ref{subsec:constr-K}).
\end{Theorem}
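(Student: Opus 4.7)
The strategy is to realize $\cK$ via homological perturbation theory: the chain-level splitting $S$ provides precisely the contracting homotopy needed to trivialize the BV differential $\hbar\Delta$. First I would extract from $S$ an odd operator $P_0: L_- \to L_+[1]$ built using the higher residue pairing, with condition (S2) guaranteeing that $P_0$ is the propagator of a Lagrangian splitting of the symplectic space $L^\Tate$. Extending $P_0$ by the Leibniz rule as a second-order derivation would yield an operator $P$ on $\Sym(L_-)\series{\hbar,\lambda}$ satisfying $[\,b+uB,\,P\,] = \hbar\Delta$ on $\Sym^{\leq 2}(L_-)$ modulo terms that are themselves brackets. This is exactly the contraction data required by standard BV-formality transfer.

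Given $P$, I would assemble the $L_\infty$ morphism $\cK = (\cK_1, \cK_2, \ldots)$ by the familiar tree-sum construction. Set $\cK_1 = \id$ on the common underlying graded vector space (tautologically a chain map for $b+uB$), and for $n \geq 2$ define
\[ \cK_n : \Sym^n(\h_A) \lra \h_A^{\triv} \]
as a sum over rooted trees with $n$ input leaves and one root output, where each interior vertex is decorated by the Lie bracket $\{-,-\}$ of $\h_A$, each interior edge carries the propagator $P$, and leaves carry the inputs. This is the standard tree-level formality formula for BV algebras with trivialized circle action, in the spirit of Kontsevich--Soibelman and Drummond-Cole--Vallette. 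I would then verify the $L_\infty$ axioms by the usual boundary identity on the space of rooted trees: contracting an interior edge produces a bracket term, while differentiating a vertex produces the $\hbar\Delta$ contribution which is absorbed by $[\,b+uB,\,P\,] = \hbar\Delta + (\text{bracket corrections})$. Signs would be dictated by the shifted conventions of Section~\ref{subsec:signs} and the sign-diagram calculus already used in Section~\ref{sec:dgla}. Quasi-isomorphism is then immediate: $\cK_1$ is the identity on the $E_1$-page of the $u$-adic spectral sequence, and Hodge--de Rham degeneration $(\dagger)$ upgrades this to a quasi-isomorphism for the full differential $b+uB+\hbar\Delta$.

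The main obstacle will be the construction and control of $P$, i.e.\ converting the cohomological datum $s$ into a strict chain-level propagator whose failure to satisfy $[\,b+uB,\,P\,]=\hbar\Delta$ on higher symmetric powers is expressible as a bracket. This is precisely what Idea~(2) of the introduction flags as the delicate point: the combinatorics of partially directed graphs was introduced to record exactly those higher corrections, and it is the origin of the "partially directed" decorations showing up in the Feynman sums of Theorem~\ref{thm:intro}. Once $P$ is built with the correct signs and $(\hbar,\lambda)$-adic convergence of the tree sums is verified, the $L_\infty$ axioms and the quasi-isomorphism conclusion reduce to standard operadic bookkeeping.
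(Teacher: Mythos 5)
There is a genuine gap, and it sits exactly where you place the weight of the argument: the combinatorics of the Feynman sum. Your proposed formula is a \emph{tree-level} expansion with $\cK_1=\id$, interior vertices decorated by the bracket, and interior edges decorated by a propagator. But $\id$ is not a chain map from $(\h_A,\,b+uB+\hbar\Delta)$ to $(\h_A^\triv,\,b+uB)$: the two differentials differ by $\hbar\Delta$, which does not vanish. The correct linear term, as the paper computes, is $\cK_1=e^{\hbar\Delta^{H^{\sym}}}$, where $H^{\sym}:\Sym^2 L_-\to\bbK$ is the symmetrized bounding homotopy with $[b+uB,H^{\sym}]=\Omega$ built from $S$ and its inverse $R$ (Proposition~\ref{prop:homotopy}), extended to all of $\Sym(L_-)$ as a second-order differential operator $\Delta^{H^{\sym}}$. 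Producing this exponential forces the Feynman sum to range over \emph{graphs with loops} (each loop edge contributing one factor of $\hbar\Delta^{H^{\sym}}$), not over rooted trees; a tree sum can never absorb the $\hbar\Delta$ term of the source differential, so your $L_\infty$ identities cannot close up. Relatedly, in the paper's graphs the \emph{inputs} $\gamma_i$ sit at the vertices and $H^{\sym}$ labels every edge --- there are no bracket-decorated interior vertices --- and the identity $[b+uB,\Delta^{H^{\sym}}]=\Delta$ holds exactly, with no ``bracket corrections''; the bracket $\{-,-\}^{H^{\sym}}$ enters only as the failure of $\Delta^{H^{\sym}}$ to be a derivation, in parallel with how $\{-,-\}$ is defined from $\Delta$.

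Two smaller points. First, your propagator $P_0:L_-\to L_+[1]$ has the wrong type for this theorem: to trivialize $\Delta$ on $\Sym(L_-)$ one needs a bilinear form $L_-^{\otimes 2}\to\bbK$ (namely $H$); the operator you describe is essentially the homotopy $F$ used in Section~\ref{sec:trivial-2} for the Koszul resolution $\hh_A$, and likewise the partially directed graphs you invoke belong to the construction of $\hcK$, not of $\cK$ (the graphs here are ordinary labeled graphs). Second, the paper does not verify the $L_\infty$ identities by a boundary-of-trees argument: it builds a pseudo-isotopy of DGLAs on $\h_A\otimes\Omega^{\bullet}_{[0,1]}$ with differential $b+uB+\hbar t\Delta+d_{DR}+\hbar\Delta^{H^{\sym}}dt$, invokes Fukaya's integration procedure (which outputs an $L_\infty$ morphism characterized by an explicit ODE), and then shows that the graph sum satisfies the same ODE via groupoid integration. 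Once the correct $\cK_1$ is in place, the quasi-isomorphism claim is immediate --- $e^{\hbar\Delta^{H^{\sym}}}$ is invertible --- with no need for the spectral-sequence argument you sketch.
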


As an immediate corollary we obtain the following result that was used
to define CEI~(\ref{subsec:defi}).

\begin{Lemma}
  \label{lem:extension}
  The element $\beta^A\in \h_A^+$ of~(\ref{subsec:defi}) is a
  Maurer-Cartan element in $\h_A$ under the canonical inclusion of
  dg-vector spaces $\h_A^+\subset \h_A$.
\end{Lemma}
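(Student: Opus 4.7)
The plan is to isolate the scalar obstruction to the Maurer--Cartan equation for the zero-scalar lift $\tilde\beta^A \in \h_A$ and to show it must vanish by transport through the $L_\infty$ quasi-isomorphism $\cK$ of Theorem~\ref{thm:K}.

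First I would check that $I:=\bbK[1]\series{\hbar,\lambda}$ is a central sub-DGLA of $\h_A$: its differential $b+uB+\hbar\Delta$ annihilates $I$ (each of $b,B,\Delta$ does), and the bracket of a scalar with anything vanishes because $\Delta$ is a second-order derivation that kills scalars. Since $\beta^A$ satisfies the Maurer--Cartan equation in the quotient $\h_A^+$, the element
\[ c := (b+uB+\hbar\Delta)\tilde\beta^A + \tfrac12\{\tilde\beta^A,\tilde\beta^A\} \]
lies in $I$, and is independent of the chosen lift from $\h_A^+$ (two lifts differ by a central element, which commutes with everything and is annihilated by the differential). The lemma therefore reduces to showing $c=0$.

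Next I would use the explicit formula for $\cK$ to be developed in~(\ref{subsec:constr-K}) to verify two compatibility properties: (a)~$\cK_1$ acts as the identity on $I$; and (b)~each higher component $\cK_n$ with $n\geq 2$ vanishes as soon as one of its inputs lies in $I$. Both should follow because the ingredients of $\cK$ --- the splitting $S$, the BV operator $\Delta$, and the contraction operators --- all annihilate the central subalgebra, so to leading order $\cK_1$ is the identity and every correction picks up at least one factor of $\Delta$ or a contraction that kills a scalar input. Properties (a) and (b) imply that $\cK$ descends to an $L_\infty$ quasi-isomorphism $\cK^+ : \h_A^+ \to \h_A^{\triv,+}$ between the quotient DGLAs, and that $\cK_*(\tilde\beta^A) = \cK^+_*(\beta^A) \in \h_A^{\triv,+}$.

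Finally, I would apply the standard $L_\infty$-morphism identity, which for a morphism $f$ from a DGLA into an abelian DGLA is valid for every element $\beta$ (not only Maurer--Cartan ones):
\[ d'\, f_*(\beta) \;=\; \sum_{n\geq 0} \frac{1}{n!}\, f_{n+1}\bigl((d\beta + \tfrac12\{\beta,\beta\});\, \beta^{\otimes n}\bigr). \]
Applied with $f=\cK$ and $\beta=\tilde\beta^A$, properties (a) and (b) force every term with $n\geq 1$ to vanish and reduce the $n=0$ term to $\cK_1(c)=c$; hence $(b+uB)\cK_*(\tilde\beta^A) = c$. On the other hand $\beta^A$ is Maurer--Cartan in $\h_A^+$, so $\cK^+_*(\beta^A)$ is Maurer--Cartan, hence $(b+uB)$-closed, in the trivial-bracket DGLA $\h_A^{\triv,+}$; thus the left-hand side lies in $\h_A^{\triv,+}$ and vanishes. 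Since $I \cap \h_A^{\triv,+}=0$, we conclude $c=0$. The main obstacle is the verification of properties (a) and (b) by direct inspection of the construction of $\cK$ in Section~\ref{subsec:constr-K}, together with a careful bookkeeping of the shifted sign convention of~(\ref{subsec:signs}) in the $L_\infty$ identity above.
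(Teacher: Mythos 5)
Your argument is correct in substance but takes a genuinely different route from the paper's. The paper argues entirely at the level of Maurer--Cartan moduli: it lifts the Maurer--Cartan element $\cK^+_*(\beta^A)$ from $(\h_A^+)^\triv$ to $\h_A^\triv$ (easy, since there the equation is just $(b+uB)$-closedness and $b+uB$ preserves symmetric-power degree), pulls it back through the quasi-isomorphism $\cK$ to a Maurer--Cartan element $\alpha\in\h_A$ with $\pi(\alpha)$ gauge equivalent to $\beta^A$, applies a lifted gauge transformation $\exp(\tilde g)$ to make the projection equal to $\beta^A$ on the nose, and finally observes that changing the scalar part of a Maurer--Cartan element is harmless because $I=\bbK[1]\series{\hbar,\lambda}$ is central and annihilated by the differential. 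You instead isolate the scalar-valued curvature $c$ of the zero-scalar lift and kill it with the $L_\infty$-identity for morphisms into abelian targets. This is more economical --- no gauge transformations and no appeal to surjectivity of the map on Maurer--Cartan sets --- at the price of needing your structural properties (a) and (b) of the graph-sum formula for $\cK$. Those do hold, but the correct reason for (b) is not that ``$\Delta$ and the contractions kill scalars'': it is that the graphs in the Feynman sum for $\cK_m$ are connected, so for $m\geq 2$ every vertex has positive valency, and a $\Sym^0$-input (which would have to decorate a $0$-valent vertex) cannot occur.

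One small repair is needed at the end. Properties (a) and (b) give $\pi\circ\cK_* = \cK^+_*\circ\pi$, but they do \emph{not} give $\cK_*(\tilde\beta^A)=\cK^+_*(\beta^A)$ as elements of $\h_A^\triv$ under the canonical splitting: the \emph{output} of $\cK_m$ for $m\geq 2$ can have a nonzero scalar component (for example, the two-vertex graph with a single edge and no leaves sends a pair of elements of $L_-$ to the scalar $H^\sym$ of them). So you cannot conclude that $(b+uB)\cK_*(\tilde\beta^A)$ lies in $\h_A^{\triv,+}$ by identifying $\cK_*(\tilde\beta^A)$ with a Maurer--Cartan element of the quotient. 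The fix is one line: $b+uB$ preserves each $\Sym^k L_-$ and annihilates $\Sym^0$, so $(b+uB)$ of \emph{any} element has vanishing scalar component; since this equals the purely scalar $c$, indeed $c=0$.
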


\begin{proof}
  From the explicit construction of $\cK$ (see below) it is clear
  that $\cK$ restricts to an $L_\infty$-morphism
  $\cK^+$ which fits in the commutative
  diagram below
  \[
    \begin{CD} \h_A @>\pi>> \h_A^+ @>>> 0 \\
      @V \cK VV @VV\cK^+ V @.\\
      \h_A^\triv @>\pi>> (\h_A^+)^\triv @>>> 0. 
    \end{CD}
  \]
  Along the bottom row, since the Maurer-Cartan equation is simply
  $(b+uB)x=0$ and the map $\pi$ is surjective at chain level, it is
  obvious that the Maurer-Cartan element $\cK^+(\beta^A)$ can be
  lifted to $\h_A^\triv$. The two vertical maps are both $L_\infty$
  quasi-isomorphisms, therefore there exists a Maurer-Cartan element
  $\alpha\in \h_A$ such that $\pi(\alpha)$ is gauge equivalent to
  $\beta^A$, i.e., there exists a degree one element $g\in \h_A^+$ for
  which $\exp(g)\cdot\pi(\alpha)=\beta^A$. Since $\pi$ is surjective,
  let $\widetilde{g}\in \h_A$ be a lift of $g$ in $\h_A$.  Then the
  Maurer-Cartan element $\exp(\widetilde{g})\cdot\alpha$ lifts the
  Maurer-cartan element $\beta^A$.  Modifying such an element in its
  scalar part does not affect the property of it being Maurer-Cartan
  (as $\bbK[1]\series{\hbar,\lambda}$ is central in $\h_A$), so we can
  choose the lift to have trivial scalar part.
\end{proof}

\subsection{Chain-level splittings and their inverses}

\begin{Definition}
  \label{def:R-operator}
  A chain-level splitting of the Hodge filtration is a map of
  complexes $S: L \ra L_+$ of the form $S= \id +S_1 u+ S_2 u^2
  +\cdots$ with each $S_n \in \End(L)$.  A chain-level splitting $S$
  is called symplectic if it satisfies 
  \[ \langle S \alpha, S \beta \rangle_{\sf hres} = \langle \alpha,
    \beta \rangle_{\sf Muk}\]
  for any $\alpha,\beta\in L$. It is called homologically symplectic
  if the above equation holds for $\alpha,\beta \in \ker b$, i.e. the
  induced map  $S: H_*(L) \ra H_*(L_+)$ satisfies the symplectic condition.
\end{Definition}

In explicit terms the symplectic condition means that the coefficients
of all higher powers of $u$ in the higher residue pairing above
vanish, i.e., 
\begin{align*}
  \sum_{j=0}^N (-1)^j \langle S_j \alpha, S_{N-j} \beta \rangle_{\sf
  Muk} =0, \;\forall N \geq 1, \; \forall \alpha,\beta \in L.
\end{align*}

A chain-level splitting $S: L \ra L_+$ can be extended by
$u$-linearity to an isomorphism of chain complexes
$\big(L\laurent{u}, b\big) \ra L^{\sf Tate}$ which we shall still
denote by $S$.  Both sides carry residue pairings, and the
symplectic condition above is equivalent to saying that $S$ is a
symplectomorphism.  Indeed,  for any $x,y\in L\laurent{u}$ we have
\begin{align*}
  \langle S x, S y \rangle_{\res}  & =  \mbox{ the
          coefficient of $u$ in } \langle S x, S y \rangle_{\hres} \\
    & =  \mbox{ the coefficient of $u$ in } \langle x, y
      \rangle_{\hres} \\
    & = \langle x, y \rangle_\res
\end{align*}

The following lemma proves that any splitting $s: H_*(L) \ra H_*(L_+)$
at homology level, in the sense of Definition~\ref{def:splitting},
lifts to a homologically symplectic chain-level splitting
$S:L \ra L_+$.

\begin{Lemma}
  \label{lem:existence-R}
  Let $s: H_*(L) \ra H_*(L_+)$ be a splitting of the Hodge
  filtration.  Then there exists a chain-level splitting $S: L \ra L_+$
  that lifts the map $s$. In particular, since $s$ is always assumed by
  definition to be symplectic, its lift $S$ will be homologically symplectic.
\end{Lemma}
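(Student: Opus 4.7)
I would construct $S$ recursively, choosing at each stage $S_n\in\End(L)$ so as to satisfy the chain-map condition $[b,S_n]=-BS_{n-1}$ and so that the partial sum $\sum_{k\le n}S_k u^k\colon L\to L_+/u^{n+1}$ induces on homology the reduction of $s$ modulo $u^{n+1}$. The base case is trivial with $S_0=\id$. Before starting, I fix a strong deformation retract $(i,p,h)$ of $(L,b)$ onto $H=H_*(L)$ satisfying $bh+hb=\id_L-ip$, $pi=\id_H$, $hi=0$, $ph=0$, $h^2=0$; such data exists in characteristic zero. The contraction identifies $H^*(\End(L),[b,-])$ with $\End(H)$, the cohomology class of a $[b,-]$-cycle $\phi$ being $p\phi i$.

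For the inductive step, assume $S_0,\ldots,S_{n-1}$ are in hand. A direct calculation using $B^2=0$, $bB+Bb=0$, and the previous equations $[b,S_k]=-BS_{k-1}$ shows that $BS_{n-1}$ is $[b,-]$-closed, so the obstruction to solving $[b,S_n]=-BS_{n-1}$ lies in $\End(H)$. Using the fact that $\sum_{k<n}S_k u^k$ is a chain map to $L_+/u^n$, together with the homotopy $h$, this obstruction can be rewritten (after inductive unravelling) as a composition involving the induced Connes operator $\overline{B}\colon H\to H$, which vanishes by the Hodge--de Rham degeneration hypothesis.

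Once a particular $S_n^{(0)}$ with $[b,S_n^{(0)}]=-BS_{n-1}$ is selected, the remaining freedom is to add any $[b,-]$-cycle $T$, whose class $\overline T\in\End(H)$ can be prescribed arbitrarily. A short computation shows that modifying $S_n$ by such a $T$ changes the $u^n$-coefficient of the induced homology-level lift by exactly $\overline T$, so we can arrange agreement with $s$ modulo $u^{n+1}$ and close the induction.

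The main obstacle is verifying that the obstruction class in $\End(H)$ actually vanishes at every stage. The cleanest route is to apply the homological perturbation lemma to the contraction $(i,p,h)$ with perturbation $uB$, obtaining a new contraction of $(L_+,b+uB)$ onto $(H\series{u},D)$ for some perturbed differential $D$. The Hodge--de Rham degeneration hypothesis is equivalent to $D=0$, and this collapse packages the full infinite tower of obstructions $[b,S_n]=-BS_{n-1}$ as simultaneously solvable. Tracing through the explicit perturbation-lemma formulas yields the vanishing statement needed at each inductive step, and the homological symplecticity of $S$ is then automatic because it coincides with that of the given $s$ on homology.
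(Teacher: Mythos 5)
Your plan is correct and follows essentially the same route as the paper's proof: an order-by-order construction of the $S_n$, with the obstruction to solving $[b,S_n]=-BS_{n-1}$ living in $\End(H_*(L))$, identified with the higher differentials of the $u$-filtration spectral sequence (your perturbation-lemma transferred differential $D$ is just a repackaging of those), and killed by Hodge--de Rham degeneration; the residual freedom of adding a $[b,-]$-cycle is then used, exactly as in the paper, to match the given $s$ modulo $u^{n+1}$. The final remark that homological symplecticity is inherited from $s$ also agrees with the paper, which treats it as immediate.
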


\begin{proof}
  We shall construct a chain-level lift $S$ of $s$, order by order in
  the powers of $u$.  As a first step we need to find $S_1\in \End(L)$
  satisfying $[b,S_1]=-B$.  Since we work over a field there exist
  maps of complexes
  \[ i: H_*(L) \ra L \;\; \mbox{ and } \;\; p: L \ra H_*(L) \]
  such that $pi=\id$. This induces a quasi-isomorphism
  \[ I: \End^*(H_*(L)) \ra \End^*(L), \;\; \phi \mapsto i \phi p.\]
  The map $B$ induces the zero map on $H_*(L, b)$, because we assume
  that the Hodge-de Rham spectral sequence degenerates at ${}^1 E$.
  Therefore $B$ induces an exact operator on $\End^*(L)$, so we
  can choose an endomorphism $S_1'$ of $L$ so that $[b,S_1']=B$.
  Now we can modify $S_1'$ by an element in the image $\imag (I)$ to
  obtain an endomorphism $S_1$ so that $\id+uS_1$ lifts the desired
  splitting map $s$, modulo $(u^2)$.

  Now assume by induction that we have already found a lift
  $\id+uS_1+\cdots+u^{n-1} S_{n-1}$ of $s$, modulo $(u^n)$.  We are
  looking for $S_n\in \End^*(L)$ satisfying
  \[ [ b, S_n] = -BS_{n-1}.\]
  By construction we have
  \[ [b, BS_{n-1}]= - B [b,S_{n-1}] = BBS_{n-2} =0.\]
  Hence $BS_{n-1}$ induces a map in homology $\End^*(H_*(L))$. Observe
  that this map is exactly the $n$-th page map of the spectral
  sequence associated to the $u$-filtration on $L_+$, and hence again
  this map vanishes by the Hodge-to-de-Rham degeneration
  property.  Thus we can find $S_n'$ satisfying
  $[b,S_n']=-BS_{n-1}$ and we can modify it by elements in the image
  $\imag(I)$ to obtain the desired endomorphism $S_n$.
\end{proof}

Let $S$ be a chain-level splitting of the Hodge filtration, extended
by linearity to a quasi-isomorphism  
\[ S: \big( L\series{u}, b\big) \ra \big(L\series{u}, b+uB\big).\]
Since the $u^0$ component of $S$ is the identity we can write the
inverse $R$ of $S$ explicitly, in the form
\[ R= \id+R_1u+R_2u^2+\cdots,\]
where the $R_i$'s are endomorphisms of the graded vector space $L$.
More precisely, we enforce the sequence of identities
\[ \sum_{i+j=k} R_iS_j = \begin{cases} \id & \mbox{ if } k=0\\ 0 &
    \mbox{ if } k\geq 1
  \end{cases}. \]
These can be recursively solved to get formulas for the operators 
$R_j$'s in terms of the $S_i$'s. For example we have
\[ R_1=-S_1, \;\; R_2= -S_2+S_1S_1, \;\;\mbox{etc.} \]
This gives the desired inverse chain map
\[ R: \big(L\series{u}, b+uB\big) \ra \big( L\series{u}, b\big). \]
 Since $S$ and $R$ are chain maps, we have
\[ [b,S_n]=-BS_{n-1}, \;\; [b,R_n]=R_{n-1}B \;\; \mbox{for
    all}\;\;n\geq 1.\]

\subsection{A first order trivialization}
\label{subsec:firsttriv}
Recall that, by construction, the restriction of the operator
$\Delta: \h_A\ra \h_A$ to $\Sym^2 L_-$ is given by 
\[ \Omega(x,y) =\langle Bx_0, y_0\rangle_{\sf Muk}, \]
see~(\ref{subsec:dgla1}).  Having a chain-level splitting $S$ of the
Hodge filtration (along with its inverse $R$) allows us to
homotopy-trivialize $\Omega$, which in turn will allow us to
homotopy-trivialize $\Delta$.  This is the first step in constructing
the quasi-isomorphism $\cK$. 

Using $S$ and $R$ we can define an even map 
\[ H : L_- \otimes L_- \ra \bbK \]
which will be a homotopy trivialization of $\Omega$.  It is the
defined by takings its components
\[ H_{i,j}: u^{-i} L\otimes u^{-j} L \ra \bbK \]
to be, for $i,j \geq 0$, given by
\[H_{i,j}( u^{-i} x, u^{-j} y ) = \langle\, (-1)^j \sum_{l=0}^j S_l
  R_{i+j+1-l} \,x,\, y\,\rangle.\]
Equivalently, we can define the map $H$ more compactly by the formula
\[H(\alpha, \beta)=- \langle S\, \tau_{\geq 1}\, R (\alpha),
  \beta\rangle_\res\]
for $\alpha, \beta\in L_-$.  Here$\tau_{\geq 1}: L^\Tate \ra u \cdot
L_+$ is the projection onto the part with positive powers of $u$.

\begin{Proposition}
  \label{prop:homotopy}
  For any $\alpha, \beta\in L_-$ we have
  \[ H\big( (b+uB)\alpha,\beta\big) + (-1)^{|\alpha|} H\big( \alpha,
    (b+uB)\beta\big) = -\Omega(\alpha,\beta). \]
  In other words, the operator $H$ is a bounding homotopy of $\Omega$:
  $[b+uB, H] = \Omega$.
\end{Proposition}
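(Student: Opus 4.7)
The plan is to lift everything to the Tate complex, where the operator $T := S\,\tau_{\geq 1}\,R$ is globally defined and commutes with the circle-equivariant differential $b+uB$. Indeed, the identities $[b,S_n] = -BS_{n-1}$ and $[b,R_n] = R_{n-1}B$, assembled by $u$-linearity, say precisely that $R:L^\Tate \to (L\laurent{u},b)$ and $S:(L\laurent{u},b) \to L^\Tate$ are chain maps; since the projection $\tau_{\geq 1}$ is manifestly $b$-linear, composing these three facts yields $[b+uB, T] = 0$ on $L\laurent{u}$.

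Next I would account for the difference between the $(b+uB)$-differential on $L^\Tate$ and the quotient differential $\partial_-$ inherited by $L_-$. For $\alpha = \alpha_0 + \alpha_{-1}u^{-1} + \cdots \in L_-$, lifted to $L^\Tate$ via the inclusion of non-positive powers, the only strictly positive-power term appearing in $(b+uB)\alpha$ is $B\alpha_0 \cdot u$, so $(b+uB)\alpha = \partial_-\alpha + B\alpha_0\,u$. A short computation using $SR = \id$ shows $T(B\alpha_0\,u) = B\alpha_0\,u$: the element $B\alpha_0\,u$ already lies in $u\cdot L_+$, so $\tau_{\geq 1}$ acts trivially after applying $R$, and then $SR$ is the identity. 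Together with the first step, this gives
\[ T(\partial_-\alpha) \;=\; (b+uB)\,T\alpha \,-\, B\alpha_0\,u. \]

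To assemble the two terms of $H(\partial_-\alpha,\beta) + (-1)^{|\alpha|}H(\alpha,\partial_-\beta)$ I would use the $(b+uB)$-compatibility of the residue pairing, which follows immediately from the $B$-self-adjointness of $\pairing_\Muk$. The contributions involving $(b+uB)T\alpha$ then cancel between the two terms; the potential cross-term $\langle T\alpha, B\beta_0\,u\rangle_\res$ vanishes because $T\alpha \in u\cdot L_+$ has only strictly positive $u$-powers, so the residue pairing with $B\beta_0\,u$ has no $u^1$ coefficient to extract. What survives is exactly $\langle B\alpha_0\,u, \beta\rangle_\res = \langle B\alpha_0, \beta_0\rangle_\Muk = \Omega(\alpha,\beta)$, up to the overall sign dictated by the definition of $H$.

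The main obstacle is signs: one must track the shifted degree conventions of~(\ref{subsec:signs}) together with the sesquilinearity sign $(-1)^l$ in the residue pairing and the $(-1)^{|x|}$ appearing in the self-adjointness of $B$. These must combine to yield the asserted $-\Omega(\alpha,\beta)$ on the right-hand side and to match the explicit component formula given for $H_{i,j}$ immediately before the statement.
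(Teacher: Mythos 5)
Your argument is correct, and it is essentially the ``direct calculation using the explicit formula for $H$ and properties of $S$ and $R$'' that the paper's proof consists of (the paper gives no details); your organization of it --- $T=S\,\tau_{\geq 1}\,R$ commutes with the Tate differential, the lift of the quotient differential of $L_-$ differs from the Tate differential by $uB\alpha_0$, $T$ restricts to the identity on $u\cdot L_+$, and the residue pairing is $(b+uB)$-compatible by $B$-self-adjointness --- is a clean and complete way to carry it out. The one step you defer, the final sign, is worth settling rather than leaving open: running your computation with the compact formula $H(\alpha,\beta)=-\langle S\,\tau_{\geq 1}\,R(\alpha),\beta\rangle_\res$ verbatim, the surviving term is $+\langle uB\alpha_0,\beta\rangle_\res=+\Omega(\alpha,\beta)$, not $-\Omega(\alpha,\beta)$. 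This is not a flaw in your argument but an internal inconsistency in the paper: expanding $-\langle S\,\tau_{\geq 1}\,R(u^{-i}x),u^{-j}y\rangle_\res$ yields $-(-1)^j\sum_{l=0}^{j}\langle S_lR_{i+j+1-l}\,x,y\rangle$, which is the \emph{negative} of the displayed componentwise formula for $H_{i,j}$; the stated identity $[b+uB,H]=\Omega$ is the one that holds for the componentwise normalization. So your structure is sound --- you only need to commit to the $H_{i,j}$ normalization (or flip the sign in the compact formula) to land on the asserted $-\Omega(\alpha,\beta)$.
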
 

\begin{proof}
  The proof is a direct calculation using the explicit formula for $H$
  and properties of $S$ and $R$. 
\end{proof}

Having constructed $H$ we can now define a first order trivialization
of $\Delta$ as follows.  Symmetrize the homotopy operator $H$ to
obtain an operator $H^{\sym}: \Sym^2 L_- \ra \bbK$,
\[ H^{\sym}(x\cdot y) = \frac{1}{2} \left (H(x,y) + (-1)^{|x||y|}
    H(y,x)\right ).  \]

Since $\Omega$ is by definition symmetric~(\ref{subsec:circle})
we conclude that 
\[ [b+uB,H^{\sym}] = \Omega.\]
Extending $H^\sym$ as a second order differential operator to the full
symmetric algebra $\Sym (L_-)$ yields a new operator which we denote
by $\Delta^{H^\sym}$:
\[ \Delta^{H^{\sym}}: \Sym (L_-) \ra \Sym (L_-). \]
It is now obvious that $\Delta^{H^\sym}$ homotopy-trivializes
$\Delta$: both sides are second order differential operators, thus it
suffices to prove that they are equal on $\Sym^{\leq 2} L_-$, in which
case this is exactly the equation $ [b+uB,H^{\sym}] = \Omega$.

For the purposes of this section any chain-level splitting $S$ (not
necessarily homologically symplectic) works well. However, this is not
the case in the next section where the symmetric property of $H$ is
essential. In order to deal with this problem, we prove two additional
lemmas that will be used later.

\begin{Lemma}
  \label{lem:H-symmetric}
  If $S$ is a symplectic chain-level splitting, then the homotopy
  operator $H$ associated with a chain-level splitting $S$ is
  symmetric, i.e. $H=H^\sym$.
\end{Lemma}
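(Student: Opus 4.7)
The plan is to rewrite $H$ using the adjunction induced by the symplectic assumption on $S$, and then exploit the Lagrangian structure of the decomposition $L\laurent{u} = L_- \oplus u\cdot L_+$ with respect to $\pairing_\res$. Since $S$ is symplectic, its $u$-linear extension is a symplectomorphism of $(L\laurent{u},\pairing_\res)$, and hence so is its inverse $R$. Both operators are of even degree (they are of the form $\id + O(u)$), so the standard adjunction yields $\langle Sx, y\rangle_\res = \langle x, Ry\rangle_\res$ with no Koszul sign. Applied to the compact formula for $H$ this rewrites it as
\[ H(\alpha,\beta) = -\langle \tau_{\geq 1}R\alpha,\, R\beta\rangle_\res. \]

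Next I would observe that $L_-$ and $u\cdot L_+$ are isotropic for $\pairing_\res$, since $\langle u^k x, u^l y\rangle_\res$ requires $k+l=1$, which cannot be satisfied with $k,l\le 0$ or with $k,l\ge 1$. Splitting $R\beta = \tau_{\leq 0}R\beta + \tau_{\geq 1}R\beta$, the contribution $\langle \tau_{\geq 1}R\alpha, \tau_{\geq 1}R\beta\rangle_\res$ vanishes because both arguments lie in the Lagrangian $u\cdot L_+$, so
\[ H(\alpha,\beta) = -\langle \tau_{\geq 1}R\alpha,\, \tau_{\leq 0}R\beta\rangle_\res. \]

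I would then use the symplectic invariance once more: because $\alpha,\beta\in L_-$ we have $\langle R\alpha, R\beta\rangle_\res = \langle \alpha,\beta\rangle_\res = 0$. Expanding both factors via $\tau_{\leq 0}+\tau_{\geq 1}$ and discarding the two pairings on Lagrangian subspaces gives
\[ \langle \tau_{\geq 1}R\alpha, \tau_{\leq 0}R\beta\rangle_\res + \langle \tau_{\leq 0}R\alpha, \tau_{\geq 1}R\beta\rangle_\res = 0, \]
so $H(\alpha,\beta) = \langle \tau_{\leq 0}R\alpha, \tau_{\geq 1}R\beta\rangle_\res$. Running the same derivation with the roles of $\alpha$ and $\beta$ exchanged yields $H(\beta,\alpha) = -\langle \tau_{\geq 1}R\beta, \tau_{\leq 0}R\alpha\rangle_\res$, and the graded anti-symmetry of $\pairing_\res$ relates the two expressions through a sign of $-(-1)^{|\alpha||\beta|}$, giving the desired identity $H(\alpha,\beta) = (-1)^{|\alpha||\beta|} H(\beta,\alpha)$, i.e.\ $H = H^\sym$.

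The only delicate point is keeping track of the Koszul signs in the final anti-symmetry step; the geometric content --- the symplectic adjunction and the vanishing of $\pairing_\res$ on each of the two Lagrangian subspaces --- does essentially all the work, so I do not anticipate any substantive obstacle.
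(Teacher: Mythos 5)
Your argument is correct. The paper itself only says ``direct calculation using the definitions,'' and the intended computation is presumably the component-wise one: expand $H_{i,j}$ and $H_{j,i}$ and cancel using the identities $\sum_{l=0}^N(-1)^l\langle S_l\alpha,S_{N-l}\beta\rangle_{\Muk}=0$ coming from the symplectic condition. What you do instead is package exactly that information structurally: the symplectic condition says the $u$-linear extension of $S$ (hence $R$) preserves $\pairing_\res$ on $L\laurent{u}$, and then the isotropy of the two complementary subspaces $L_-$ and $u\cdot L_+$ does the bookkeeping for you, reducing the graded symmetry of $H$ to the graded anti-symmetry of $\pairing_\res$. Each step checks out: the adjunction $\langle Sx,y\rangle_\res=\langle x,Ry\rangle_\res$ is sign-free because $S$ and $R$ are even; the vanishing of the two ``Lagrangian'' pairings is immediate from the condition $k+l=1$ in the definition of $\pairing_\res$; and the final sign $\langle b,a\rangle_\res=-(-1)^{|a||b|}\langle a,b\rangle_\res$ combined with the minus sign from the isotropy identity produces exactly $H(\alpha,\beta)=(-1)^{|\alpha||\beta|}H(\beta,\alpha)$, which is the graded symmetry encoded in $H=H^\sym$. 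One cosmetic remark: the paper's compact formula $H(\alpha,\beta)=-\langle S\,\tau_{\geq 1}\,R(\alpha),\beta\rangle_\res$ appears to differ by an overall sign from the displayed components $H_{i,j}$; this does not affect your proof, since an overall sign is irrelevant to symmetry, but you should be aware of it if you ever need the normalization of $H$ itself.
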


\begin{proof} This is a direct calculation using the definitions.
\end{proof}

In general we do not know if a chain-level symplectic splitting exists
lifting a given splitting on homology. However, the following lemma
shows that even if we use a non-symplectic lift, still the homotopy
operator $H$ is always symmetric up to homotopy (if the splitting is
homologically symplectic).

\begin{Lemma}
\label{lem:delta}
Assume that $S$ is a homologically symplectic chain-level
splitting. Then there exists an odd operator $\delta: L_- \otimes L_-
\ra \mathbb{K}$ such that 
\[[ b+uB,\delta] = H- H^\sym.\]
\end{Lemma}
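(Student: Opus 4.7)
The plan is to show that $H-H^{\sym}$ is a $(b+uB)$-coboundary by verifying it is a cocycle, checking it vanishes on cycles, and invoking Hodge--de Rham degeneration to deduce exactness. The closedness is immediate: by Proposition~\ref{prop:homotopy} we have $[b+uB,H]=\Omega$, and a direct check (using the sign-self-adjointness $\langle Bx_0, y_0\rangle = (-1)^{|x_0|}\langle x_0, By_0\rangle$ combined with the graded symmetry of the Mukai pairing) shows that $\Omega$ is graded symmetric. Since symmetrization commutes with $[b+uB,-]$, we obtain $[b+uB, H-H^{\sym}] = \Omega - \Omega^{\sym} = 0$.

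Next I would show that $H-H^{\sym}$ vanishes on $(b+uB)$-cycles in $L_-\otimes L_-$. Extending $S$ and $R$ by $u$-linearity to $L\laurent{u}$, we have $SR=\id$, so $S\tau_{\geq 1}R = \id - S\tau_{\leq 0}R$. Since $\langle -,-\rangle_\res$ vanishes on $L_-\otimes L_-$ (the non-positive $u$-powers on both sides cannot sum to $1$), the formula for $H$ simplifies to
\[ H(\alpha,\beta) = \langle S\tau_{\leq 0}R\alpha,\beta\rangle_\res. \]
For cycles $\alpha,\beta \in \ker(b+uB) \cap L_-$, the elements $R\alpha$, $R\beta$ are $b$-cycles in $L\laurent{u}$ (since $R$ is a chain map from $(L\laurent{u},b+uB)$ to $(L\laurent{u},b)$), and so is $\tau_{\leq 0}R\alpha$ because $\tau_{\leq 0}$ commutes with $b$. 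Writing $\beta = SR\beta$ and applying the homological symplectic condition for $S$ to the pair of $b$-cycles $\tau_{\leq 0}R\alpha$ and $R\beta$, we get $H(\alpha,\beta) = \langle\tau_{\leq 0}R\alpha, R\beta\rangle_\res$; the analogous simplification gives $H(\beta,\alpha) = \langle\tau_{\leq 0}R\beta, R\alpha\rangle_\res$. Using the graded antisymmetry of the residue pairing and the decompositions $R\alpha = \tau_{\leq 0}R\alpha + \tau_{\geq 1}R\alpha$, $R\beta = \tau_{\leq 0}R\beta + \tau_{\geq 1}R\beta$ (in which pairings of two $\tau_{\leq 0}$ or two $\tau_{\geq 1}$ terms vanish for residue reasons), the antisymmetric combination $H(\alpha,\beta) - (-1)^{|\alpha||\beta|}H(\beta,\alpha)$ collapses to $\langle R\alpha,R\beta\rangle_\res$. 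One last application of homological symplecticness gives $\langle R\alpha,R\beta\rangle_\res = \langle SR\alpha, SR\beta\rangle_\res = \langle\alpha,\beta\rangle_\res = 0$.

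Finally, by Hodge--de Rham degeneration the complex $(L_-,b+uB)$ has homology $H_*(L)_-$, and Künneth over $\bbK$ gives $H_*(L_-\otimes L_-, b+uB) = H_*(L)_-\otimes H_*(L)_-$; working over a field, this identifies the homology of $\Hom(L_-\otimes L_-,\bbK)$ with respect to the induced differential as $\Hom(H_*(L)_-\otimes H_*(L)_-, \bbK)$. The previous two steps show that $H-H^{\sym}$ is a cocycle inducing the zero map on $H_*(L)_-\otimes H_*(L)_-$, hence it represents the zero homology class and $H-H^{\sym} = [b+uB,\delta]$ for some $\delta$; a parity check (with $H-H^{\sym}$ even and $[b+uB,-]$ odd) forces $\delta$ to be odd. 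The main obstacle lies in the sign-heavy cycle computation of the middle step, particularly in tracking how the graded antisymmetry of the residue pairing interacts with the $\tau_{\leq 0}/\tau_{\geq 1}$ decompositions.
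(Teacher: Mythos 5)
Your proof is correct, and while it shares the paper's overall skeleton (show $H-H^{\sym}$ is $(b+uB)$-closed, show it vanishes on homology, conclude exactness over a field), the way you establish the key middle step is genuinely different. The paper works with $D = H - H\sigma$ and proves $D$ vanishes on cycles by induction on the $u$-adic filtration $H_*(F_iL_-)\otimes H_*(F_jL_-)$: it peels off the leading term of a cycle as $\tau_{\leq 0}S(x_{-i}u^{-i})$ plus something of lower filtration, computes $D$ on these leading terms, and reduces to $\langle S(x_{-i}u^{-i}), S(y_{-j}u^{-j})\rangle_\res = 0$. You instead avoid the induction entirely: using $SR=\id$ and the vanishing of $\pairing_\res$ on $L_-\otimes L_-$ you rewrite $H(\alpha,\beta)=\langle S\tau_{\leq 0}R\alpha,\beta\rangle_\res$ globally, apply homological symplecticity to strip the $S$'s, and observe that the antisymmetrization collapses (via the $\tau_{\leq 0}/\tau_{\geq 1}$ decomposition and the residue-degree bookkeeping) to $\langle R\alpha, R\beta\rangle_\res = \langle\alpha,\beta\rangle_\res = 0$. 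Both arguments use the homologically symplectic condition in the same essential way at the bottom, but your version is a single closed-form computation rather than an induction, which is arguably cleaner; the paper's filtration argument has the mild advantage of only ever invoking the symplectic condition on elements of the special form $S(x_{-i}u^{-i})$, making the reduction to Definition~\ref{def:R-operator} completely explicit. One small point worth recording if you write this up: your identification of $H^*(\Hom(L_-^{\otimes 2},\bbK))$ with $\Hom(H_*(L_-)^{\otimes 2},\bbK)$ should be phrased for continuous homomorphisms in the $u$-adic topology, as elsewhere in the paper, but this causes no difficulty since the $E^1$-degeneration makes the relevant spectral sequences collapse.
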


\begin{proof}
Denote by $\sigma: L_- \otimes L_- \ra L_- \otimes L_-$ the
transposition operator $x\otimes y \mapsto (-1)^{|x||y|}y\otimes
x$. Since $H^\sym= \frac{1}{2} (H+H\sigma)$, it suffices to show that
$D= H-H\sigma$ is $(b+uB)$-exact. First we show it is $b+uB$-closed:
\[ [b+uB,D]= [b+uB,H] - [b+uB,H]\sigma= \Omega -\Omega\sigma =0.\]
Thus, $D$ descends to a map in homology
$H_*(L_-)\otimes H_*(L_-) \ra \bbK$.  We shall prove that this map is
zero. There is an increasing filtration of $L_-$ defined by
\[ F_k L_- = \{ x \in L_- \mid x \mbox{ is of the form\;\;} x_{-k}
  u^{-k} + \cdots + x_0 u^0.\}\]
It induces an exhaustive filtration in homology
\[ H_*(L_-) = \lim H_*(F_k L_-).\]
We shall prove that $D$ is zero in homology by induction on $k=i+j$
in the tensor product filtration $H_*(F_i L_-)\otimes H_*(F_j
L_-)$.

To begin, consider the case $x=x_0 \in F_0 L_-$ and
$y=y_0 \in F_0L_-$. The $(b+uB)$-closedness of $x$ and $y$ gives
$bx=by=0$. We have
\[ D(x,y)=\langle R_1x,y\rangle_{\sf Muk} - \langle
  x,R_1y\rangle_{\sf Muk}\]
Since $R_1=-S_1$, the above is indeed zero by the homologically
symplectic condition, see Definition~\ref{def:R-operator}.
  
For the general case let $x=x_{-i} u^{-i} + \cdots$ and
$y=y_{-j} u^{-j}+\cdots$ be two elements in $L_-$ that are
$(b+uB)$-closed. In particular, we have $bx_{-i}=by_{-j}=0$.  We
rewrite $x$ and $y$ as
\[ x =\tau_{\leq 0} S( x_{-i} u^{-i}) + x',\;\;\; y = \tau_{\leq 0} S(
  y_{-j} u^{-j}) + y', \]
with $x'\in F_{i-1} L_-$, $y'\in F_{j-1} L_-$ and both
$(b+uB)$-closed. By induction, it suffices to prove that
\[ D\big( \tau_{\leq 0} S( x_{-i} u^{-i}), \tau_{\leq 0} S( y_{-j}
  u^{-j}) \big)=0. \]
We first compute
\begin{align*}
H\big( \tau_{\leq 0} S( x_{-i} u^{-i}), & \tau_{\leq 0} S( y_{-j}
  u^{-j}) \big)  = H\big( S( x_{-i} u^{-i})- \tau_{\geq 1}S( x_{-i}
                  u^{-i}) , \tau_{\leq 0} S( y_{-j} u^{-j}) \big)\\
  & = - \langle S\tau_{\geq 1} R\big( S( x_{-i} u^{-i})- \tau_{\geq
    1}S( x_{-i} u^{-i})\big), \tau_{\leq 0} S( y_{-j}  u^{-j})\rangle_\res.
\end{align*}
Observe that we have
\begin{align*}
  S\tau_{\geq 1} R S( x_{-i} u^{-i}) & = S\tau_{\geq 1}
                                       (x_{-i} u^{-i})=0,\\
  S\tau_{\geq 1} R \tau_{\geq 1} S( x_{-i} u^{-i})
                                     &= \tau_{\geq 1} S( x_{-i} u^{-i}).
\end{align*}
The earlier expression simplifies to
\[ H\big( \tau_{\leq 0} S( x_{-i} u^{-i}), \tau_{\leq 0} S( y_{-j}
  u^{-j}) \big)= \langle \tau_{\geq 1} S( x_{-i} u^{-i}), \tau_{\leq
    0} S( y_{-j} u^{-j})\rangle_\res.\]
Similarly, for the term $H\sigma$ we have
\[ H\sigma \big( \tau_{\leq 0} S( x_{-i} u^{-i}), \tau_{\leq 0} S(
  y_{-j} u^{-j}) \big)=- \langle \tau_{\leq 0} S( x_{-i} u^{-i}),
  \tau_{\geq 1} S( y_{-j} u^{-j})\rangle_\res.\]
Taking the difference yields
\begin{align*}
  D\big( \tau_{\leq 0} & S( x_{-i} u^{-i}), \tau_{\leq 0} S( y_{-j}
  u^{-j}) \big)  = \\
  & = \langle \tau_{\geq 1} S( x_{-i} u^{-i}),
  \tau_{\leq 0} S( y_{-j} u^{-j})\rangle_\res + \langle \tau_{\leq 0}
   S(x_{-i} u^{-i}), \tau_{\geq 1} S( y_{-j} u^{-j})\rangle_\res \\
  & =  \langle S( x_{-i} u^{-i}), S( y_{-j} u^{-j})\rangle_\res=
    \langle x_{-i} u^{-i}, y_{-j} u^{-j} \rangle_\res = 0.
\end{align*}
In the third equality we have used the homologically symplectic condition
which is valid since both $x_{-i}$ and $y_{-j}$ both $b$-closed.
\end{proof}

\subsection{\texorpdfstring{Construction of the
    $L_\infty$ morphism $\cK$}{Construction of the L-infinity morphism
  K}}
\label{subsec:constr-K}

The idea for proving Theorem~\ref{thm:K} is as follows. First we
construct a pseudo-isotopic family of DGLAs which interpolates between
$\h_A^\triv$ and $\h_A$. By definition, this is a DGLA structure on
the tensor product $\h_A\otimes\Omega_{[0,1]}^\bullet$ which
specializes to $\h_A^\triv$ at $t=0$ and to $\h_A$ at $t=1$.  Then we
apply an integration procedure to obtain the desired $L_\infty$
isomorphism $\cK: \h_A \ra \h_A^\triv$.

Using the homotopy operator $H^\sym$ defined above we define a DGLA
structure on $\h_A\otimes\Omega_{[0,1]}^\bullet$ as follows. We take
the differential to be
\[ b+uB+\hbar t\Delta + d_{DR} + \hbar \Delta^{H^\sym} dt,\]
with $\Delta^{H^\sym}$ defined as in~(\ref{subsec:firsttriv}), and we
take the Lie bracket defined by
\[ t\cdot\{-,-\} + \{-,-\}^{H^\sym} dt,\]
with $\{-,-\}^{H^\sym}$ defined as the failure of $\Delta^{H^\sym}$
to be a first order differential operator on $\Sym\h_A$.  This is
similar to how $\{-,-\}$ was defined in~(\ref{subsec:dgla1}).

\begin{Lemma}
  With the differential and Lie bracket defined above
  $\h_A\otimes \Omega_{[0,1]}^\bullet$ is a DGLA.
\end{Lemma}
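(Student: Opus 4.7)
The key observation is that this construction packages $\h_A \otimes \Omega^\bullet_{[0,1]}$ as a pseudo-isotopy between the two dgBV structures on $\h_A$ (with BV operator $\hbar\Delta$) and on $\h_A^{\triv}$ (with trivial BV operator), linked via the second-order homotopy $\hbar\Delta^{H^\sym}$. Writing
\[ \mathcal{D} = (b+uB) + d_{DR} + \tilde\Delta, \qquad \tilde\Delta := \hbar t\Delta + \hbar \Delta^{H^\sym} dt, \]
the bracket $t\{-,-\} + \{-,-\}^{H^\sym} dt$ is, by construction, precisely the BV bracket measuring the failure of $\tilde\Delta$ to be a first-order derivation of the commutative product on $\Sym(L_-) \otimes \Omega^\bullet_{[0,1]}$. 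Thus the DGLA axioms will follow from Getzler's theorem (the same input used to build $\h_A$ in~(\ref{subsec:dgla1})) once I verify $\mathcal{D}^2 = 0$ and that $\mathcal{D}$ is a derivation of the product.

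First I would check $\mathcal{D}^2 = 0$ by splitting into the $dt^0$-part and the $dt^1$-part. The $dt^0$-part reads $(b+uB+\hbar t\Delta)^2$, which vanishes because $(b+uB)^2 = 0$, $\Delta^2 = 0$ and $[b+uB,\Delta] = 0$ (these are the defining identities of the dgBV structure on $\h_A$). The $dt^1$-part assembles the three cross contributions
\[ [d_{DR},\,\hbar t\Delta] + [b+uB,\,\hbar\Delta^{H^\sym}\,dt] + [\hbar t\Delta,\,\hbar\Delta^{H^\sym}\,dt], \]
together with $d_{DR}^2 = 0$ and $(\Delta^{H^\sym}dt)^2 = 0$ (since $dt\wedge dt = 0$). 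The first bracket gives $\hbar\Delta\,dt$; the second gives $-\hbar\Delta\,dt$ because Proposition~\ref{prop:homotopy} says $[b+uB, H^\sym] = \Omega$, which extends to the second-order derivation identity $[b+uB,\Delta^{H^\sym}] = \Delta$ (both sides are second-order operators and agree on $\Sym^{\leq 2} L_-$). These cancel, and the third bracket vanishes because $\Delta$ and $\Delta^{H^\sym}$ are both second-order operators vanishing on $\Sym^{\leq 1}L_-$, so their commutator vanishes on $\Sym^{\leq 2}L_-$ and extends by the BV derivation rule.

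Next, the product on $\h_A \otimes \Omega^\bullet_{[0,1]}$ is the graded-commutative product combining symmetric multiplication in $\Sym(L_-)$ with the wedge product on $\Omega^\bullet_{[0,1]}$. The operators $b+uB$ and $d_{DR}$ are first-order derivations, while $\tilde\Delta$ is second-order (of BV type) by construction, so $\mathcal{D}$ is a sum of a derivation and a BV operator; this is exactly the data to which Getzler's theorem applies. The bracket $\{-,-\}_t$ thereby inherits the Jacobi identity, and the Leibniz rule $\mathcal{D}\{x,y\}_t = \{\mathcal{D}x,y\}_t + (-1)^{|x|'}\{x,\mathcal{D}y\}_t$ follows from the standard argument: the derivation part $b+uB+d_{DR}$ is a derivation of the BV bracket (since it commutes with $\tilde\Delta$ up to the cancellation above), and the BV part $\tilde\Delta$ satisfies the universal BV-bracket identity with itself.

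The main obstacle will be bookkeeping the signs, particularly in the $dt$-component where the shifted DGLA conventions of~(\ref{subsec:signs}) interact with $dt\wedge dt = 0$ and with the extra $(-1)^{|x|}$ in the definition of $H^\sym$. If a uniform sign check is awkward, a safer route is to extract the $dt^0$- and $dt^1$-components of each axiom separately: the $dt^0$-component is then a $t$-polynomial whose coefficients reduce to the DGLA axioms of $\h_A$ (at the quadratic-in-$t$ level) or are trivially satisfied (at lower order), while the $dt^1$-component reduces in every case to the single homotopy identity $[b+uB, H^\sym] = \Omega$, extended as the derivation/second-order operator statement $[b+uB,\Delta^{H^\sym}] = \Delta$ and $[b+uB, \{-,-\}^{H^\sym}] = \{-,-\}$.
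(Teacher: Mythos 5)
Your proposal is correct and follows essentially the same route as the paper, whose entire proof is ``direct check using Proposition~\ref{prop:homotopy}'': you verify the dgBV identities for the total differential (the only nontrivial point being the cancellation in the $dt$-component via $[b+uB,\Delta^{H^\sym}]=\Delta$) and then invoke Getzler's theorem, exactly as in the construction of $\h_A$ itself. One small imprecision: the commutator $[\Delta,\Delta^{H^\sym}]$ is a priori a \emph{third}-order operator, so you should check it vanishes on $\Sym^{\leq 3}L_-$ rather than $\Sym^{\leq 2}L_-$ --- which it does, since each factor lowers symmetric degree by two.
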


\begin{proof} This is a direct check using
Proposition~\ref{prop:homotopy}.
\end{proof}

As described in~\cite{Fuk} one can integrate this family of DGLAs to
obtain explicit $L_\infty$ isomorphisms
\[ \cK(t): \h_A(t) \ra \h_A(0)=\h_A^\triv.\]
In particular, for $t=1$ we obtain the desired isomorphism
$\cK=\cK(1)$.  The formulas for $K$ in~\cite{Fuk} requires the
computation of integrals on certain configuration spaces. We shall not
go into the details of these calculations but instead, in the
remainder of this section, we shall write down the final result which
turns out to be quite neat.  The $L_\infty$ morphism $\cK$ is given by
a Feynman sum over a certain class of graphs, with the vertices decorated
by the tensors $\beta^A_{g,n}$ from~(\ref{subsec:defi}), and with the
edges decorated by the operator $H^\sym$ defined earlier in terms of
the splitting $s$!

Before we give the exact formula for $\cK$ we need to introduce some
notations for graphs, following~\cite[Section 2]{GetKap}. By a labeled
graph we shall mean a graph $G$ (possibly with leaves) endowed with a
genus labeling function $g: V_G\ra \bbZ_{\geq 0}$ on the set of its
vertices $V_G$. The genus of a labeled graph is defined to be
\[ g(G) = \sum_{v\in V_G} g(v)+{\sf rank\,} H_1(G). \]
We will use the following notations for a labeled graph $G$:
\begin{itemize}
\item[--] $V_G$ denotes the set of vertices of $G$;
\item[--] $L_G$ denotes the set of leaves of $G$;
\item[--] The valency of a vertex $v\in V_G$ is denoted by $n(v)$;
\item[--] The graph $G$ is called stable if $2g(v)-2+n(v)>0$ for 
every vertex $v$.
\item[--] If $G$ has $m$ vertices, a marking of $G$ is a bijection
  \[ f: \{1,\cdots,m\} \ra V_G. \] An isomorphism between two marked
and labeled graphs is an isomorphism of the underlying labeled graphs
that also preserves the markings.
\end{itemize}

We will denote various isomorphism classes of graphs as follows:
\begin{itemize}
\item[--] $\Gamma(g,n)$ is the set of isomorphism classes of
graphs of genus $g$ with $n$ leaves;
\item[--] Using double brackets as in $\Gamma\laurent{g,n}$ requires
further that the graphs be stable;
\item[--] A subscript $m$ as in $\Gamma(g,n)_m$ or
$\Gamma\laurent{g,n}_m$ indicates that the graphs in discussion have
$m$ vertices;
\item[--] adding a tilde as in $\widetilde{\Gamma(g,n)}_m$ or
$\widetilde{\Gamma\laurent{g,n}}_m$ means that we consider
{\em marked} graphs.
\end{itemize}

We are now ready to write down the formula for the desired $L_\infty$
morphism $\cK: \h_A \ra \h_A^\triv$. Recall that an
$L_\infty$ morphism is a collection of even linear maps
\[ \cK_m: \Sym^m (\h_A[1]) \ra \h_A^\triv[1] \]
defined for each $m\geq 1$. Since $\h_A =
\Sym(L_-)[1]\series{\hbar,\lambda}$ already includes a shift 
by one, the parity of the elements in $\h_A[1]$ is the same as in
$\Sym L_-$.

The $m$-th component $\cK_m$ of $\cK$ will then be the
$\lambda$-linear extension of the map
\[ \cK_m = \sum_{g,n} \sum_{ (G,f)\in \widetilde{\Gamma(g,n)}_m}
  \frac{1}{|\Aut(G,f)|}\cdot \cK_{(G,f)}, \]
with the map $\cK_{(G,f)}$ described below.

Let $(G,f)\in \widetilde{\Gamma(g,n)}_m$ be a marked graph.  Set
$g_i = g(f(i))$, and $n_i = n(f(i))$.  We proceed to describe the
construction of the map
\[ \cK_{(G,f)}:\bigotimes_{i=1}^m \left (\Sym^{n_i} L_-\cdot
    \hbar^{g_i} \right) \ra \Sym^n L_-\cdot \hbar^g.\]
For elements $\gamma_i \in \Sym^{n_i}L_-, \; (i=1,\ldots,m)$ the
expression
$\cK_{(G,f)} (\gamma_1\hbar^{g_1},\ldots, \gamma_m\hbar^{g_m})$ is
computed by the following Feynman-type procedure:
\begin{enumerate}
\item For an element $\gamma = x_1x_2\cdots x_j\in \Sym^j L_-$ let
$\widetilde{\gamma} \in \left (L_-\right )^{\otimes j}$ be its
desymme\-tri\-za\-tion,
\[ \widetilde{\gamma} =\sum_{\sigma\in \Sigma_j} \epsilon \cdot
x_{\sigma(1)}\otimes\cdots\otimes x_{\sigma(j)} \in L_-^{\otimes j}.\]
Here $\epsilon$ is the Koszul sign for permuting the elements
$x_1,\ldots, x_i$ (with respect to the degree in $L_-$). Then 
decorate the half-edges adjacent to each vertex $v_i$ by
$\widetilde{\gamma}_i$. Tensoring together the results over all the
vertices yields a tensor of the form
\[ \bigotimes_{i=1}^m \widetilde{\gamma_{i}}\in L_-^{\otimes (\sum
    n_i)}.  \]
The order in which these elements are tensored is the one
given by the marking.

\item For each internal edge $e$ of $G$ contract the corresponding
components of the above tensor using the even symmetric bilinear form
  \[ H^{\sym}: L_-^{\otimes 2} \ra \bbK. \] When applying the
contraction we always permute the tensors to bring the two terms
corresponding to the two half-edges to the front, and then apply the
contraction map. The ordering of the set $E_G$ does not matter since
the operator $H^{\sym}$ is even; also the ordering of the two
half-edges of each edge does not matter since the operator $H^{\sym}$
is (graded) symmetric.
\item Read off the remaining tensor components (corresponding to the
leaves of the graph) in any order, and regard the result as the
element $\gamma$ in $\Sym^n L_-$ via the canonical projection map
$L_-^{\otimes n} \ra \Sym^n L_-$.
\end{enumerate}

To illustrate the construction, we write down the map $\cK_1$. The
following figures illustrate a few graphs that contribute in this
case. Their common feature is that they have only one vertex in which
case the marking $f$ is unique.
\begin{center} \includegraphics[scale=.5]{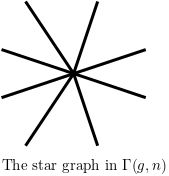}\hspace{1cm}
\includegraphics[scale=.5]{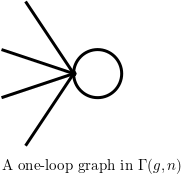}\hspace{1cm}
\includegraphics[scale=.5]{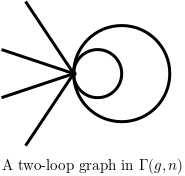}
\end{center} The contribution of the first type of graphs (the star
graphs) is the identity map
\[ \h[1]= \Sym L_-\series{\hbar, \lambda} \ra \h^\triv = \Sym
L_-\series{\hbar, \lambda}.  \] To see this, note that
de-symmetrization followed by projection produces a factor of $n!$,
which is canceled by the size of the automorphism group of the star
graph.  We also note that it is necessary to use the set $\Gamma(g,n)$
of all labeled graphs, as opposed to using just stable graphs. For
instance, the $(g,n)=(0,0)$ star graph (just a vertex with a genus
label of $0$) contributes to the identity map $\Sym^0 L_- =\bbK \ra
\Sym^0 L_- =\bbK$.

In a similar way one checks that the middle type of graph acts
precisely by the operator $\hbar\Delta^{H^{\sym}}$, while the third
type of graph contributes
$\frac{1}{2}(\hbar\Delta^{H^{\sym}})^2$. Here the extra factor $1/2$
comes from the automorphism that exchanges the two loops. In general
we have
\[ \cK_1= \sum_{l\geq 0} \frac{1}{l!} (\hbar\Delta^{H^{\sym}})^l=
e^{\hbar\Delta^{H^{\sym}}}.\] Indeed, the first $L_\infty$ morphism
identity is $(b+uB)\cK_1=\cK_1(b+uB+\hbar \Delta)$. It can be verified
directly using the formula above of $\cK_1$ and
Proposition~\ref{prop:homotopy}.

\subsection{\texorpdfstring{Proof of the fact that $\cK$ is an
    $L_\infty$ morphism}{Proof of the fact that K is an L-infinity morphism}}
The remainder of this section is devoted to proving
Theorem~\ref{thm:K}.  Replacing the contraction operator $H^{\sym}$ by
$t H^{\sym}$ in the construction of $\cK$ yields a family of
pre-$L_\infty$ morphisms
\[ \cK(t): \h_A(t) \ra \h_A(0)=\h_A^\triv.\]
On the other hand, the integration procedure described in~\cite{Fuk}
produces a family of $L_\infty$ morphisms denoted by
\[ \cK'(t): \h_A(t) \ra \h_A(0)=\h_A^\triv.\]
It will suffice to prove $\cK(t)=\cK'(t)$.  To accomplish
this it is most convenient to work with the bar construction of
DGLAs. Indeed, recall that we have defined the operator
\[ \Delta^{H^\sym}: \h_A[1] \ra \h_A[1]\]
which is a first order trivialization of $\Delta$,
i.e. $[b+uB,\Delta^{H^\sym}]=\Delta$.  Taking the commutator with the 
product of the algebra $\h_A[1]=\Sym L_-\series{\hbar,\lambda]}$ yields the
binary operation 
\[\{x,y\}^{H^{\sym}} := \Delta^{H^\sym}(x\cdot y) - \Delta^{H^\sym}(x)
  \cdot y - x \cdot \Delta^{H^\sym}(y).\]
Using the bar construction of DGLAs we extend the even maps
$\Delta^{H^\sym}$ and $\{-,-\}^{H^{\sym}}$ to coderivations
\[  \nu_1, \nu_2 : \Sym (\h_A [1])\ra \Sym (\h_A [1]). \]
Then, by the construction in~\cite[Section 9]{Fuk}, the family
$\cK'(t)$ is characterized by the differential equation
\[ \frac{d \cK'_m(t)}{dt} =
  \hbar \cK'(t)_m \nu_1 + \cK'_{m-1}(t) \nu_2, \]
with the initial condition that $\cK'(0)=\id$.

We proceed to prove that our family $\cK(t)$ also satisfies
the above differential equation and initial condition, which will show
that the two morphisms are equal.  Recall that by definition we have
\[\cK_m(t) = \sum_{g,n} \sum_{ (G,f)\in
\widetilde{\Gamma(g,n)}_m} \frac{1}{|\Aut(G,f)|}\cdot
\cK_{(G,f)}(t)\cdot \hbar^{g-\sum_{i=1}^m g(f(i))}. \]
The dependence of $\cK^S_{(G,f)}(t)$ on $t$ is simple: we
assign $t H^{\sym}$ to all edges of $G$. Thus, we observe that when
the differentiation acts on loop edges it yields the term
$\hbar \cK'(t)_m \nu_1$, while differentiating at non-loop
edges yields the other term $\cK'_{m-1}(t) \nu_2$.

In both cases the relevant coefficients involving automorphism group
of graphs are standard in the theory of graph sums. For example we
can use the techniques of integration over groupoids developed
in~\cite{Fio} to argue their validity as follows. Let us denote by
$\cD_m$ the groupoid whose objects are marked labeled graphs $(G,f)$
with $m$ vertices, and morphisms are graph isomorphisms preserving
marking and labelings.  Let $\cD'_m$ denote the groupoid whose objects
are triples $(G,f,e)$ with $e$ a non-loop edge of $G$. We have the
following maps of groupoids
\[ \cD_m \stackrel{{\sf F}}{\longleftarrow} \cD'_m
  \stackrel{{\sf C}}{\longrightarrow} \cD_{m-1}.\]
The functor ${\sf F}$ is by forgetting the edge $e$, and the functor
${\sf C}$ is
by contracting the edge $e$. The new vertex obtained from contracting
$e$ in the contracted graph $G/e$ is marked by $1$, while the
remaining vertices of $G/e$ are marked according to its ordering
defined by $f$. As in~\cite{Fio} we may consider the assignment
\[ (G,f)\mapsto \cK_{(G,f)}(t)\cdot \hbar^{g-\sum_{i=1}^m
    g(f(i))} \]
as a vector-valued function $\cK_m(t)$ on the groupoid
$\cD_m$, and the coefficient $\frac{1}{|\Aut(G,f)|}$ as a measure
$d\mu$ on the set of isomorphism classes of objects. In this way we
can rewrite the summation as
\[\sum_{g,n}\sum_{ (G,f)\in \widetilde{\Gamma(g,n)}_m}
\frac{1}{|\Aut(G,f)|}\cdot \cK_{(G,f)}(t)\cdot
\hbar^{g-\sum_{i=1}^m g(f(i))} = \int_{\cD_m} \cK_m(t) d\mu\]
The $t$-dependence is a $t$ factor for every edge of $G$, which
implies that we simply have a factor $t^{|E_G|}$, and therefore we get
\[   \frac{d}{dt}\int_{\cD_m}\cK_m(t) d\mu = \int_{\cD_m}
  |E_G|t^{|E_G|-1}\cK_m d\mu. \]
Using the basic properties of groupoid integrals we
can also simplify the right hand side of the differential equation
satisfied by $\cK_m'(t)$ as
\[ 
  \int_{\cD_{m-1}} \cK_{m-1}(t)\nu_2 d\mu
  =\int_{\cD'_m} {\sf C}^*\big( \cK_{m-1}(t)\nu_2 \big) d\mu
  = \int_{\cD_{m}} {\sf F}_*{\sf C}^*\big( \cK_{m-1}(t)\nu_2 \big) d\mu,
\]
where the first and second equalities follow from~\cite[Equation
6.1]{Fio} and~\cite[Proposition 3.2]{Fio}, respectively.

The vector-valued function
${\sf F}_*{\sf C}^*\big( \cK_{m-1}(t)\nu_2 \big)$ is computed as
\begin{align*}
  {\sf F}_*{\sf C}^*\big( \cK_{m-1}(t) & \nu_2 \big)(G,f)  = \sum_{e\in
        E_G^{{\sf non-loop}}} C^*\big(\cK_{m-1}(t)\nu_2 \big)(G,f,e)  \\
        & = \sum_{e\in E_G^{{\sf  non-loop}}} t^{|E_G|- 1} \cK_m(G,f)
      = |E_G^{{\sf non-loop}}|\cdot t^{|E_G|-1} \cK_m(G,f).
\end{align*}
The case when $e$ is a loop edge is similar, and it contributes in the
integrand by
\[ |E_G^{{\sf loop}}|\cdot t^{|E_G|-1}\cK_m(G,f). \]
Putting these calculations together we see that the integrand gives
$|E_G|\cdot t^{|E_G|-1} \cK_m(G,f)$, which matches exactly
with the differential equation satisfied by $\cK_m'(t)$. This finishes
the proof of Theorem~\ref{thm:K}.

\section{\texorpdfstring{The Koszul trivializing morphism $\hcK$}{The
    Koszul trivializing morphism K}}
\label{sec:trivial-2}
In~(\ref{subsec:dgla-2}) we defined a Koszul-type resolution $\hh_A$
of $\h_A^+$, given by
\[ \hh_A= \bigoplus_{k\geq 1, l\geq 0} \Hom^{\sf c} \left
    (\Sym^k ( L_+[1]), \Sym^l(L_-)\right)\series{\hbar, \lambda}.\]
In order to simplify notation we shall write $L_{k,l}$ for the space
\[ L_{k,l} = \Hom^{\sf c} \left (\Sym^k(L_+[1]), \Sym^l(L_-)
  \right). \]
Consider the trivialization $\hh_A^\triv$ of $\hh_A$, which is the
DGLA whose underlying $\Z/2\Z$-graded vector space is the same as that
of $\hh_A$, but with differential $b+uB+\iota$ and with zero Lie
bracket.  The purpose this section is to construct an $L_\infty$
quasi-isomorphism
\[ \hcK: \hh_A \ra \hh_A^{\triv}\]
from the data of a chain-level splitting $S: L \ra L_+$ of the Hodge
filtration on the cyclic homology of $A$.

An important technical difference from the previous section is that we
will need to assume that $S$ is symplectic, or at least homologically
symplectic (see Definition~\ref{def:R-operator}).  The construction of
$\hcK$ will be similar to the one in the previous section in the sense
that the component maps $ \hcK_m \; (m\geq 1)$ will be written as
summations over certain graphs, the partially directed graphs that
we introduce below. 

\subsection{More first order trivializations}
\label{subsec:moretriv}
We need to construct two more bounding homotopies, in a way similar to
the construction of the homotopy $H$ in
Proposition~\ref{prop:homotopy}, but for the operators $\Theta$ and
$\delta$.

Recall that in~(\ref{subsec:dgla-2}) we introduced the even operator
$\Theta: L_- \ra L_+[1]$ given by
\[ \Theta(\alpha) = B(\alpha_0)\]
for $\alpha = \alpha_0 + \alpha_{-1}u^{-1} + \cdots$.  Let
$S: L \ra L_+$ be a chain-level splitting.  Define the odd map
$F: L_- \ra L_+[1]$ by
\[ F(\beta) = -u^{-1}\cdot S\, \tau_{\geq 1}\, R (\beta).\]
(This is well defined because of the $\tau_{\geq 1}$ operator in the middle.)
Explicitly, for $i\geq 0$, $j\geq 0$, the component $F_{i,j}$ of $F$ that
maps $L\cdot u^{-i}$ to $L \cdot u^{j}$ is given by
  \[ F_{i,j}(x\cdot u^{-i}) = -\sum_{l=0}^j S_lR_{i+j+1-l}\, x \cdot
u^{j}.\]

\begin{Proposition}
\label{prop:homotopy2}
The map $F: L_-\ra L_+[1]$ is a bounding homotopy of $\Theta$:
\[[\eth, F]=-(b+uB) F + F(b+uB)= \Theta.\]
(Here the differential of $L_+[1]$ is $-(b+uB)$ due to the shift.)

Moreover, the map $F$ is compatible with the homotopy operator $H$
constructed in Proposition~\ref{prop:homotopy}, in the sense that for
$\alpha,\beta\in L_-$ we have
\[ \iota(\alpha)(F\beta) = H(\beta,\alpha), \]
where $\iota: L_- \ra \Hom^{\sf c} (L_+[1] ,\bbK)$ is given by
\[ \iota(\alpha)(\beta)=(-1)^{|\alpha|'} \langle \alpha, u\cdot \beta
  \rangle_{\sf res} \]
as in~(\ref{subsec:dgla-2}).
\end{Proposition}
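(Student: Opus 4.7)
The proposition has two essentially independent parts, both of which amount to direct computations, and I would approach them in that order.

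For the homotopy identity $[\eth, F] = \Theta$, I would extend $F$ to the operator $\widetilde{F} = -u^{-1} S \tau_{\geq 1} R$ defined by the same formula on all of $L^\Tate$. Using the chain-map relations $(b+uB)S = Sb$ and $bR = R(b+uB)$ (which are exactly the identities $[b,S_n] = -BS_{n-1}$ and $[b,R_n] = R_{n-1}B$ packaged as power series in $u$), together with the evident commutations of $b$ with $u^{\pm 1}$ and with $\tau_{\geq 1}$, one checks immediately that $(b+uB)\widetilde{F} = \widetilde{F}(b+uB)$ as operators on $L^\Tate$. The point is then that for $\beta = \beta_0 + \beta_{-1} u^{-1} + \cdots \in L_-$ the ambient Laurent expression $(b+uB)\beta \in L^\Tate$ differs from $\eth_{L_-}\beta$ by exactly the single positive term $u B \beta_0 \in u L_+$, i.e.\ the piece discarded upon projection to $L_-$. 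The remaining one-line observation is that $\widetilde{F}$ restricted to $u L_+$ equals $-u^{-1}$: since $R$ preserves $u L_+$ we have $\tau_{\geq 1} R(u y) = R(u y)$, and then $SR = \id$ yields $S \tau_{\geq 1} R(u y) = u y$, so $\widetilde{F}(u y) = -y$. With $y = B\beta_0$ this says $\widetilde{F}(u B \beta_0) = -\Theta(\beta)$, and combining gives
\[ (b+uB) F(\beta) = \widetilde{F}\bigl(\eth_{L_-}\beta + u B \beta_0\bigr) = F(\eth_{L_-}\beta) - \Theta(\beta), \]
which rearranges to the desired $[\eth, F](\beta) = \Theta(\beta)$.

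For the compatibility identity $\iota(\alpha)(F\beta) = H(\beta, \alpha)$, I would simply unfold both sides. By the definition of $\iota$ on $L_{0,1} = L_-$ in terms of the contraction operator $C_{F\beta}$ from Section~\ref{subsec:dgla-2}, one has $\iota(\alpha)(F\beta) = \langle u F\beta,\, \alpha \rangle_\res$. Substituting $u F\beta = -S\tau_{\geq 1} R\beta$ directly produces $-\langle S\tau_{\geq 1} R\beta,\, \alpha\rangle_\res$, which is precisely the defining expression for $H(\beta,\alpha)$ in Section~\ref{subsec:firsttriv}. So the identity holds essentially tautologically once the definitions are written out.

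The main obstacle is not in any of the above conceptual steps but in the sign bookkeeping: one must reconcile the prefactor $(-1)^{|\alpha|'}$ appearing in the proposition's restatement of $\iota$ with the original $C_\beta$-convention and account consistently for the degree shift in $L_+[1]$ when applying the residue pairing to $u F\beta$. Both reconciliations use only the graded antisymmetry $\langle x, y\rangle_\res = -(-1)^{|x||y|}\langle y, x\rangle_\res$ and the convention $|x|' = |x|+1$, but a careful pass through the signs is needed to confirm the stated identities on the nose rather than just up to sign.
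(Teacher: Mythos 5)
Your proposal is correct and takes the same route as the paper, whose entire proof of this proposition is the single sentence ``The proof is by direct computation'': your extension of $F$ to $\widetilde{F}=-u^{-1}S\tau_{\geq 1}R$ on all of $L^\Tate$, the identity $(b+uB)\widetilde{F}=\widetilde{F}(b+uB)$ there, and the observation that $\widetilde{F}(uB\beta_0)=-\Theta(\beta)$ accounts exactly for the discrepancy between the ambient and quotient differentials on $L_-$, is a clean and complete way to carry that computation out for the first identity, and the second identity is indeed a definitional unfolding. The only remaining work is the sign reconciliation you explicitly flag (matching the $(-1)^{|\alpha|'}$ convention for $\iota$ against the graded antisymmetry of $\pairing_\res$ and the shift on $L_+[1]$), which is precisely the mechanical content the paper also omits.
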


\begin{proof} The proof is by direct computation.
\end{proof}

We use the homotopy operator $F$ to define an even operator
\[ \{-,-\}_\hbar^F : \Sym^2 (\hh_A[1]) \ra \hh_A[1]\]
using a definition similar to that of $\{-,-\}_\hbar$
in~(\ref{subsec:dgla-2}).  More precisely, in the definition of the
twisted composition map $\circ_r$, instead of using $r$ copies of
$\Theta$, we replace one $\Theta$ with an $F$.  This yields
\begin{align*}
(\Psi \circ^F_r \Phi) ( \beta_1\cdots \beta_{k_1+k_2-r})
& = \sum_{P,Q}\sum_{I,J} \epsilon_{I,J}\epsilon_{P,Q} (-1)^\star
\Phi(\beta_I)_P\cdot \Psi\big( F^{[r]} ( \Phi(\beta_I)_Q)\otimes
                                                         \beta_J\big)\\
F^{[r]} &= \sum_{j=1}^r \Theta^{j-1} \otimes F \otimes\Theta^{r-j}
\end{align*}
Here $\star= |\Psi||\Phi(\beta_I)_P|+ |\Psi|+|\Phi(\beta_I)_P|$. Its
sign diagram is depicted as
\[ \includegraphics[scale=.5]{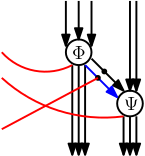}.\]
We use the convention that the blue directed edge represents the
homotopy operator $F$.  Since $F: L_- \ra L_+[1]$ is odd there is an
extra red line associated to it. This explains the sign difference
between $\circ^F_r$ and $\circ_r$.  Symmetrizing $\circ^F_r$ and
summing over $r$ as before we obtain the desired map
\[\{ \Psi, \Phi\}_\hbar^F = \sum_{r\geq 1}\{ \Psi,\Phi
    \}_r \cdot \hbar^{r-1}
  \]
where 
\[  \{ \Psi,\Phi \}^F_r = (-1)^{|\Psi|}
\big(\Psi\circ^F_r \Phi - (-1)^{|\Psi||\Phi|} \Phi \circ^F_r
\Psi\big). \]

If the splitting $S$ is only homologically symplectic and not
symplectic, we shall need to define a map
\[ \{-,-\}_\hbar^\delta : \Sym^2 (\hh_A[1]) \ra \hh_A[1]\]
that will fix the fact that $H$ is only homotopy-symmetric.  The
definition of the map $\{-,-\}_\hbar^\delta$ is exactly the same as
that of $\{-,-\}_\hbar^F$, but replacing in the above construction $F$
with the operator $\delta$ of Lemma~\ref{lem:delta}. 

\subsection{Partially directed graphs}
\label{subsec:pdgraphs}
Before we write down the formula for the quasi-isomorphism $\hcK$ we
need to introduce a new type of graphs which we will call partially
directed graphs.  The components of $\hcK$ will be written as sums
over these graphs.

\begin{Definition}
\label{defi:dir-graph}
A partially directed graph of type $(g,k,l)$ is given by a quadruple
$\GG = (G,L_G^{\sf in}\coprod L_G^{\sf out},E^\direct,T)$ consisting of the
following data:
\begin{itemize}
\item[--] A labeled graph $G$ of type $(g,k+l)$.
\item[--] A decomposition
\[ L_G = L_G^{\sf in}\coprod L_G^{\sf out}\]
of the set of leaves
$L_G$ such that $|L_G^{\sf in}|=k$ and $|L_G^{\sf out}|=l$. Leaves in
$L_G^{\sf in}$ will be called incoming, while leaves in $L_G^{\sf
  out}$ will be called outgoing.
\item[--] A subset $E^{\direct}\subset E_G$ of edges of $G$ whose
elements are called directed edges, and a direction is chosen on
them. Edges in $E_G-E^{\direct}$ are called undirected.
\item[--] A spanning tree $T\subset E_G$ of the graph $G$.
\end{itemize}

We require the following properties to hold:
\begin{itemize}
\item[--] There is no directed loop.
\item[--] Each vertex has at least one incoming half-edge.
\end{itemize}
  
A partially directed graph is called stable if the underlying
labeled graph is.
\end{Definition}

An isomorphism $\phi: \GG_1 \ra \GG_2$ of partially directed graphs
consists of an isomorphism of underlying labeled graphs $G_1$ and
$G_2$ which preserves the directions of leaves and edges, and such that
$\phi(T_1) = T_2$.  Denote by $\Gamma(g,k,l)$ the set of isomorphism
classes of partially directed graphs of type $(g,k,l)$ and by
$\Gamma\laurent{g,k,l}$ the subset of those that are stable.  A marking
of a partially directed graph $\GG$ is a bijection
$f: \{1,\cdots,m\} \ra V_G$.  Denote by $\widetilde{\Gamma(g,k,l)}_m$
the set of isomorphism classes of marked partially directed graphs
with $m$ vertices, and by $\widetilde{\Gamma\laurent{g,k,l}}_m$ the
corresponding set of marked stable partially directed graphs.

\subsection{The weight of a partially directed graph}
\label{subsec:weight}
We need to associate a rational number $\wt(\GG)\in \bbQ$ to a
partially directed graph $\GG$.  This will be used in the explicit formula for
$\hcK:\hh_A\ra\hh_A^\triv$.

Let $e\in T$ be an edge from the spanning tree $T$. We call it
contractible if the graph obtained from $\GG$ by contracting all the
directed edges connecting the two end points of $e$ is again a valid
partially directed graph. Denote the resulting partially directed
graph by $\GG/e$. Denote by $T^{\sf contr}\subset T$ the set of
contractible edges, and by $E_G^{\sf non-loop}$ the set of non-loop
edges of $G$.

We inductively define $\wt(\GG)$ as follows:
\begin{itemize}
\item If $|T|=\emptyset$ (equivalently, if $\GG$ only has one vertex)
  then $\wt(\GG)=1$. 
\item In general we set
\[ \wt(\GG)= \frac{1}{|E_G^{\sf non-loop}|}\sum_{e\in T^{\sf contr}} \wt(\GG/e).\]
\end{itemize}

An immediate corollary of this definition is that $\wt(\GG)=1$ if $g(\GG)=0$.
Indeed, in the genus zero case, since all edges in $T$ are
contractible and $T=E_G^{\sf non-loop}$, it is easy to prove that
$\wt(\GG)=1$ by induction. 

\subsection{\texorpdfstring{The
    $L_\infty$ quasi-isomorphism $\hcK$}{The L-infinity
    quasi-isomorphism hat-K}}  
\label{subsec:k-hat-symplectic}

As in~(\ref{subsec:constr-K}) we shall construct the $L_\infty$
quasi-isomorphism 
\[ \hcK: \hh_A\ra\hh_A^\triv \]
by integrating a DGLA structure on the tensor product
$\hh_A\otimes \Omega_{[0,1]}^\bullet$.  On this tensor product the
differential and Lie bracket are given by
\[  b+uB+\hbar t \Delta + \iota + d_{DR} +  \hbar \Delta^{H}dt \]
and
\[ \sum_{r\geq 1} t^r\{-,-\}_r\hbar^{r-1} + \sum_{r\geq 1}
t^{r-1}\{-,-\}^F_r\hbar^{r-1}dt+\sum_{r\geq 1}
                    t^{r-1}\{-,-\}^\delta_r\hbar^{r-1}dt, \]
respectively.

The result of this integration procedure (following Fukaya~\cite{Fuk})
gives an $L_\infty$ morphism $\hh_A \ra \hh_A^\triv$, written in terms
of a graph sum similar to the one we used to define the morphism $\cK$
in~(\ref{subsec:constr-K}).  However, the weights used in Fukaya's
formula are not directly computable.

Our approach is slightly different.  We will define a pre-$L_\infty$
morphism $\hcK: \hh_A \ra \hh_A^\triv$ by a sum over graphs with
explicitly computable weights, and we show that it agrees with the
morphism defined by Fukaya.  From this it will follow that the $\hcK$
we have defined is indeed an $L_\infty$ morphism.

For each $m\geq 1$ we need to construct a map
\[ \hcK_m: \Sym^m (\hh_A[1]) \ra
  \hh_A^\triv[1].\]
As before, the map $ \hcK_m$ is defined as a sum over graphs.  This
time, however, the sum is over partially directed graphs.  Let
$(\GG,f)\in 
\widetilde{\Gamma(g,k,l)}_m$ be a marked partially directed
graph with $m$ vertices.  We proceed to define a $\bbK$-linear map
\[ \hcK_{(\GG,f)} : \bigotimes_{i=1}^m L_{k_i,l_i}[1] \cdot
  \hbar^{g_i} \ra L_{k,l}[1] \cdot \hbar^g.\]
This map is only non-zero when the combinatorial types match at each
vertex, i.e., for each $1\leq i \leq m$, we have $k_i= k_{f(i)}$,
$l_i = l_{f(i)}$, and $g_i=g_{f(i)}$. When this is the case, the
result
$ \hcK_{(\GG,f)}(\gamma_1\cdot
\hbar^{g_1},\ldots,\gamma_m\cdot\hbar^{g_m})$ is defined as follows:
\begin{itemize}
\item The tensor $\widetilde{\gamma_i}$ is assigned to the vertex $f(i)$. Here,
for a map $\gamma_i\in L_{k_i,l_i}$, its de-symmetrization
\[ \widetilde{\gamma_i} \in \Hom^{\sf c} \left (
    (L_+[1])^{\otimes k_i}, L_-^{\otimes l_i}\right ) \]
is the composition
\[ (L_+[1])^{\otimes k_i} \ra \Sym^{k_i}(L_+[1])
\stackrel{\gamma}{\lra} \Sym^{l_i}(L_-) \ra L_-^{\otimes l_i}. \]
\item The homotopy operator $F$ in
  Proposition~\ref{prop:homotopy2} is assigned to each directed edge in the
  spanning tree $T$; the operator $\Theta$
  in~(\ref{subsec:dgla-2}) is associated to the other directed edges.

\item The operator $\delta$ of Lemma~\ref{lem:delta} labels each
  undirected edge in the spanning tree $T$; the homotopy
  operator $H^{\sym}$ in Proposition~\ref{prop:homotopy} is assigned
  to the other undirected edges.
  
\item The result $\hcK_{(\GG,f)}(\gamma_1\cdot
  \hbar^{g_1},\ldots,\gamma_m\cdot\hbar^{g_m})$ is obtained by
  composition along $\GG$, with signs given by sign diagrams using
  formulas like in the definition of $\{-,-\}_\hbar^F$.
\end{itemize}

Assembling all these maps together we obtain a graph sum
 \[ \hcK_m (\gamma_1\cdot \hbar^{g_1},\ldots,\gamma_m\cdot
\hbar^{g_m})= \sum_{(\GG,f)} \frac{\wt(\GG)}{\Aut(\GG,f)}
\hcK_{(\GG,f)} (\gamma_1\cdot
\hbar^{g_1},\ldots,\gamma_m\cdot\hbar^{g_m}).\]
The summation above is over all marked partially directed graphs with
$m$ vertices, for various $(g,k,l)$.

\begin{Theorem}
\label{thm:Khat-delta}
Let $S$ be a homologically symplectic chain-level splitting. Then the
map $\hcK$ defined in terms of $S$ is a quasi-isomorphism of DGLAs
$\hcK:\hh_A \ra \hh_A^\triv$.  Moreover, we
have a homotopy commutative diagram of DGLAs
\[\begin{CD}
    \h^+_A @>\epsilon >> \hh_A \\
    @V \cK VV @VV \hcK V\\
    \h_A^{+,\triv} @>\epsilon>> \hh_A^\triv.
\end{CD}\]
\end{Theorem}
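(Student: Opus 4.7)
The plan is to mimic Section~\ref{sec:trivial-1}. I would first produce a pseudo-isotopy of DGLAs on $\hh_A\otimes\Omega_{[0,1]}^\bullet$ interpolating between $\hh_A^\triv$ at $t=0$ and $\hh_A$ at $t=1$, then invoke Fukaya's integration procedure to obtain a canonical $L_\infty$ morphism $\hcK'(t):\hh_A(t)\to\hh_A(0)=\hh_A^\triv$, and finally identify $\hcK'(1)$ with the explicit graph sum $\hcK$ of~(\ref{subsec:k-hat-symplectic}). The differential of the family is $b+uB+\hbar t\Delta+\iota+d_{DR}+\hbar\Delta^H\,dt$ and the bracket is the one displayed just before the theorem. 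The only non-routine verification of the DGLA axioms is compatibility with $dt$, and this reduces precisely to the three identities $[\eth,\Delta^H]=\Delta$ (Proposition~\ref{prop:homotopy}), $[\eth,F]=\Theta$ (Proposition~\ref{prop:homotopy2}), and $[\eth,\delta]=H-H^\sym$ (Lemma~\ref{lem:delta}); the last identity is exactly what the $\delta$-summand in the bracket is designed to compensate for in the homologically symplectic (non-strict) case.

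Next, Fukaya's construction produces a morphism $\hcK'(t)$ characterized by the ODE $\frac{d\hcK'_m(t)}{dt}=\hbar\hcK'_m(t)\nu_1+\hcK'_{m-1}(t)\nu_2$ with $\hcK'(0)=\id$, where $\nu_1,\nu_2$ are the coderivations on $\Sym(\hh_A[1])$ induced by the unary and binary $dt$-components of the family. I would then show that the graph sum $\hcK(t)$ obtained by scaling every decoration by the appropriate power of $t$ satisfies the same ODE. The main combinatorial obstacle is matching the weights: differentiating $t^{|E_G|}$ produces a factor $|E_G|$, and one must reorganize the resulting sum over marked PD graphs with a distinguished edge into the two terms on the right-hand side of the ODE via the forget-contract correspondence $\cD_m\xleftarrow{\sf F}\cD'_m\xrightarrow{\sf C}\cD_{m-1}$, exactly as in~(\ref{subsec:constr-K}). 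Here the recursive definition $\wt(\GG)=\frac{1}{|E^{\sf non-loop}_G|}\sum_{e\in T^{\sf contr}}\wt(\GG/e)$ is dictated precisely by the requirement that ${\sf F}_*{\sf C}^*$ produce the correct weight on $\cD_m$; in particular the spanning tree data is the bookkeeping device that tells us, at each induction step, which edge may be contracted so that the resulting PD graph is again valid (no directed loops created, every vertex retains an incoming half-edge). This identity, proved by double induction on $m$ and on $|E^{\sf non-loop}_G|$, is the heart of the theorem.

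To verify that $\hcK$ is a quasi-isomorphism, I would use the $u$-adic filtration on both DGLAs. On the first page of the associated spectral sequence the Hodge--de Rham degeneration assumption $(\dagger)$ kills the operators $uB$, $F$, $H^\sym$, and $\delta$ up to homotopy, so the induced $\hcK_1$ on $E^1$-pages is the identity. Standard comparison of convergent spectral sequences then upgrades this to a quasi-isomorphism.

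For the homotopy commutative square, both $\hcK\circ\biota$ and $\biota\circ\cK$ are $L_\infty$ morphisms $\h_A^+\to\hh_A^\triv$ built by integrating parallel pseudo-isotopies that use the \emph{same} chain-level splitting $S$. By the uniqueness (up to $L_\infty$ homotopy) of Maurer--Cartan transfers between quasi-isomorphic DGLAs, any two such morphisms agreeing on the $E^1$-page of the $u$-filtration must be $L_\infty$ homotopic; the $E^1$ agreement is immediate because both reduce to the identity in the associated graded. Alternatively, and more concretely, one expands each composition as a graph sum and observes that the two expansions differ only by graphs in which the distinguished input leg factors through a string of $\iota$-edges, and these graphs can be organized into a telescoping sum that manifestly defines the required $L_\infty$ homotopy.
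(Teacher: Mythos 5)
Your treatment of the first assertion is essentially the paper's own proof: the same pseudo-isotopy on $\hh_A\otimes\Omega^\bullet_{[0,1]}$, the same characterizing ODE from Fukaya's integration procedure, and the same forget--contract correspondence $\mathbb{D}_m\leftarrow\mathbb{D}'_m\rightarrow\mathbb{D}_{m-1}$, with the key point that the recursion $\wt(\GG)=\frac{1}{|E_G^{\sf non-loop}|}\sum_{e\in T^{\sf contr}}\wt(\GG/e)$, together with $\wt(\GG)=\wt(\GG/e)$ for loop edges, is exactly what makes the groupoid pushforward reproduce the factor $|E_G|\,t^{|E_G|-1}\wt(\GG)$. (For the quasi-isomorphism claim you do not need the spectral sequence: a one-vertex PD graph has only undirected loop edges, none of which lie in a spanning tree, so $\hcK_1=\id+O(\hbar)$ is invertible on the nose.)

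The gap is in the homotopy-commutative square. Your first argument appeals to a uniqueness principle --- that two $L_\infty$ quasi-isomorphisms between quasi-isomorphic DGLAs which agree on the $E^1$-page must be $L_\infty$ homotopic --- which is false in general: if the linear parts of two morphisms into an abelian target agree, their quadratic parts differ by a chain map $\Sym^2(\h_A^+[1])\to\hh_A^\triv[1]$ whose homology class is a homotopy invariant and has no reason to vanish a priori. Your second argument (a telescoping reorganization of the two graph sums) is the right kind of statement but is not carried out; exhibiting the difference as a boundary in the $L_\infty$ sense requires actually producing the homotopy, and ``strings of $\iota$-edges'' is not enough to see this. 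The paper closes this differently: the Koszul map $\epsilon$ lifts to a strict map of pseudo-isotopies $\epsilon\colon\h_A^+\otimes\Omega^\bullet_{[0,1]}\to\hh_A\otimes\Omega^\bullet_{[0,1]}$ --- this is where the compatibility $\iota(\alpha)(F\beta)=H(\beta,\alpha)$ of Proposition~\ref{prop:homotopy2} enters, intertwining the $dt$-components $\Delta^{H^\sym}$, $\{-,-\}^{H^\sym}$ upstairs with $\Delta^{H}$, $\{-,-\}^F_\hbar$, $\{-,-\}^\delta_\hbar$ downstairs --- and then functoriality of the integration procedure with respect to maps of pseudo-isotopies (\cite[Theorem 2.7]{Tu3}) yields the homotopy-commutative square directly. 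You should either supply this lift or make your telescoping homotopy explicit.
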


\begin{proof}
The proof is similar to that of Theorem~\ref{thm:K}.  To set it up,
define a family of pre-$L_\infty$ morphisms
  $\hcK(t): \hh_A(t) \ra \hh_A^\triv$ by setting
\[ 
  \hcK_m(t) (\gamma_1\cdot
  \hbar^{g_1},\ldots,\gamma_m\cdot\hbar^{g_m}) = \sum_{(\GG,f)}
  \frac{t^{|E_G|} \wt(\GG)}{\Aut(\GG,f)} \hcK_{(\GG,f)} (\gamma_1\cdot
  \hbar^{g_1},\ldots,\gamma_m\cdot\hbar^{g_m}), \]
with the weight defined in~(\ref{subsec:weight}).  Observe that by
definition we have $\hcK_m(1)=\hcK_m$, and that $\hcK(0)$ is the
identity morphism of $\hh_A^\triv$.

There are three operators used in the earlier definition of the DGLA structure
on
\[ \hh_A \otimes \Omega_{[0,1]}^\bullet, \]
namely $\Delta^{H^\sym}$,
$\sum_{r\geq1} t^{r-1}\cdot \{-,-\}^F_r \cdot \hbar^{r-1}$, and
$\sum_{r\geq 1} t^{r-1}\cdot \{-,-\}^\delta_r \cdot \hbar^{r-1}$.
Using the bar construction for DGLAs we extend these
operators as coderivations, to obtain
\[ \widehat{\nu}_1(t),\, \widehat{\nu}_2^F(t),\, \widehat{\nu}_3^\delta(t)
  : \Sym (\hh_A[1]) \ra \Sym  (\hh_A[1]). \]
The integration procedure described in~\cite{Fuk}
produces a family of $L_\infty$ morphisms uniquely determined by a
differential equation. We shall prove that the pre-$L_\infty$ morphism
$\hcK(t)$ defined above satisfies the same
differential equation, which will prove that $\hcK(t)$ is
indeed an $L_\infty$ morphism.

This differential equation is given by
\[ \frac{d}{dt} \hcK_m(t) = \hbar \hcK_{m}(t)\widehat{\nu}_1 +
\hcK_{m-1}(t)\widehat{\nu}^F_2(t)+\hcK_{m-1}(t)\widehat{\nu}^\delta_2(t). \]
To prove that the $\hcK_m$'s satisfy the above identity, observe that
for a marked partially directed graph $(\GG,f)$, the $t$-dependence is
simply by $t^{|E_G|}$. Hence the left hand side equals to
\[ \int_{\mathbb{D}_m} |E_G|\cdot  t^{|E_G|-1}\cdot \wt\,\cdot\,
  \hcK_m \, d\mu, \]
where $\mathbb{D}_m$ denotes the groupoid of marked
partially directed graphs with $m$ vertices, and as in the proof of
Theorem~\ref{thm:K} we make use of groupoid integrals developed
in~\cite{Fio}.

Thus our goal is to show that the right hand side of the differential
equation is equal to the above groupoid integral.  We first analyze
the first term $\hbar \hcK_{m}(t)\widehat{\nu}_1$ on the right hand
side of the differential equation. According to the inductive
definition of the weights, we observe that $\wt(\GG)=\wt(\GG/e)$ if
$e$ is a loop edge. With this fact in mind, arguing the same way as in
the proof of Theorem~\ref{thm:K} we obtain
\[ \hbar \hcK_{m}(t)\widehat{\nu}_1= \int_{\mathbb{D}_m}
|E_G^{{\sf loop}}|\cdot t^{|E_G|-1}\cdot\, \wt\,\cdot\,
\hcK_m \, d\mu. \]
In the case of non-loop edges, consider the following maps between
groupoids
\[ \mathbb{D}_m \stackrel{{\sf F}}{\longleftarrow} \mathbb{D}'_m
\stackrel{{\sf C}}{\longrightarrow} \mathbb{D}_{m-1},\] where the
middle groupoid $\mathbb{D}'_m$ has objects consisting of triples
$(\GG,f,e)$ with $e\in T$ a contractible edge in the spanning tree of
$\GG$. The map ${\sf F}$ is the forgetful map $(\GG,f,e)\mapsto
(\GG,f)$, while the map ${{\sf C}}$ is the contraction map
$(\GG,f,e)\mapsto (\GG/e, f')$.

We apply the basic properties of groupoid integrals to simplify the
second and the third terms on the right hand side of
the differential equation. If we denote $\nu_2(t)$ the sum
$\big(\nu^F_2(t)+\nu^\delta_2(t)\big) $, then the second and
the third terms are given, as in the proof of Theorem~\ref{thm:K}, by~\cite{Fio} 
\begin{align*}
  \int_{\mathbb{D}_{m-1}} \wt \cdot
\hcK_{m-1}(t) \nu_2(t) d\mu & = \int_{\mathbb{D}'_m} {\sf C}^*\big( \wt \cdot
                              \hcK_{m-1}(t)\nu_2(t) \big) d\mu \\
  & = \int_{\mathbb{D}_{m}}{\sf F}_*{\sf C}^*\big( \wt\cdot
    \hcK_{m-1}(t)\nu_2(t) \big) d\mu.
\end{align*}
The vector-valued function
${\sf F}_*{\sf C}^*\big(\wt\cdot
\hcK_{m-1}(t)\nu_2(t) \big)$ is computed as
\begin{align*}
  {\sf F}_*{\sf C}^*& \big(\wt\cdot \hcK_{m-1}(t)\nu_2(t) \big)(\GG,f)  =
    \sum_{e\in T^{\sf contr}} {\sf C}^*\big( \wt \cdot
                      \hcK_{m-1}(t)\nu_2(t) \big)(\GG,f,e) \\
        & = \sum_{e\in T^{\sf contr}} \wt(\GG/e)\cdot t^{|E_G|-1} \cdot
\hcK_m(\GG,f) = |E_G^{\sf non-loop}|\cdot
\wt(\GG)\cdot t^{|E_G|-1} \cdot
\hcK_m(\GG,f),
\end{align*}
where the last equality follows from the inductive definition of
$\wt$.

The above expression and the contribution from the loop edges in
add up precisely to $|E_G|\cdot t^{|E_G|-1} \cdot \wt(\GG) \cdot
\hcK_m(\GG,f)$. This proves that $\hcK_m$'s  satisfy Fukaya's differential
equation, and thus $\hcK$ is an $L_\infty$ morphism.

For the second part of the theorem, it follows from a general fact on
maps of pseudo-isotopies~\cite[Theorem 2.7]{Tu3}. Indeed observe
that the map $\epsilon$ lifts to a map between pseudo-isotopies:
\[ \epsilon: \h^+_A \otimes \Omega_{[0,1]}^* \ra \hh_A \otimes
  \Omega_{[0,1]}^*.\]
This induces the desired homotopy commutative diagram after
integrating both sides.
\end{proof}
\section{Feynman sum formulas for CEI}
\label{sec:mainthm}

In this section we use the $L_\infty$-morphism $\hcK$ of
Theorem~\ref{thm:Khat-delta} in order to derive explicit formulas for the
categorical invariants $F_{g,n}^{A,s}$.  These formulas are given as
summations over partially directed stable graphs, with vertices
labeled by the tensors $\hbA_{g,k,l}$ defined using combinatorial
string vertices in~(\ref{subsec:defbeta}).

\subsection{\texorpdfstring{Expressing CEI in terms of
    $\hbA$}{Expressing CEI interms of hat-beta-A}}
Let $s: H\ra H_*(L_+)$ be a splitting of the Hodge filtration,
see Definition~\ref{def:splitting}.  As in the previous section we fix a
chain-level splitting $S: (L,b) \ra (L\series{u},b+uB)$ that lifts
$s$, of the form
\[S=\id+S_1u+S_2u^2+\cdots, \;\; S_j\in \End(L).\]
As in Section~\ref{sec:trivial-1} we also have its inverse operator
$R=S^{-1}$.

Denote by $L^{\sf Triv}=(L,b)$ the same underlying chain complex as
$L$, but endowed with trivial circle action. The chain map $S$ induces
an isomorphism $L^{\sf Triv}_- \ra L_-$, further inducing an
isomorphism of DGLAs which we still denote by
\[ S : \h_A^{\sf Triv} \ra \h_A^\triv.\]
Here the differential on the left hand side is just $b$ while on the
right hand side it is $b+uB$; both sides have zero Lie
bracket.  Analogously we also have an inverse isomorphism of DGLAs
\[ R: \h_A^\triv \ra \h_A^{\sf Triv}.\]
We also have the corresponding maps of Koszul resolutions
\[ \widehat{R}: \hh_A^\triv \ra \hh_A^{\sf Triv} \;\;\;\mbox{and}
  \;\;\; \widehat{S}:\hh_A^{\sf Triv} \ra \hh_A^\triv.\]
For example $\widehat{S}$ pre-composes with $R$ at inputs and
post-composes with $S$ at outputs.

Consider the following commutative diagram of DGLAs\footnote{The left
  square is only homotopy commutative by Theorem~\ref{thm:Khat-delta}.}:
\[
  \begin{CD} \h_A^+ @>\cK>> \h_A^{{\sf triv},+} @> R >> \h_A^{{\sf
Triv},+}\\ @V\biota VV @V\biota VV @V\biota VV\\ \hh_A
@>\hcK >> \hh_A^\triv @>\widehat{R}>> \hh_A^{\sf Triv}.
  \end{CD}
\]
It induces a commutative diagram of isomorphisms of the associated
Maurer-Cartan moduli spaces:
\[
  \begin{CD} \MC(\h_A^+) @>\cK_*>> \MC(\h_A^{\triv,+}) @> R_* >>
\MC(\h_A^{{\sf Triv},+})\\ @V\biota_* VV @V\biota_* VV
@V\biota_* VV\\ \MC(\hh_A) @>\hcK_*>> \MC(\hh_A^\triv)
@>\widehat{R}_*>> \MC(\hh_A^{\sf Triv}).
  \end{CD}
\]
The categorical enumerative invariant $F^{A,s}$ of
Definition~\ref{def:main} lives in the upper-right corner space
$\MC(\h_A^{{\sf Triv},+})$, while the Maurer-Cartan element $\hbA$
lives inside the lower-left corner $\MC(\hh_A)$. Our goal is to write
down a formula for $F^{A,s}$ in terms of $\hbA$, as the latter is
computable using combinatorial string vertices from
Definition~\ref{defi:vertices}.  Using Theorem~\ref{thm:Khat-delta}
it follows that
\[ \biota_* F^{A,s}=\widehat{R}_*\hcK_*\hbA.\]
This yields an explicit formula for the categorical enumerative
invariants $F^{A,s}_{g,n}$ as a Feynman sum over partially directed
stable graphs, using the formula for $\hcK$ described
in~(\ref{subsec:k-hat-symplectic}).  The precise result is summarized
in the following theorem.

\begin{Theorem}
\label{thm:main}
For any $g\geq 0$, $n\geq 1$ such that $2g-2+n>0$,
we have
\[\biota (F_{g,n}^{A,s}) = \sum_{m\geq 1} \sum_{\GG\in
    \Gamma((g,1,n-1))_m} (-1)^{m-1} \frac{\wt(\GG)}{\Aut(\GG)} \prod_{v}
  {\sf Cont} (v) \prod_{e} {\sf Cont} (e) \prod_{l} {\sf Cont} (l)\]
as homology classes in $H_*\big( L_{1,n-1}, b\big)$. In this formula
the contributions ${\sf Cont} (v)$, $ {\sf Cont} (l)$ and
$ {\sf Cont} (e)$ associated with
$\GG=(G,L_G^{\sf in}\coprod L_G^{\sf out},E^\direct,T)$ are as follows:
\begin{itemize}
\item[--] At every vertex $v$ we assign the tensor $\hbA_{g(v),k(v),l(v)}$.
\item[--] At incoming legs we assign the operator $S$ defined above; at 
outgoing legs we put the operator $R$.
\item[--] For directed edges in the spanning tree $T$ we assign the
homotopy operator $F$ from Proposition~\ref{prop:homotopy2}; for all
other directed edges we put the operator $\Theta$ from~(\ref{subsec:dgla-2}).
\item[--] For undirected edges in the spanning tree $T$ we assign the
homotopy operator $\delta$ in Lemma~\ref{lem:delta}; for all other
undirected edges we put the homotopy operator $H^{\sym}$ from
Proposition~\ref{prop:homotopy}.
\end{itemize} The total contribution $\prod_{v} {\sf Cont} (v)
\prod_{e} {\sf Cont} (e) \prod_{l} {\sf Cont} (l)$ is the composition
using the resulting sign diagram, which yields an element in
$L_{1,n-1}$. The extra sign is due to the sign in the definition of
$\{-,-\}_\hbar^F$ in~(\ref{subsec:moretriv}), since all the $\hbA$'s
are odd elements of $\hh_A$.
\end{Theorem}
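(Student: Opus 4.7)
The plan is to read off the formula from the homotopy commutative diagram displayed just before the theorem:
\[
  \begin{CD} \MC(\h_A^+) @>\cK_*>> \MC(\h_A^{\triv,+}) @> R_* >>
\MC(\h_A^{{\sf Triv},+})\\ @V\biota_* VV @V\biota_* VV
@V\biota_* VV\\ \MC(\hh_A) @>\hcK_*>> \MC(\hh_A^\triv)
@>\widehat{R}_*>> \MC(\hh_A^{\sf Triv}).
  \end{CD}
\]
By Definition~\ref{def:main} and Lemma~\ref{lem:extension}, the class $F^{A,s}$ is the preimage of the abstract potential under the trivialization data, and $\beta^A\in \h_A^+$ is the Maurer-Cartan element satisfying $\biota(\beta^A)\sim\hbA$. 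Chasing $\hbA$ through the diagram gives $\biota_\ast F^{A,s}=\widehat{R}_\ast\hcK_\ast\hbA$, so the proof reduces to unpacking each factor on the right.

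First, I would substitute the explicit Feynman sum formula for $\hcK$ from~(\ref{subsec:k-hat-symplectic}): expanding $\exp$ of the $L_\infty$-morphism applied to $\hbA=\sum\hbA_{g,k,l}\hbar^g\lambda^{2g-2+k+l}$ produces a sum over marked partially directed stable graphs whose vertices are decorated by the combinatorial tensors $\hbA_{g(v),k(v),l(v)}$, whose edges carry $H^{\sym}$, $\delta$, $F$, $\Theta$ according to the rules of~(\ref{subsec:pdgraphs}), and whose weight is $\wt(\GG)/|\Aut(\GG,f)|$. The normalization $\exp(\beta/\hbar)$ (Lemma~\ref{lem:mc-closed}) together with the symmetry over markings $f$ turns the marked sum into an unmarked sum with the usual $1/|\Aut(\GG)|$ prefactor, and produces the overall sign $(-1)^{m-1}$: $m$ copies of the odd element $\hbA$ produce $m-1$ signs after converting from the $L_\infty$ symmetric-algebra convention to the composition of maps (one factor is absorbed by the shifted-sign convention of~(\ref{subsec:signs})). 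I would verify this sign by restricting to the case $m=1$, where $\hcK_1=\exp(\hbar\Delta^{H^\sym})$ acts by attaching undirected loop edges only and the sign is trivially $+1$.

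Next I would apply $\widehat{R}_\ast$: by construction $\widehat{R}$ post-composes with $R$ at outputs and pre-composes with $S$ at inputs. Since each leg of $\GG$ is either an input or an output of the resulting element of $L_{1,n-1}$, this exactly installs the prescribed leg contributions $S$ (on incoming legs) and $R$ (on outgoing legs). Fixing the single distinguished input leg (forced on us because we land in $L_{1,n-1}=\Hom^\cont(\Sym^1 L_+[1],\Sym^{n-1}L_-)$) restricts the sum to $\GG\in\Gamma(\!(g,1,n-1)\!)_m$, as required. The final bookkeeping step is to check that the composition along $\GG$ matches the sign diagram formalism used to define $\{-,-\}_\hbar^F$ in~(\ref{subsec:moretriv}) and $\circ_r$ in~(\ref{subsec:dgla-2}); this is the standard sign-diagram argument of~\cite{She} where red lines attached to odd operators (here $F$ and $\hbA$) render all vertex neighborhoods even.

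The main obstacle is bookkeeping rather than conceptual. Two issues require care. First, establishing the extra global sign $(-1)^{m-1}$ requires tracking shifted versus unshifted degree conventions consistently through the exponential of the $L_\infty$-morphism and the Koszul resolution $\biota$; this is where the sign $(-1)^{|\alpha|}$ built into $\biota$ (see~(\ref{subsec:Koszul})) plays its role. Second, the identification of the marked-graph sum from the Fukaya integration formula with the unmarked-graph sum with weights $\wt(\GG)$ of~(\ref{subsec:weight}) has already been handled in the proof of Theorem~\ref{thm:Khat-delta}; I would simply cite that inductive contractible-edge argument rather than rerun it. With these pieces in place, the formula asserted in the theorem follows by identifying every factor of the Feynman sum on the two sides.
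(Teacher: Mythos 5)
Your proposal follows exactly the route the paper takes: the identity $\biota_* F^{A,s}=\widehat{R}_*\hcK_*\hbA$ obtained by chasing the commutative diagram of Maurer--Cartan moduli spaces (via Theorem~\ref{thm:Khat-delta}), followed by substituting the explicit graph-sum formula for $\hcK$ from~(\ref{subsec:k-hat-symplectic}) and the leg decorations coming from $\widehat{R}$. Your additional bookkeeping (the $m!$ versus markings conversion to $1/|\Aut(\GG)|$, the $(-1)^{m-1}$ sign checked at $m=1$, and the extraction of the $(1,n-1)$-component) is consistent with what the paper leaves implicit, so the proof is correct and essentially identical to the paper's.
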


\subsection{\texorpdfstring{Explicit formulas for small $g$ and
    $n$}{Explicit formulas for small g and n}}
\label{sec:formulas}

In this appendix we give explicit formulas for CEI of Euler
characteristic $\chi\geq -3$. 

The formulas will be written in terms of partially directed graphs.
Our conventions when drawing such a graph $(G,L_G^{\sf in}\coprod
L_G^{\sf out},E^{\sf dir},T)$ are as follows:
\begin{itemize}
\item[--] we shall omit the genus decoration of a vertex if it is
clear from the combinatorics of the graph;
\item[--] we shall omit the drawing of the spanning tree $T$ if there
is a unique choice of it; otherwise, the spanning tree $T$ will be
drawn in blue.
\end{itemize}

\paragraph{{\bf The formula for the $(0,1,2)$-component.}}  We begin
with the case when $g=0$ and $n=3$. In this case there is a unique
partially directed stable graph. Thus we have
\[ \biota(F^{A,s}_{0,3})= \frac{1}{2} \includegraphics[trim=-1cm 1cm
  0 0, scale=.25]{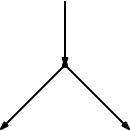}\]
The coefficient $\frac{1}{2}$ is due to the automorphism that switches
the two outputs in the stable graph.

\paragraph{{\bf The formula for the $(1,1,0)$-component.}}
\label{subsec:11}
In this case there are two stable graphs. The invariant $\biota(
F_{1,1}^{A,s})$ is given  by
\[\includegraphics[trim=-1cm 1cm 0 0, scale=.25]{png/110.png} \]

\paragraph{{\bf The formula for the $(0,1,3)$-component.}}  In this
case the formula for $\biota(F^{A,s}_{0,4})$ is
\[ \includegraphics[scale=.25]{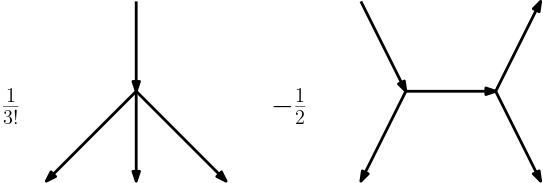} \]
The coefficient $\frac{1}{2}$ appears due to the symmetry of the two
outgoing leaves on the right hand side of the stable graph.

\paragraph{{\bf The formula for the $(1,1,1)$-component.}}
In this case we get
\begin{align*} \biota(F_{1,2}^{A,s}) &
= \begin{tikzpicture}[baseline={(current bounding
box.center)},scale=0.25] \draw [thick,directed] (3.4,4) to (3.4,2);
\node at (3.4,2) {$\bullet$}; \node at (2.4,2) {\scriptsize $g=1\qquad$}; \draw
[thick,directed] (3.4,2) to (3.4,0);
\end{tikzpicture} {}_{+\ \frac{1}{2}}\ \begin{tikzpicture}[baseline={(current
bounding box.center)},scale=0.25] \draw [thick,directed] (3.4,4) to
(3.4,2); \node at (3.4,2) {$\bullet$}; \draw [thick] (2.7,1.3) circle
[radius=1]; \draw [thick,directed] (3.4,2) to (4.8, 2);
\end{tikzpicture} \ {}_-\  \begin{tikzpicture}[baseline={(current bounding
box.center)},scale=0.25] \draw [thick,directed] (3.4,4) to (3.4,2);
\node at (3.4,2) {$\bullet$}; \draw [thick,directed] (3.4,2) to
(1.4,2); \draw [thick,directed] (3.4,2) to (7.4,2); \node at (7.4,2)
{$\bullet$}; \node at (8.4,2) {\scriptsize $\qquad g=1$};
\end{tikzpicture}\\ & {}_{-\frac{1}{2}}\  \begin{tikzpicture}[baseline={(current bounding
box.center)},scale=0.25] \draw [thick,directed] (3.4,4) to (3.4,2);
\node at (3.4,2) {$\bullet$}; \node at (3.4,-2) {$\bullet$}; \draw
[thick,blue,directed] (3.4,2) to [out=240, in=120] (3.4,-2); \draw
[thick,directed] (3.4,2) to [out=300, in=60] (3.4,-2); \draw
[thick,directed] (3.4,-2) to (3.4,-4);
\end{tikzpicture} \ {}_{-\frac{1}{2}}\ \begin{tikzpicture}[baseline={(current
bounding box.center)},scale=0.25] \draw [thick,directed] (3.4,4) to
(3.4,2); \node at (3.4,2) {$\bullet$}; \node at (3.4,-2) {$\bullet$};
\draw [thick,blue,directed] (3.4,2) to [out=240, in=120] (3.4,-2);
\draw [thick] (3.4,2) to [out=300, in=60] (3.4,-2); \draw
[thick,directed] (3.4,-2) to (3.4,-4);
\end{tikzpicture} \ {}_{-\frac{1}{2}}\ \begin{tikzpicture}[baseline={(current
bounding box.center)},scale=0.25] \draw [thick,directed] (3.4,4) to
(3.4,2); \node at (3.4,2) {$\bullet$}; \node at (3.4,-2) {$\bullet$};
\draw [thick,directed] (3.4,2) to [out=240, in=120] (3.4,-2); \draw
[thick,blue] (3.4,2) to [out=300, in=60] (3.4,-2); \draw
[thick,directed] (3.4,-2) to (3.4,-4);
\end{tikzpicture} \ {}_{-\frac{1}{2}}\ \begin{tikzpicture}[baseline={(current
bounding box.center)},scale=0.25] \draw [thick,directed] (3.4,4) to
(3.4,2); \node at (3.4,2) {$\bullet$}; \draw [thick,directed] (3.4,2)
to (5.4,2); \draw [thick,directed] (3.4,2) to (3.4,-1); \draw [thick]
(3.4,-2) circle [radius=1];
\end{tikzpicture}.
\end{align*}
Observe that in the first graph of the second line there are two
directed edges between the two vertices.  This explains how tensors
$\widehat{\beta}^A_{g,k,l}$ with $k\geq 2$ can contribute to the
categorical enumerative invariants.  In the third graph of the second
line the spanning tree has an undirected edge in which case we assign
the homotopy operator $\delta$ as in Lemma~\ref{lem:delta}. The
coefficient $-\frac{1}{2}$ in the first three graphs of the bottom
line is deduced using the inductive definition of
weights~(\ref{subsec:weight}):
\[ \wt(\GG)=\frac{1}{|E_G^{\sf non-loop}|}=\frac{1}{2}.\]
The negative sign is because there are two vertices so
$(-1)^{m-1}=-1$.

\paragraph{{\bf The formula for the $(0,1,4)$-component.}} In this
case $\biota(F_{0,5}^{A,s})$ is given by
\[ \includegraphics[scale=.3]{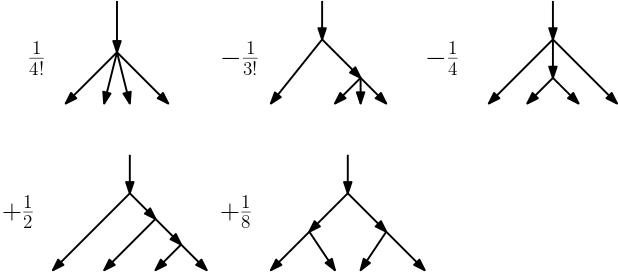}.\]
In genus zero the weights of partially directed graphs are $1$,
see~(\ref{subsec:weight}).  Hence only the automorphism
factors show up.  For example, the third graph in the bottom line has
$\wt=1$ and $|\Aut|=8$.

\paragraph{{\bf The formula for the $(1,1,2)$-component.}} In this
case $\biota(F_{1,3}^{A,s})$ is given by
\[ \includegraphics[scale=.5]{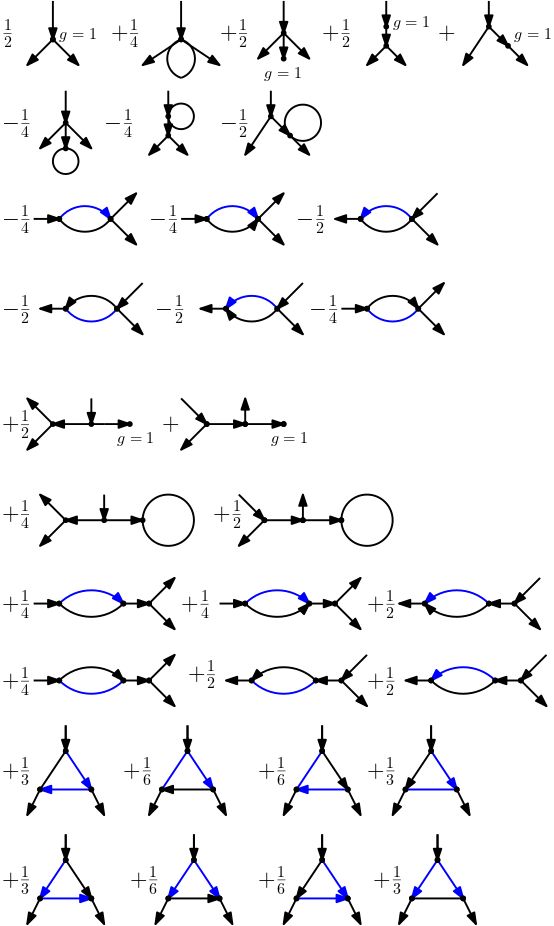}.\]
\newpage

\paragraph{{\bf The formula for the $(2,1,0)$-component.}} We also
list the graph sum for $\biota(F_{2,1}^{A,s})$:
\[\includegraphics[scale=.5]{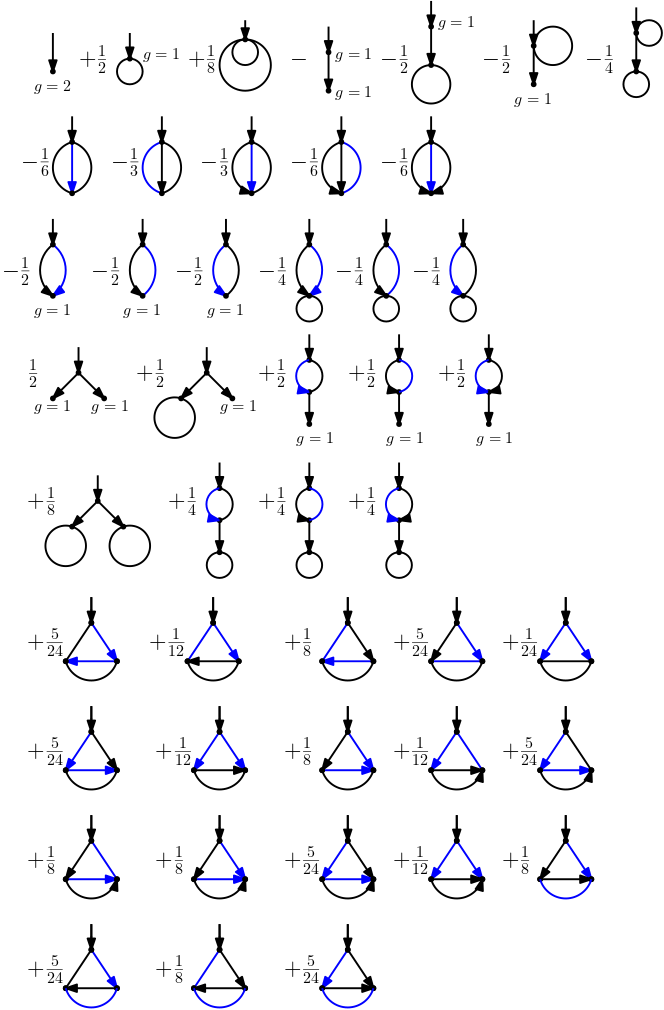}.\]
\newpage
\appendix 
\section{The integer-graded case}
\label{app:z-graded}

So far we have assumed that all our vector spaces were
$\Z/2\Z$-graded, and we did not concern ourselves with even graded
shifts (as these do not affect signs).  In this appendix we will
sketch how our results need to be modified in the $\Z$-graded case so
that all the maps involved will be of a well-defined homogeneous
degree.  This allows us to obtain an analogue of the dimension axiom
of Gromov-Witten theory in this case.

The choices we make are as follows.  The formal variables $u$ and
$\lambda$ have homological degree $-2$.  If the Calabi-Yau degree
(i.e., the homological degree of the cyclic pairing) of the algebra
$A$ is $d$, then $\hbar$ has degree $-2+2d$.  When the variable
$\hbar$ is used in the context of chains on moduli spaces of curves,
$d$ will be assumed to be zero (since the chain level operator giving
the Mukai pairing has degree zero), so $\deg\hbar = -2$ in that
setting.

The standard Mukai pairing has homological degree zero
\[ \pairing_\Muk:C_*(A) \otimes C_*(A) \ra \bbK. \]
Thus when viewed as an operator on $L = C_*(A)[d]$ the Mukai pairing
has degree $-2d$.

With the above conventions the Weyl algebra is $\Z$-graded, because the
generators of the ideal defining it are of homogeneous degree $2d-2$.
This is the reason we choose the residue pairing to take the
coefficient of $u^1$ and not $u^{-1}$.

We rewrite the definitions of $\h$ in~(\ref{subsec:dgla1}) and of
$\hh$ in~(\ref{subsec:dgla-2}) by adding even shifts:
\begin{align*}
  \h & = \Sym (L_-)\series{\hbar, \lambda}[1-2d], \\
  \hh & = \bigoplus_{k\geq 1,l} \Hom^\cont\left (\Sym^k(L_+[1-2d]),
        \Sym^lL_-\right)[2-2d]\series{\hbar, \lambda}.
\end{align*}
It is easy to check that with these gradings $\h$ and  $\hh$ are
$\Z$-graded DGLAs.

Now assume that the splitting $s$, its chain-level lift $R$, and the
inverse $T$  of $R$ all preserve degrees.  This implies that
\begin{itemize}
  \item[--] $H^{\Sym}: \Sym^2 L_- \ra \bbK$ has degree $2-2d$ in
    Proposition~\ref{prop:homotopy};
  \item[--] $\Theta: L_-\ra L_+[1-2d]$  has degree
    $2-2d$ in Proposition~\ref{prop:homotopy2};
  \item[--] $F: L_-\ra L_+[1-2d]$ has degree $3-2d$ in
    Proposition~\ref{prop:homotopy2}.
\end{itemize}
Using these facts it is easy to verify that the maps $\hcK_m$ 
have degree zero, as desired. 

We can now prove that the categorical enumerative invariants
$F_{g,n}^{A,s}$ satisfy the dimension axiom of Gromov-Witten theory
when the $A_\infty$-algebra is $\Z$-graded. More precisely we have the
following.

\begin{Theorem}
  \label{coro:dimension}
  Assume that $A$ is $\Z$-graded, of Calabi-Yau dimension
  $d$, and assume that the splitting $s$ preserves degrees. Then
  \[ \deg F^{A,s}_{g,n}= 2\big((d-3)(1-g)+n \big)\]
  as an element of $\Sym^n H_-$.
\end{Theorem}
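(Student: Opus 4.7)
The plan is to deduce the theorem by tracking homological degrees through the definition of $F^{A,s}_{g,n}$, and to pin everything to the single observation that $\cD^{A,s}$ is homogeneous of degree zero under the $\Z$-graded conventions fixed at the start of this appendix.

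First I would verify that the abstract total descendent potential $\cD^A_\abs = \exp(\beta^A/\hbar)$ is of degree zero. By the shifted-sign convention of~(\ref{subsec:signs}), the Maurer-Cartan element $\beta^A\in\h_A$ satisfies $|\beta^A|_{\h_A} = -1$. Since $\h_A = \Sym(L_-)\series{\hbar,\lambda}[1-2d]$ and $[1-2d]$ is an even shift, this translates to degree $-1-(1-2d) = 2d-2$ when $\beta^A$ is regarded as an element of the underlying algebra $\Sym(L_-)\series{\hbar,\lambda}$. Under our conventions $\deg\hbar = 2d-2$, so $\beta^A/\hbar$ is homogeneous of degree $0$, and consequently every power $(\beta^A)^k/\hbar^k$ is degree $0$; hence $\cD^A_\abs\in\wSym_\hbar L_-\series{\hbar,\lambda}$ is homogeneous of degree $0$.

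Next I would observe that $\cD^{A,s} = (\Psi^s)^{-1}(\cD^A_\abs)$ is also of degree zero. The hypothesis that $s$ preserves degrees means that $s\colon H_+\ra H_*(L_+)$ is a homogeneous map of degree $0$, and because the Mukai and residue pairings are homogeneous under our conventions it extends to a degree-zero symplectomorphism $H^\Tate\ra H_*(L^\Tate)$. The induced Weyl-algebra isomorphism $\Phi^s$ and the composition $\Psi^s$ built from $\Phi^s$ with the canonical inclusion $i$ and projection $p$ are therefore both degree-zero maps. It follows that $F^{A,s} = \ln\cD^{A,s}\in\wSym H_-\series{\hbar,\lambda}$ is homogeneous of degree zero.

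Finally I would read off the dimension axiom from the generating-function expansion
\[ F^{A,s} \;=\; \sum_{g,n} F^{A,s}_{g,n}\,\hbar^{g-1}\lambda^{2g-2+n}. \]
The map $(g,n)\mapsto(g-1,\,2g-2+n)$ is injective on the index set, so the coefficient $F^{A,s}_{g,n}$ is unambiguously determined; requiring the $(g,n)$-summand to be homogeneous of degree zero yields
\[ \deg F^{A,s}_{g,n} + (g-1)(2d-2) + (2g-2+n)(-2) = 0, \]
which rearranges to $\deg F^{A,s}_{g,n} = (2d-6)(1-g) + 2n = 2\bigl((d-3)(1-g)+n\bigr)$.

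The only step carrying real content is the first one: justifying that, under the $\Z$-graded setup of this appendix, $\beta^A$ indeed has $\h_A$-degree $-1$. This is tantamount to running the degree analysis that the appendix sketches when it asserts that $\hcK_m$ has degree zero: one must check that the topological degree of the combinatorial string vertex $\hcV^\comb_{g,k,l}$ on $M^\fr_{g,k,l}$, the twist by the local system $\usgn$ (with its $[-k]$ shift), and the TCFT shift $[d(2-2g-2k)]$ all combine with the powers of $\hbar$ and $\lambda$ to place $\hbA\in\hh_A$ in degree $-1$; then $|\beta^A| = -1$ in $\h_A$ follows from the fact that the Koszul quasi-isomorphism $\biota\colon\h_A^+\ra\hh_A$ is degree-preserving. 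I expect this bookkeeping to be the main (though ultimately routine) obstacle, given how many shifts must be reconciled simultaneously.
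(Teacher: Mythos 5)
Your proposal is correct, and it rests on the same principle as the paper's own proof: every object produced by the construction is a Maurer--Cartan element of unshifted degree $-1$ (equivalently a degree-zero potential after dividing by $\hbar$), every map in sight preserves degree, and the degree of $F^{A,s}_{g,n}$ is then forced by the powers of $\hbar$ and $\lambda$ attached to it. The difference is in which incarnation of the definition you track: you go through Definition~\ref{def:main}, showing $\cD^A_\abs=\exp(\beta^A/\hbar)$ and $\Psi^s$ are of degree zero and reading the answer off the $\hbar^{g-1}\lambda^{2g-2+n}$ expansion of $\ln\cD^{A,s}$, whereas the paper goes through the constructive description of Section~\ref{sec:mainthm}: it computes $\deg\hcV_{g,k,l}=6g-7+3k+2l$ from the Maurer--Cartan property of $\hcV$ in $\hg$, adds the TCFT shift $d(2-2g-2k)$ of Theorem~\ref{thm:tcft} to get the degree of $\hbA_{g,k,l}$, checks that $\hbA$ is a degree $-1$ Maurer--Cartan element of the $\Z$-graded $\hh_A$, and uses that $\hcK$ and $\widehat{R}$ have degree zero before reading off degrees from the $\hbar^{g}\lambda^{2g-2+n}$ expansion of the degree $-1$ element $F^{A,s}$. (The two normalizations, $\hbar^{g-1}$ with degree $0$ versus $\hbar^{g}$ with degree $-1$, differ by one power of $\hbar$ of degree $2d-2$ and give the same answer, as your arithmetic confirms; your version is the one literally consistent with Definition~\ref{def:main}.) The one caveat is that the single step carrying real content --- verifying that $\hbA$, hence $\beta^A$, is actually homogeneous of degree $-1$ once the shifts coming from $\usgn$, from $[1-2d]$ and $[2-2d]$, and from $d(2-2g-2k)$ are all reconciled --- is precisely the step you defer. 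You correctly identify it, and correctly note that degree-preservation of $\biota$ transports it from $\hh_A$ to $\h_A$, but the proof is not complete until that inspection is carried out as the paper does.
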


\begin{proof}
  The element
  \[ \hcV = \sum_{g, k\geq 1, l} \hcV_{g,k,l}\,\hbar^g\,\lambda^{2g-2+k+l} \]
  is a solution of the Maurer-Cartan equation in $\hg$, so in
  particular its (unshifted) degree is $-1$.  Thus the degree of its component
  $\hcV_{g,k,l}$ of genus $g$, $k$ inputs and $l$ outputs is
  \[ \deg \hcV_{g,k,l} = 6g-7+3k+2l. \]
  Its image under $\rho^A$, as an element of $\Hom\left (\Sym^k(L_+[1]),
  \Sym^l(L_-)\right )$ will have degree 
  \[ \deg \hbA_{g,k,l} = (6g-7+3k+2l)+d(2-2g-2k). \]
  See Theorem~\ref{thm:tcft} for the provenance of the second term.
  
  A simple inspection shows that in the $\Z$-graded version of $\hh$
  (with the further shift by $2-2d+2dk-k$ in its definition, and with
  $\hbar$ of degree $2d-2$) the element
  $\hbA_{g,k,l}\,\hbar^g \,\lambda^{2g-2+k+l}$ has degree $-1$.  We
  conclude that $\hbA$ is a Maurer-Cartan element in the $\Z$-graded
  DGLA $\hh$.

  The splitting $s$ is degree preserving, so we can find a chain-level
  lift of it $R$ that also preserves degrees.  This implies that the
  inverse $T$ of $R$ also preserves degrees, and hence the $L_\infty$
  morphism $\hcK$ has degree zero.
  
  Therefore
  \[ F^{A,s} = \sum_{g,n} F^{A,s}_{g,n} \,\hbar^{g}\,\lambda^{2g-2+n},\]
  as a Maurer-Cartan element of
  $\h^{\sf Triv}=\Sym H_-\series{\hbar, \lambda}[1-2d]$, also has
  unshifted degree $-1$.  When we consider $F^{A,s}_{g,n}$ as an
  element of $\Sym^n H_-$ its degree is given by
  \begin{align*}
    \deg(F^{A,s}_{g,n}) &= (-1)+(2d-1)-\deg(\hbar)\cdot g
                          -\deg(\lambda)\cdot (2g-2+n) \\
                        &= (2d-2)-(2d-2)g+2(2g-2+n) = 2\big
                          ((d-3)(1-g)+n \big).
  \end{align*}
  This indeed matches with the virtual dimension formula in
  Gromov-Witten theory. 
\end{proof}

\end{document}